\newcommand{\bigperp}{%
	\mathop{\mathpalette\bigp@rp\relax}%
	\displaylimits
}
\newcommand{\bigp@rp}[2]{%
	\vcenter{
		\m@th\hbox{\scalebox{\ifx#1\displaystyle2.1\else1.5\fi}{$#1\perp$}}
	}%
}
\newcommand{\ol}{\overline}
\newcommand{\ad}{\text{ad}}
\def\rr{\mathbb{R}}
\definecolor{mydarkred}{RGB}{180, 10, 10}
\def\bel{\begin{equation}\label}
	\def\eeq{\end{equation}}
\def\ds{\displaystyle}
\def\U{\Gamma}
\numberwithin{equation}{section}
\title{Higher order necessary conditions for optimal controls not ranging in the interior}
\author{Francesca Angrisani\footnote{Sorbonne, Jacques Louis Lions Laboratory (LJLL), Paris, France. (\email{francesca.angrisani@sorbonne-universite.fr}).} \,\,and\,\, Franco Rampazzo\footnote{Department of Mathematics T. Levi-Civita, University of Padova, Italy. (\email{rampazzo@math.unipd.it}).}}
\begin{document} 
	\nolinenumbers
	\maketitle

	\normalsize

	\noindent
	
	\nolinenumbers
	
\begin{abstract}  Goh's and Legendre-Clebsch necessary conditions for optimal controls of affine-control systems are usually established under the hypothesis that the minimizing control lies in the interior of the control set $U$. In this paper we investigate the possibility of establishing  Goh's and Legendre-Clebsch necessary conditions without  this assumption, so that even control sets with empty interiors or optimal controls touching  the boundary of $U$  can be taken into consideration.
\end{abstract}

	\section{Introduction}
	
Besides the classical, first order,  Maximum Principle, higher order necessary conditions  are classically considered for optimal controls of ordinary differential equations. In particular, when the differential system is control-affine and the minimum problem is of the form 
		$$\qquad\left.
	\left.
	\begin{array}{l}
	 \ds	
		\min\Psi(x(T))\\ 
		%_{u\in\mathcal{U}} 

			\displaystyle \frac{dx}{dt}=f(x(t))+\sum\limits_{i=1}^m g_i(x(t))u^i(t), \quad  a.e. \,\,\, t \in [0,T],\\
			\displaystyle x(0) =\hat{x},\qquad\qquad x(T) \in \mathfrak{T}.
 \end{array}\right.\right.$$ 
---where the vector fields $f, g_i$ and the cost $\Psi$ are sufficiently regular, the target $\mathfrak{T}$ is any subset of the state space, and the controls take values in a subset $U\subset\rr^m$, $m\geq 1$---	Goh conditions and  Legendre-Clebsch conditions for an optimal control-trajectory pair $(\ol{u},\ol{x})$ read
\bel{introGLC}
\begin{array}{l}
\ds	{ (a)} \quad p(t)\cdot [g_i,g_j](\ol{x}(t)) =0  \quad\forall 1\leq i<j\leq m\qquad\text{and}\\\\
\ds	{ (b)} \quad p(t)\cdot [f,g_i](\ol{x}(t)) =0  \quad\forall 1\leq i\leq m\qquad \forall t\in[0,T],
	\end{array}
\eeq
	respectively, where  $p(\cdot)$ is the same  adjoint map  whose existence is postulated in  the first order Maximum Principle.  Furthermore, in the special case when $m=1$ (and $g:=g_1$),  they are complemented by the third order
	Legendre-Clebsch condition
	\bel{introLC3}
	0 \geq p(t) \cdot [g,[f,g]](\ol{x}(t))  \qquad \forall t\in[0,T]
	\eeq
	% In conditions \eqref{introGLC} and \eqref{introLC3}   
	 Let us remark that  the  crucial hypothesis (S) below is made on the optimal control $\ol{u}$ in order to obtain the higher order conditions \eqref{introGLC} and \eqref{introLC3}:\\
	 
	  \noindent {(S)}\hspace{2truecm}\hspace{1truecm} {\it   $\ol{u}(t)\in int(U)$ for almost every $t\in [0,T]$}, \\
	  
	  \noindent where  $int(U)$ denotes the  interior of $U$. An optimal control verifying  hypothesis (S) is often called {\it singular}, and the reason why one assumes  the singularity hypothesis (S) consists in the following fact: on a left neighbourhood of almost any time $t$, the primitive  of a singular  control can be perturbed by means of suitable infinitesimal continuous paths having the effect of producing infinitesimal variations of the corresponding trajectory $\ol{x}$ in the directions of the Lie brackets involved in \eqref{introGLC} and \eqref{introLC3}.  
	  
	  The present paper is an attempt to obtain  higher order optimality conditions of type \eqref{introGLC} and \eqref{introLC3} in situations where the singularity assumption (S) is {\it not} verified. In order to replace it, for a given control $u:[0,T]\to U$, a time $t\in ]0,T[$, and $i\in \{1,\dots,m\}$, we introduce the notion stating that {\it $u$ is $i$-{balanced}  at $t$}:  this means that there must exist two positive numbers $\alpha^i, \beta^i$ such that $\{u(s)+ \alpha^i\mathbf{e}_i, u(s)- \beta^i\mathbf{e}_i, \}\subset U$, for any $s$ in a left  neighbourhood of $t$. In the case of Goh conditions \eqref{introGLC}-{(a)},  for  a given pair   $(i,j)$, $0\leq i<j\leq m$,  we replace (S) with  weaker hypothesis stating that the control $\ol{u}$ has to be both $i$-balanced and $j$-balanced at almost every $t\in [0,1]$: namely, there must exist four positive numbers  $\alpha^i, \alpha^j, \beta^i, \beta^j$ such that  $\{u(s)+ \alpha^i\mathbf{e}_i, u(s)+ \alpha^j\mathbf{e}_j, u(s)- \beta^i\mathbf{e}_i,  u(s)-\beta^j\mathbf{e}_j \}\subset U$ for any $s$ in a left neighbourhood of $t$. Similarly, in the case of  Legendre-Clebsch  conditions \eqref{introGLC}-{(b)} and \eqref{introLC3},  for a given $i$ we assume that $\ol{u}$ has to be {\it $i$-balanced}. Notice that under our weakened hypotheses  Goh conditions \eqref{introGLC} [resp.  Legendre-Clebsch conditions \eqref{introLC3}] might be valid only for certain pairs $(i,j)$ [resp. for certain $i$], while failing for the other pair of indexes [resp. the other indexes].

The paper is organized as follows. In Section \ref{main th sect} we state the problem and present the main result (Theorem \ref{TeoremaPrincipale}). Moreover we give some instances of controls that are  {$(i,j)$-fit} or { $(i)$-fit} but are not in the interior of $U$. Additionally, 
in a toy example (see Example \eqref{ESEMPIO})  our  higher order conditions allow to establish, in a  case where the  control set has  empty interior, that a certain control map  verifying the first order, Pontryagin Maximum Principle is in fact not optimal.  In Section \ref{Lie gen} we construct variations of a process  $(\ol{u},\ol{x})$ in the direction of specific Lie brackets under   {$(i,j)$-fit} or { $(i)$-fit} hypotheses. Section \ref{approx.sect}, which deals with approximation of trajectories through products of  exponential maps corresponding to Lie brackets,    is not at all original and is put there  with  the only purpose of adding self-contained character to the paper. Section \ref{proof3.6} is devoted to the proof of Theorem \ref{eccolestime}, which concerns  infinitesimal variations.  In Section \ref{setsepsec} we exploit set-separation issues to deduce the proof of Theorem \ref{TeoremaPrincipale}. A remark    concerning a possible generalization of the presented result to the case of less smooth control systems concludes the paper.
	  
\subsection{Remarks on the notation}	 
 \,\,
 
{\bf Lie brackets}	The Lie bracket $[X,Y]$ (on a differential manifold) of two vector fields $X,Y$ at a point $x$,  is defined as $x\mapsto[X,Y](x):=DY(x)\cdot X(x)-DX(x)\cdot Y(x)$. This is a definition given in coordinates, but $[X,Y]$ is in fact a vector field, so in particular it is an intrinsic object.

{	{\bf Differential equations.} If we have uniqueness of the solution  solution $x(\cdot)$ on some interval $[0,T]$  to a Cauchy problem  $\dot x(t) = F(t,x(t)),\,x(0)=\tilde x$,}
%	in   standard notation,  
% (the latter being  written $\dot x(t) =  x(t)F(t) , x(0)=\tilde x$ in AGF),
  we will use the exponential notation $ x(t) =e^{\int_0^t F}(\tilde x)$.
	In the particular case of an autonomous vector field $F = F(x)$,  we will use the notation $ x(t) = e^{tF}(\tilde x)$. 

	{\bf Areas.} In order to recognize a  certain geometrical meaning in some coefficients appearing in exponential maps of Lie brackets (see Sect. \ref{Lie gen}),
		for any   plane,  closed,  curve $(C^1,C^2):[a,b] \to \rr^2 $ of class $W^{1,2}$, we  define  ---according to  Green's Theorem--- the {\rm signed area  ${Area}(C^1,C^2)$ of the region {\it encircled} by $(C^1,C^2)$}  as the quantity
		\bel{area}
		{Area}(C^1,C^2) : = \left\langle C^1, \frac{dC_2}{ds}\right\rangle_{L^1}=\bigintss_{a}^{b} C^1(s)\frac{dC_2}{ds}(s) ds.
		\eeq
		 
		%In particular, if in the previous Proposition the curve $(A^i,A^j)$ is closed, then  $$A^{i,j}(T)=Area(A^i,A^j)$$.
		The $Area$ operator is invariant for positive reparameterization of the curve. Namely,
		if $\phi:[\alpha,\beta]\to[a,b]$ is a $W^{1,1}$ increasing  map such that $\phi(\alpha)=a, \phi(\beta) = b$, and we set $(\hat C_1, \hat C_2):=( C_1,C_2)\circ\phi$, then ($(\hat C_1, \hat C_2)$ is a closed curve,with $(\hat C_1, \hat C_2)([a,b])= (C_1, C_2)([\alpha,\beta])$, and) one has
		$
		{Area}(\hat C_1, \hat C_2) = {Area}(C_1, C_2).
		$

		\section{The main result}\label{main th sect} We  will be concerned with the optimal control problem
					$$(P)\qquad\left\{
		\left.
		\begin{array}{l}
			\quad \ds	
			\min\Psi(x(T)),\\ \\
			%_{u\in\mathcal{U}} 
			
			\begin{cases}
				
				\displaystyle \frac{dx}{dt}=f(x(t))+\sum\limits_{i=1}^m g_i(x(t))u^i(t), \quad  a.e. \,\,\, t \in [0,T],\\
				\displaystyle x(0) =\hat{x},\qquad x(T) \in \mathfrak{T}.
		\end{cases} \end{array}\right.\right.$$ 
		{ 		where, for some integers $n,m$, \begin{itemize}
				\item  the state $x$ takes values in $\mathbb R^n$;
			\item 	the vector fields $f,g_1,\ldots,g_m$ as well as the {\it cost} $\Psi$ are assumed to be of class $C^{1}$;
				\item the control  $u$  takes values in  a  subset  $U\subseteq\mathbb{R}^m$ (possibly with empty interior);
				\item $\mathfrak{T}$ is a subset of $\mathbb{R}^{n}$, called the  {\it target};
				\item  by {\it  process}  we mean  a  pair  $(u,x)$
				such that $\displaystyle u\in L^1([0,T],U)$
				%\mathcal{U}:=\bigcup_{T>0}\left(\{T\}\times
				and 
				$x\in W^{1,1}([0,T],\mathbb{R}^n)$ is the corresponding  (Carath\'eodory)
%				\footnote{A Carath\'eodory solution of and o.d.e. $\dot x(t)=f(t,x(t))$ on an interval $I$ is a a map $\varphi\in W^{1,1}([0,T],\mathbb{R}^n)$ such that $\dot \varphi(t)=f(t,\varphi(t))$ for almost every $t\in I$.  }
				 solution of the above Cauchy problem; 
				 \item a  process  $(u,x)$ is  called   {\it feasible}  as soon as $x(T)\in \mathfrak{T}$;
				\item 	the  minimization is performed over the set of the {\it feasible processes} $(u,x)$.
%				\item we say that  a  process  $(u,x)$ is {\it singular}  if $u(t)$ belongs to the interior of the convexified $co (U)$ for every $t\in [0,T]$.
			\end{itemize}
		
		\begin{definition} A { feasible process} $(\hat u,\hat x)$ is called a local  weak minimizer of problem {\rm (P)} if 
		there exists an $L^1$ neighbourhood $\mathcal U$  of $\hat u$ such that 
	$$
\Psi(\hat x(T))\leq \Psi( x(T))
$$ 
for all $x(\cdot) \in  W^{1,1}([0,T],\mathbb{R}^n)$ such that $(x,u)$ is a feasible  process with $u \in \mathcal U$. 

 A { feasible process} $(\hat u,\hat x)$ is called a local  strong  minimizer of problem (P) if 
there exists an $C^0$ neighbourhood $\mathcal V_{\mathcal C}$  of $\hat x$ such that 
$$
\Psi(\hat x(T))\leq \Psi( x(T))
$$ 
for all  feasible  processes $(x,u)$ such that  $x \in \mathcal V_{\mathcal C}$.   

 \end{definition}
			Clearly any local  strong  minimizer  is also a local  weak  minimizer. 
	 
%\begin{remark} If a process  $(u,x)$ is singular,  than the {\it tube}
%	$$
%	\bigcup_{t\in [0,T]} \Big\{u(t)\pm \mathbf{e}_i,\,\,\,\, i=1,\ldots m\Big\} \subseteq U
%	$$
%	\end{remark}
%	 }	 
% 
%			\subsection{The Sussmann's formula}
%	\begin{theorem}
%		Let as consider a problem of the family $(P)$. Let  $T$ the final time. \\ 
%		Let us choose $X=(f_{1},\cdots, f_{m})$.
%		 Let $\mathfrak{B}$ be a P. Hall basis of $L(X)$.\\
%		Let us consider a feasible process $(u,x)$. \\ Let $\varepsilon>0$, ${\{{u_{\varepsilon}}\}}_{\varepsilon}$ a set of variations of the control ${u}$ and ${x}_{\varepsilon}$ the perturbed trajectory corresponding to $u_{\varepsilon}$. 
%		 Then: \\ 
%		\begin{equation}  
%			x_{\varepsilon} (T)= \prod_{B\in \mathfrak{B}} e^{C_{B}(u_{\varepsilon})(T) B}
%			\end{equation}
%	where \begin{equation}  C_B(u)(t) = \int_{0}^{t} c_{B}(u(s)) ds 
%	\end{equation} and \begin{equation}  c_B(u)(s)= \frac{1}{m!} C^m_{B_j} c_{B_l}      
%\end{equation} and $m$ is the length of $[B_j,B_l]$
%and if $B=f_i$ then we pose $c_{B}(u)=u_i$. \\
%		\end{theorem}	

\vskip1truecm

In order to state Lie brackets-including higher order conditions  for minima,    we need the notion of $i$-balanced control $u$ at a time $t$.
	Let $\mathbf{e}_0,\ldots,\mathbf{e}_m$  be the canonical basis of $\mathbb{R}\times \mathbb{R}^m$.

%{}\begin{definition}\label{fit1}   Let $j,i\in\{1,\dots m\}, j\neq i$ $t\in]0,T[$. A control  $u:[0,T]\to U$ is called { \bf $(j,i)$-Goh-fit at $t$} if there exists  $\delta\in]0,\min\{t,T-t\}[$ and  $a>0$ such that  $\{u(s) \pm a\mathbf{e}_i, u(s) \pm a\mathbf{e}_j, \}\subset U$ for almost every $s\in ]t-\delta,t[$.   Furthermore, a  control  $u:[0,T]\to U$ is called { \bf $(j,i)$-Goh-fit almost everywhere} if $u$  is \rm $(j,i)$-Goh-fit at almost every $t\in]0,T[$ \end{definition}
%\begin{definition}\label{fit1}   Let $i,j\in\{1,\dots m\}, i<j$ $t\in]0,T]$. A control  $u:[0,T]\to U$ is called { \bf $(i,j)$-Goh-fit at $t$} if there exists  $\delta\in]0,\min\{t,T-t\}[$ and  $\alpha, \beta, \alpha_3, \alpha_4>0$ such that  $$\{u(s)+ \alpha\mathbf{e}_i, u(s)+ \beta\mathbf{e}_j, u(s)- \alpha_3\mathbf{e}_i,  u(s)-\alpha_4\mathbf{e}_j \}\subset U$$ for almost every $s\in ]t-\delta,t[$.   Furthermore, a  control  $u:[0,T]\to U$ is called { \bf $(i,j)$-Goh-fit a.e.\footnote{a.e. stands for \it almost everywhere} if $u$  is \rm $(j,i)$-Goh-fit at almost every $t\in]0,T[$} \end{definition}
%
%	
%\begin{definition}\label{fit2}   If  $i\in\{1,\dots m\}$, a  control $u$ is called { \bf $i$-Legendre-Clebsch-fit at $t$} if  there exists  $\delta\in]0,\min\{t,T-t\}[$ and  $\beta_1, \beta_2>0$ such that  $$\{u(s)+\beta_1\mathbf{e}_i, u(s)-\beta_2\mathbf{e}_i   \}\subset U$$ for almost every $s\in ]t-\delta,t[$. Furthermore, a  control  $u:[0,T]\to U$ is called { \bf $i$-Legendre-Clebsch-fit a.e.} if $u$  is $i$-Legendre-Clebsch-fit  at almost every $t\in]0,T[$ \end{definition}

 \begin{definition}\label{fit2}   If  $r\in\{1,\dots m\}$, $ t\in ]0,T[$, a  control  $u:[0,T]\to U$ is called { \bf $r$-balanced at $t$} if  there exists  $\delta\in]0,\min\{t,T-t\}[$ and  $\alpha^r, \beta^r>0$ such that  $$\{u(s)+\alpha^r\mathbf{e}_r, u(s)-\beta^r\mathbf{e}_r   \}\subset U\qquad \text{for a.e.} \,\,s\in ]t-\delta,t[.$$ Furthermore, a  control  $u:[0,T]\to U$ is called { \bf $r$-balanced  a.e.} if there exists a full-measure subset $\Lambda^r_{u}\subseteq ]0,T[$ such  $u$  is $r$-balanced  at all {$t \in \Lambda^r_{u}$}. \end{definition}

	\begin{theorem}[{A higher order  maximum principle}] \label{TeoremaPrincipale}
		Let $(\ol{u},\ol{x})$ be a  local  weak minimizer  of problem {\rm (P)}, and let us set 
 $$H(x,p,u):=p\cdot\Big(f(x)+\sum\limits_{i=1}^m g_i(x)u^i\Big).$$ Furthermore, let $C$ be the Boltyanski approximating cone to the target ${\mathfrak{T}}$ at $\ol{x}(T)$.
 
		Then there exist multipliers $(p,\lambda) \in AC\Big([0,T],(\mathbb{R}^n)^*\Big) \times \mathbb{R^*} $, with $\lambda\geq 0$
%		,
%		 and a zero-measure subset $\mathcal N$ 
		  such that the following properties are satisfied:
		\begin{itemize}	
		\item[\rm\bf i)]{\sc(non-triviality)}
		$$(p,\lambda)\neq (0,0)	$$
			
		\item[\rm\bf ii)]{\sc(Adjoint equation)} $$\frac{dp}{dt} = -  \frac{\partial H}{\partial x} (\ol{x}(t),p(t),\ol{u}(t))\qquad for \,\,a.e.\quad t\in [0,T] \,\,\,  $$ 
%		{\textcolor{blue}{forse meglio menzionare N...ed anche i punti piu sotto...senno perche lo mettiamo in mezzo}}
		\item[\rm\bf iii)]{\sc(non-transversality)} $$p({T})\in -\lambda   \frac{\partial\Psi}{\partial x}(\ol{x}({T}))-C^\perp  \,\,$$ 
			\item[\rm\bf iv)]{\sc(Maximum Principle)}\\  $$\max_{u\in U} H(x(t),p(t),u) =  H(x(t),p(t),\ol{u}(t))\qquad  for\,\, a.e.\quad t\in [0,T] , \,\,\,\forall u\in U $$
		\end{itemize}
Moreover 	$(p,\lambda)$ can be chosen so that the following further three properties are verified: 	\begin{itemize}
			\item[\rm\bf v)]{$(i,j)$-\sc(Goh condition)} { If  $i,j\in\{1,\dots m\}, i < j$, and the control  $\ol{u}$ is both $i$-balanced and $j$-balanced a.e., then }\\ 
			\bel{gohij}{0 = p(t)\,\cdot [g_i,g_j](\ol{x}(t))  \qquad  \forall t\in[0,T] }
			%\qquad a.e.\quad t\in [0,T]	
			\eeq 
%			provided  $i,j\in\{1,\dots m\}, i < j$, and  the control  $\ol{u}$ is {  $(i,j)$-Goh-fit} a.e.;{\gnu   the control  $\ol{u}$ is both $i$-balanced and $j$-balanced a.e.}
			\item[\rm\bf vi)]{$i$-\sc(Legendre\textendash{}Clebsch  condition of step $2$)}  { If  $i\in\{1,\dots m\}$ and the control  $\ol{u}$ is  $i$-balanced a.e., then }\\ 	\bel{goh12}
			{0 =  p(t)\,\cdot [f,g_i](\ol{x}(t))  \qquad \forall t\in[0,T]} 
			\qquad \footnote{Because of the antisimmetry of the Lie bracket, \eqref{gohij} and \eqref{goh12} are equivalent  to $0 = p(t)\,\cdot [g_j,g_i](\ol{x}(t))$ and  $0=p(t)\,\cdot [g_i,f](\ol{x}(t)) $, respectively.}
			\eeq
%			provided  $i\in\{1,\dots m\},$ and  the control $\ol{u}$ is $i$-{ Legendre-Clebsch-fit} a.e.;
			\item[\rm\bf vii)]{\sc(Legendre\textendash{}Clebsch condition of step $3$ with $m=1$)}  {If $m=1$, $f,g:=g_1$ are of class $C^{2}$ around $\ol{x}([0,T])$, and the control 	$\ol{u}$  is  $1$-balanced, then }\\ 
		\bel{LC32}
		{0\ge  p(t)\,\cdot [g,[f,g]](\ol{x}(t))  \qquad \forall t\in[0,T] }
		\eeq	%				 If $f,g_1 \in C^{1,1}(\mathbb{R}^n,\mathbb{R})$, 
%			provided  $m=1$, $f,g:=g_1$ are of class $C^{2}$ around $\ol{x}([0,T])$, and the control 	$\ol{u}$ is $1$-{Legendre-Clebsch-fit}.
			\end{itemize}
		\end{theorem}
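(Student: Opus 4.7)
The strategy reduces the whole statement to a single separation between an approximating cone $\mathcal K\subset\rr^n$ of reachable endpoint variations from $\ol x(T)$ and the ``improving'' set $\{v\in C : \frac{\partial\Psi}{\partial x}(\ol x(T))\cdot v<0\}$, which must be empty by the minimality of $(\ol u,\ol x)$. A Boltyanski-type separation theorem then yields a pair $(p(T),\lambda)\ne(0,0)$ with $\lambda\ge 0$ such that $p(T)\cdot v\le 0$ on $\mathcal K$ and $p(T)+\lambda\frac{\partial\Psi}{\partial x}(\ol x(T))\in -C^\perp$; backward propagation of $p(T)$ along the variational equation defines $p(\cdot)$ and yields (i)-(iii). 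Inserting the classical needle directions (obtained by replacing $\ol u$ by a constant value $u\in U$ on a vanishing left neighbourhood of a Lebesgue point of $\ol u$) into $\mathcal K$ yields (iv).

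The new ingredient is the enlargement of $\mathcal K$ by Lie-bracket directions under the $r$-balanced hypotheses. Theorem~\ref{eccolestime} of Section~\ref{Lie gen} will provide, for a generic time $t$ at which $\ol u$ is $r$-balanced, families $u_\varepsilon\in L^1([0,T],U)$ agreeing with $\ol u$ outside a left neighbourhood of $t$ whose trajectories $x_\varepsilon$ admit an asymptotic expansion
\[
x_\varepsilon(T)=\ol x(T)+\varepsilon\,M(T,t)\cdot V(t)+o(\varepsilon),
\]
with $M(T,t)$ the fundamental solution of the variational equation along $\ol x$ and $V(t)$ chosen among $\pm[g_i,g_j](\ol x(t))$ in setting (v), among $\pm[f,g_i](\ol x(t))$ in setting (vi), and among $-[g,[f,g]](\ol x(t))$ in setting (vii). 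The construction uses only the admissible displacements $+\alpha^r\mathbf{e}_r$ and $-\beta^r\mathbf{e}_r$ granted by $r$-balance, distributed in a finite alternating pattern on subintervals whose total length vanishes with $\varepsilon$; the primitive of $u_\varepsilon-\ol u$ then traces a closed planar curve whose signed area (cf.~\eqref{area}) is precisely the coefficient of the targeted Lie bracket in the Baker-Campbell-Hausdorff expansion of the product-of-exponentials representation of $x_\varepsilon$ recalled in Section~\ref{approx.sect}. The remainder estimates that validate this expansion occupy Section~\ref{proof3.6}.

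With $\mathcal K$ thus enlarged, Section~\ref{setsepsec} will apply the separation theorem once more to obtain a single $(p,\lambda)$ satisfying (i)-(vii) simultaneously. For (v) and (vi) both signs of the targeted Lie bracket belong to $\mathcal K$ (the enclosing curve admits both orientations), so the separating inequality forces the scalar product to vanish along $\ol x$; for (vii) only the sign $-[g,[f,g]]$ is realised inside $\mathcal K$, reflecting the odd order of the expansion, and this yields the one-sided inequality.

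The main obstacle will be the construction behind Theorem~\ref{eccolestime}. Because the admissible increments $\pm\alpha^r\mathbf{e}_r$, $\pm\beta^r\mathbf{e}_r$ are fixed by the local geometry of $U$ around $\ol u$ and cannot be rescaled, achieving an infinitesimal first-order displacement of $\ol x$ aligned with a prescribed Lie bracket requires packing these finite-amplitude jumps on subintervals of vanishing length and calibrating the pattern so that the low-order Baker-Campbell-Hausdorff contributions cancel, the targeted bracket appears with the prescribed sign, and the remainder is genuinely $o(\varepsilon)$ in $C^0$ under the bare $C^1$ (respectively $C^2$ for (vii)) regularity of the vector fields. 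The signed-area interpretation of the leading coefficient, together with the invariance of \eqref{area} under positive reparameterisation of the curve, is what will make the calibration geometrically transparent and the error analysis tractable.
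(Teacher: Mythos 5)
Your proposal follows essentially the paper's own strategy: build bracket-direction variations from the admissible increments $+\alpha^r\mathbf{e}_r$ and $-\beta^r\mathbf{e}_r$ granted by $r$-balance (Section~\ref{Lie gen}), certify the required cancellations of the low-order coefficients in the product-of-exponentials expansion (Sections~\ref{approx.sect}--\ref{proof3.6}), assemble the resulting directions together with needle variations into a Boltyanski approximating cone, and separate it from the target/cost cone (Section~\ref{setsepsec}). The orientation-reversal device for producing both signs of $[g_i,g_j]$ and $[f,g_i]$, and the rigidity of the sign of the step-$3$ coefficient, are exactly what the paper records in Remark~\ref{imprem} and in the positivity of the constants $M,\tilde M,\check M$ of Theorem~\ref{eccolestime}.

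One sign is off. With your normalisation $p(T)\cdot v\le 0$ on $\mathcal K$, the variation direction you must place in $\mathcal K$ for case~(vii) is $+[g,[f,g]](\ol x(t))$, not $-[g,[f,g]]$: the leading coefficient of the step-$3$ expansion is $\int_0^1\bigl(\Gamma^1_{(1,0,1)}(s)\bigr)^2\,ds>0$, a square that remains positive under any orientation or sign reversal of $\gamma_{(1,0,1)}$, so only the $+$ sign is reachable, and this is precisely what forces $p(t)\cdot[g,[f,g]](\ol x(t))\le 0$. Placing $-[g,[f,g]]$ in the cone would instead give $p\cdot[g,[f,g]]\ge 0$, contradicting \eqref{LC32}. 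A secondary, harmless imprecision: the signed-area interpretation you invoke is literally valid for the Goh case; for~(vi) it holds only after pairing the single nonzero component $\Gamma^i_{(0,i)}$ with the auxiliary time primitive $A^0$, and for~(vii) the leading coefficient is the quadratic functional above, not a Green area.
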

	
	\begin{remark}
		If a control $u$ takes values in the interior of $U$, then it is   $i$- balanced  a.e. for all  $i\in\{1,\dots m\}$. Because of this, Goh and Legendre-Clebsch conditions in their classical form are a particular case of Theorem \ref{TeoremaPrincipale}.
	\end{remark}
	\begin{example} If 
		$U:= \mathbb{N}^3$, then any control map $u=(u^1,u^2,u^3):[0,T]\to U$ verifing $u^i(t) >0$ for every $i=1,2,3$ and almost  every $t\in [0,T]$ is $i$-balanced for every $i=1,2,3$. Therefore, if  the control $u$ is optimal it satisfies (the usual maximum principle and) the  $i$-Legendre-Clebsch condition for all $i=1,2,3$ as well as the $(1,2)$-,the $(1,3)$- and the $(2,3)$-Goh conditions. 
		
		Instead, an optimal  control $\hat u=(\hat u^1,\hat u^2,\hat u^3):[0,T]\to U$ such that  $\hat u^1(t)\equiv 0$ while $\hat u^2(t) >0$ and $\hat u^3(t) >0$  for almost every $t\in [0,T] $, satisfies $i$-Legendre-Clebsch condition for $i=2,3$, and the $(2,3)$-Goh condition. 
	\end{example}
%	\begin{example} {\textcolor{blue} {dov'e?}} {\gnu Prova a pensarne uno tu} \end{example}

				\subsection{An worked out example of not  optimal control }\label{ESEMPIO}  Let us consider the optimal control problem 
				
				$$
				\left\{\begin{array}{l}	
					\qquad\qquad\qquad\qquad\qquad\min x_3(1),\\ \ds \frac{dx}{dt}= g_1(x(t))u^1(t) + g_2(x(t))u^2(t), \quad  a.e. \,\,\, t \in [0,1]
					\\ x(0) = { \footnotesize\begin{pmatrix}2\\0\\0\end{pmatrix}}\quad \qquad x(1) \in \mathfrak{T} \end{array}\right.$$
				where $$
				g_1(x):={ \small\begin{pmatrix}1\\0\\-x_2\end{pmatrix}} \quad g_2(x):={ \small\begin{pmatrix}0\\1\\0\end{pmatrix}}\qquad 	\mathfrak{T} := \{0\}\times \{0\} \times \rr,	
				$$	
				and the control set $U\subset \mathbb{Z}^2$ is defined as  $$ U : = \Big\{-4,-2,0,3\Big\} \times \Big\{-6,0,4,7\Big\}. $$
				We wish to establish whether the constant  control map
				$$\hat u(t) = \begin{pmatrix} -2\\\\0 \end{pmatrix}\qquad \forall t\in[0,1]$$
%				\footnotetext{If $J$ is any set, and $I\subset J$, ${\bf 1}_I$ denotes the{ \it characteristic function of $I$}, namely  ${\bf 1}_I(y)=1$ if $y\in I$ and ${\bf 1}_I(y)=0$ if $y\notin I$. }
		
			is (feasible and) optimal. The trajectory $\hat x:[0,1]\to\rr^3$ corresponding to the control $\hat u$ is given by 
			$$
			\hat x(t) =\begin{pmatrix} 2-2t\\0\\0\end{pmatrix}\qquad t\in [0,1],
			$$
			which, in particular, says that  $\hat u$ is feasible, in that $\hat x(1) = 0\in \mathfrak{T} $. Let us check that the process $(\hat u,\hat x)$ verifies the first order Pontryagin Maximum Principle, namely there exists a pair $(\lambda,p)$ verifying ${\bf i)}-{\bf iv)}$ in Theorem \ref{TeoremaPrincipale}.   Indeed, since 
			$\hat x_2(t)=0 \,\,\forall t\in[0,1]$,
			the adjoint equation reduces to
			\bel{adj}
			\frac{dp}{dt}(t) = -p(t)\cdot \frac{\partial}{\partial x}\Big(g_1(x)\hat u^1(t) + g_2(x)\hat u^2(t) \Big)_{x=\hat x(t)} = (0,2p_3(t)\hat x_2(t)\hat u^1(t),0) =(0,0,0) \quad\forall t\in[0,1],
			\eeq
			while  the non-triviality and non-transversality condition read
			$$
			p({1})\in \Big\{(\alpha,\beta, -\lambda)\backslash \{0\},\,\,\,(\alpha,\beta)\in\rr^2, \lambda\geq 0\Big\}. \,\,
			$$
			Therefore
			one must have 
			$$
			p(t) = p(1)= (\alpha,\beta, -\lambda) \quad \forall t\in[0,1],
			$$
			for some $(\alpha,\beta)\in\rr^2$.
			First, let us see that, if 
			%if by contradiction $\lambda = 0$, by the adjoint equation \eqref{adj}  and $p_3(1)= -\lambda =0$ we get 
			% $$\big(p_1(t),p_2(t),p_3(t)\big) =\big(p_1(1),p_2(1),0\big)\forall t\in [0,1].$$
			% Furthermore,
			the maximization {\bf iv)} in Theorem \ref{TeoremaPrincipale} holds true, then   $\lambda$ cannot vanish. Indeed, such maximization implies that, for all   $\begin{pmatrix}u^1\\u^2\end{pmatrix}\in U$, one has
			\bel{max}
			\big(\alpha,\beta ,-\lambda\big)\cdot \Big(g_1(\hat x(t))(-2-u^1)+ g_2(\hat x(t))(0-u^2)\Big)\geq 0 \qquad \forall t\in[0,1]
			\eeq
			Taking $(u^1,u^2)=(-2,-6)$ and $(u^1,u^2)=(-2,4)$ in \eqref{max} we obtain
			$$
			6\big(\alpha,\beta,0\big)\cdot  g_2(\hat x(t))\geq 0, \qquad -4\big(\alpha,\beta,0\big)\cdot  g_2(\hat x(t))\geq 0,
			$$
			which implies $$0= \big(\alpha,\beta,\lambda\big)\cdot  g_2(\hat x(t)) = \big(\alpha,\beta,0\big)\cdot \begin{pmatrix}0\\1\\0\end{pmatrix}  = \beta $$
			On the other hand,  taking $(u^1,u^2)=(-4,0)$ and $(u^1,u^2)=(0,0)$ in \eqref{max} we get
			$$
			2\big(p_1(1),p_2(1),0\big)\cdot  g_1(\hat x(t))\geq 0, \qquad -2\big(p_1(1),p_2(1),0\big)\cdot  g_1(\hat x(t))\geq 0,
			$$
			which implies $$0= \big(\alpha,\beta ,0\big)\cdot  g_1(\hat x(t)) = \begin{pmatrix}1\\0\\\hat 0\end{pmatrix}  = \alpha.$$
			Therefore $p(t) = (\alpha,\beta,-\lambda )=(0,0,\-\lambda )$ for every $t\in [0,1]$. Hence we cannot choose  $\lambda=0$, for this choice would satisfy  the maximization relation {\bf iv)} in Theorem \ref{TeoremaPrincipale} but would contradict  the non-triviality condition {\bf i)}. Hence we must have 
			$$
			p(t) = (0,0,-\lambda) \qquad \text{with}\,\,\,\,\lambda \neq 0
			$$
			This implies that  the full higher order principle stated in  Theorem \ref{TeoremaPrincipale} is not satisfied, 
			in that the control $\hat u$ is Goh-fit a.e. while 
			$$
			p(t) \cdot [g_1,g_2](\hat x(t)) = (0,0,-\lambda) \cdot \begin{pmatrix}0\\0\\1 \end{pmatrix} = -\lambda\neq 0,
			$$
			namely the $(1,2)$-Goh condition {\bf v)} is violated. 
			%	\end{example}
}

\section{  Variations}\label{Lie gen}
The proof of the main result (Theorem \ref{TeoremaPrincipale}) will be based on  set-separation arguments (see Section \ref{setsepsec}), which, in particular, require the generation  of adequate {\it variations} of the optimal process. As explained in the Introduction, this is the crucial issue  of the present paper, for we are going to  produce Lie bracket-based variations even when the classical assumption that the optimal control's values are 
in the interior of the control set $U$ (which might  be even empty) fails to be verified.

\subsection{Variation builders}

Let us introduce a class of indexes, that we shall call  the set of {\it variation signals}. 
\begin{definition}[Variation signals]\label{signals}
	Let us define the set $\mathfrak{V}$ of {\rm  variation signals} as the union
	$$\mathfrak{V}:=\mathfrak{V}_{ndl}\bigcup \mathfrak{V}_{Goh} \bigcup \mathfrak{V}_{LC2} \bigcup \mathfrak{V}_{LC3},$$
	where 
	$$\mathfrak{V}_{ndl}:=U,
	\,\,	\mathfrak{V}_{Goh}:=\{(i,j)\,\,\,i < j\,\,\,\, j,i=1,\ldots,m\},
	\,\,\mathfrak{V}_{LC2}:=\{(0,i)\,\,, i=1,\ldots,m\},
	\,\,\mathfrak{V}_{LC3}:=\{(1,0,1)\}.$$
	
	% \footnote{In particular, $\mathfrak{V}_{LC3}$ is a singleton and will be used only in the case $m=1$  in order to generate the Legendre\textendash{}Clebsch inequality}
\end{definition}

{{Every variation signal ${\bf c}\in\mathfrak{V}$ will represent  an associated  {\it variation builder}, in a way we now describe. 
		
		\begin{definition}[Variation builders]\begin{itemize}
			\item[{\bf i)}] If	${\bf c}\in\mathfrak{V}_{ndl}$, namely ${\bf c}=\bar u\in U$ 	the  associated variation builder  will be the standard needle variation corresponding to $\bar u$.  
			
			% Consider a control $u:[0,T]\to U$, and fix a time $t\in[0,T[.$ 
			
			%		 For a choice $i,j\in\{1,\dots m\}$,  $i<j$,
			%		  let us  assume that $u$ is both $i$- and $j$-balanced at $t$. So there exist
			\item[{\bf ii)}] 	If ${\bf c}\in\mathfrak{V}_{Goh}$, that is ${\bf c}=(i,j)$, for some $0<i<j\leq m$, let us choose four positive real numbers  $\alpha^i,\beta^i$, $\alpha^j,\beta^j$    
%			\footnote{ These $\alpha^i,\beta^i$, $\alpha^j,\beta^j$  will be the once corresponding to a $(i,j)$ variation signal  }
			%		   and a $\delta\in ]0, T-t[$ such that 
			%		 $$\Big\{u(s)+\alpha^i{\bf e}_i, u(s)-\beta^i{\bf e}_i, u(s)+\alpha^j{\bf e}_j, u(s)-\beta^j{\bf e}_j     \Big\}\subseteq U,\quad\forall s\in [t,t+\delta].$$
			and let us define the times $\tau_0,\tau_1,\dots,\tau_4(=1/2)$  by setting
			$$\ds r:={(\alpha^i)}^{-1}+{(\alpha^j)}^{-1}+{(\beta^i)}^{-1}+{(\beta^j)}^{-1}$$
			$$\tau_0=0,\quad	\tau_1:=\ds \tau_{0}+\frac{({\alpha^i)}^{-1}}{2r}\quad 
			\tau_2:=\ds \tau_{1}+\frac{({\alpha^j})^{-1}}{2r}\quad
			\tau_3:=\ds \tau_{2}+\frac{({\beta^i})^{-1}}{2r}\quad
			\tau_4:=\ds \tau_{3}+\frac{({\beta^j})^{-1}}{2r} =\frac12
			$$
			
			%	\footnote{Indeed, $\ds
			%	\tau_8 = \sum_{h=1}^8 (\tau_h-\tau_{h-1}) = \sum_{h=1}^4 (\tau_h-\tau_{h-1}) + \sum_{h=5}^8 (\tau_h-\tau_{h-1}) = \sum_{h=1}^4 \frac{{\alpha_h}^{-1}}{2r} + \sum_{h=1}^4  \frac{{\alpha_{h}}^{-1}}{2r} = \frac{r}{2r} + \frac{r}{2r} = 1
			%	$}
			The variation builder associated to ${\bf c}=(i,j)$ is  the (piece-wise constant) map $\gamma_{(i,j)}$ on $[0,1]$ 
			%\bel{leQij}\begin{array}{l}\ds
			%\gamma_{(i,j)}(s) := \alpha^1\mathbf{1}_{\left[0,\tau_1\right]}(s)\mathbf{e}_i +\alpha^j\mathbf{1}_{\left[\tau_1,\tau_2\right] }(s)\mathbf{e}_j -\beta^i\mathbf{1}_{\left[\tau_2,\tau_3\right]}(s)\mathbf{e}_i -\beta^j\mathbf{1}_{\left[\tau_3,1\right] }(s)\mathbf{e}_j \\\\
			%\qquad- \beta^i\mathbf{1}_{\left[\tau4,\tau_5\right]}(s)\mathbf{e}_i -\beta^j\mathbf{1}_{\left[\tau_5,\tau_6\right] }(s)\mathbf{e}_j +\alpha^i\mathbf{1}_{\left[\tau_6,\tau_7\right]}(s)\mathbf{e}_i+\alpha^j\mathbf{1}_{\left[\tau_7,1\right] }(s)\mathbf{e}_j \quad s\in[0,1] .\end{array}
			%\eeq
			
			{
				\bel{leQij}
				\gamma_{(i,j)}(s) :=\left\{\begin{array}{ll} 	\,\,\,\,\hat\gamma_{(i,j)}(s)\quad&\forall s\in[0,1/2]\\
					-\hat\gamma_{(i,j)}(s-1/2)\quad&\forall s\in[1/2,1]				
				\end{array}           \right.
				\eeq
				where $\hat\gamma_{(i,j)}$ is defined (on $[0,1/2])$ as follows:
				$$
				\hat\gamma_{(i,j)}(s) := \alpha^i\mathbf{1}_{\left[0,\tau_1\right]}(s)\mathbf{e}_i +\alpha^j\mathbf{1}_{\left[\tau_1,\tau_2\right] }(s)\mathbf{e}_j -\beta^i\mathbf{1}_{\left[\tau_2,\tau_3\right]}(s)\mathbf{e}_i -\beta^j\mathbf{1}_{\left[\tau_3,1\right] }(s)\mathbf{e}_j \quad\forall s\in [0,1/2]
				$$
				More explicitly, if we consider the times 
				$$ \tau_5:=\ds\tau_{4}+\frac{({\beta^j})^{-1}}{2r}\quad \tau_6:=\ds \tau_{5} +\frac{({\beta^i})^{-1}}{2r}\quad
				\tau_7:=\ds \tau_{6} +\frac{({\alpha^j})^{-1}}{2r}\quad \tau_8:=\ds \tau_{7} + \frac{({\alpha^i})^{-1}}{2r} =1 ,	
				$$	
				we get, for every $s\in[0,1]$, 		\bel{leQij}\begin{array}{l}\ds
					\gamma_{(i,j)}(s) := \alpha^i\mathbf{1}_{\left[0,\tau_1\right]}(s)\mathbf{e}_i +\alpha^j\mathbf{1}_{\left[\tau_1,\tau_2\right] }(s)\mathbf{e}_j -\beta^i\mathbf{1}_{\left[\tau_2,\tau_3\right]}(s)\mathbf{e}_i -\beta^j\mathbf{1}_{\left[\tau_3,1\right] }(s)\mathbf{e}_j \\\\
					\qquad- \beta^i\mathbf{1}_{\left[\tau4,\tau_5\right]}(s)\mathbf{e}_i -\beta^j\mathbf{1}_{\left[\tau_5,\tau_6\right] }(s)\mathbf{e}_j +\alpha^i\mathbf{1}_{\left[\tau_6,\tau_7\right]}(s)\mathbf{e}_i+\alpha^j\mathbf{1}_{\left[\tau_7,1\right] }(s)\mathbf{e}_j .\end{array}
				\eeq}
			
			(Notice that $\gamma_{(i,j)}([0,1])\subset \mathbf{e}_i\rr\times \mathbf{e}_j\rr$).
			
			\item[{\bf iii)}] If we consider a variation signal  ${\bf c}\in \mathfrak{V}_{LC2}$, namely ${\bf c}=(0,i)$ for some  $i\in\{1,\dots m\}$, after  choosing  two positive numbers  $\alpha^i,\beta^i$ and setting $\ds\bar\tau:= \frac{\alpha^i}{\alpha^i+\beta^i}$, let us associate the  variation builder  %$\gamma_{(0,i)}:[0,1]\to \mathbb{R}\times \rr^m$,	
			{\bel{leQi}
			\gamma_{(0,i)}(s) := \gamma_{(0,i)}^i(s)\mathbf{e}_i
			:= \Big(-\beta^i\mathbf{1}_{\left[0,\bar\tau\right]}(s) +\alpha^i\mathbf{1}_{\left[\bar\tau,1\right] }(s)\Big)\mathbf{e}_i \qquad \forall s\in [0,1]
			\eeq
			
			}
			
			\item[{\bf iv)}] If ${\bf c}\in \mathfrak{V}_{LC3}$, namely  $m=1$ and ${\bf c}=(1,0,1)$ ,
			after  choosing  two positive numbers  $\alpha^1,\beta^1$ and setting $\ds{\hat\tau}_1:=\frac{\alpha^1}{2(\alpha^1+\beta^1)} \quad {\hat\tau}_2:=1-\hat\tau_1=\frac{\alpha^1+2\beta^1}{2(\alpha^1+\beta^1)}$, as variation builder   let us consider the map 
		{ $$
			\gamma_{(1,0,1)}(s) := \gamma_{(1,0,1)}^1(s) \mathbf{e}_1 := \Big(-
			\beta^1\mathbf{1}_{\left[0,{\hat\tau}_1\right]}(s)\mathbf +\alpha^1\mathbf{1}_{\left[{\hat\tau}_1 ,{\hat\tau}_2\right] } - \beta^1\mathbf{1}_{\left[{\hat\tau}_2,1\right] }\Big) \mathbf{e}_1 \qquad\forall s\in [0,1]$$
			}
				\end{itemize}	\end{definition}
		\subsubsection{Primitives of the variation generators $\gamma_{i,j},\gamma_{0,i}$ $\gamma_{1,1,0}$} To prove Theorem \ref{eccolestime} below, we will consider peculiar properties of  the  (continuous, piecewise linear)  primitives 
		%$\Gamma_{i,j},\Gamma_{0,i},\Gamma_{1,1,0}$ of the maps $\gamma_{i,j},\gamma_{0,i}$ $\gamma_{1,1,0}$, respectively,  $(1\leq i<j\leq m)$, defined as 
		{$$ \Gamma_{(i,j)}(s):=\int_0^s \gamma_{(i,j)}(\sigma)d\sigma\qquad   \Gamma_{(0,i)}(s): =\int_0^s \gamma_{(0,i)}(\sigma)d\sigma,\quad s\in [0,1],\,\,\, 1\leq i<j\leq m $$
			$$
			\Gamma_{(1,0,1)}(s) := \ds\int_0^s \gamma_{(1,0,1)}(\sigma)d\sigma, \qquad \quad \forall s\in [0,1]. $$

			Notice that, for every $s\in [0,\tau_4]  = [0,1/2]$, one has
			$$ \Gamma_{(i,j)}(s) = \hat\Gamma_{(i,j)}(s) := \int_0^s \hat\gamma_{(i,j)}(\sigma)(\sigma) = \left\{\begin{array}{ll}s\alpha^i\mathbf{e}_i\quad &\forall s\in \left[0,\tau_1\right]\\
				(2r)^{-1}\mathbf{e}_i	
				+(s-\tau_1)\alpha^j\mathbf{e}_j\quad &\forall s\in \left[\tau_1,\tau_2\right] \\
				(2r)^{-1}\mathbf{e}_i	+ (2r)^{-1}\mathbf{e}_j	  -(s-\tau_2)\beta^1\mathbf{e}_i\quad &\forall s\in \left[\tau_2,\tau_3\right]\\ (2r)^{-1}\mathbf{e}_j -(s-\tau_3)\beta^j\mathbf{e}_j &\forall s\in \left[\tau_3,1/2\right].
			\end{array}\right.
			$$
			Therefore, for every $s\in [0,\tau_8]=[0,1]$ one gets 	
			\bel{Gammaij}
			\Gamma_{(i,j)}(s)=
			\left\{\begin{array}{ll}\ds \hat\Gamma_{(i,j)}(s)  \quad & \forall s\in [0,1/2]
				\\ \ds  \hat\Gamma_{(i,j)}(1/2)  - \hat\Gamma_{(i,j)}(s-1/2)  & \forall s\in [1/2,1]. \end{array} \right.
			\eeq
			Indeed,
			$$\begin{array}{c} \ds\Gamma_{(i,j)}(s)=
				\int_0^s \left(\gamma_{(i,j)}(\sigma){\bf 1}_{[0,1/2]}(\sigma) +  \gamma_{(i,j)}(\sigma){\bf 1}_{[1/2,1]}(\sigma)\right) d\sigma \\\\
				\ds	=\int_0^s \Big(\gamma_{(i,j)}(\sigma){\bf 1}_{[0,1/2]}(\sigma) -  \hat\gamma_{(i,j)}(\sigma-1/2){\bf 1}_{[1/2,1]}(\sigma)\Big) d\sigma
				\\\\ =\ds\left\{\begin{array}{ll}\ds \hat\Gamma_{(i,j)}(s) \quad & \forall s\in [0,1/2]
					\\ \ds  \hat\Gamma_{(i,j)}(1/2)  - \int_{1/2}^s \hat\gamma_{(i,j)}(\sigma-1/2)d\sigma & \forall s\in [1/2,1] \end{array} =\right. 
				\end{array} 
			$$
			$$
	\begin{array}{c} = \ds\left\{\begin{array}{ll}\ds \hat\Gamma_{(i,j)}(s)  \quad & \forall s\in [0,1/2]
					\\ \ds  \hat\Gamma_{(i,j)}(1/2)  - \int_{0}^{s-1/2} \hat\gamma_{(i,j)}(\xi)d\xi & \forall s\in [1/2,1] \end{array} \right. =
				\left\{\begin{array}{ll}\ds \hat\Gamma_{(i,j)}(s)  \quad & \forall s\in [0,1/2]
					\\ \ds  \hat\Gamma_{(i,j)}(1/2)  - \hat\Gamma_{(i,j)}(s-1/2)  & \forall s\in [1/2,1] \end{array} \right.
			\end{array}
			$$
			
			More explicitly, one has 	
			
			$$ \Gamma_{(i,j)}(s):=\int_0^s \gamma_{(i,j)}(\sigma)d\sigma = \left\{\begin{array}{ll}s\alpha^i\mathbf{e}_i\quad &\forall s\in \left[0,\tau_1\right]\\
				(2r)^{-1}\mathbf{e}_i	
				+(s-\tau_1)\alpha^j\mathbf{e}_j\quad &\forall s\in \left[\tau_1,\tau_2\right] \\
				(2r)^{-1}\mathbf{e}_i	+ (2r)^{-1}\mathbf{e}_j	  -(s-\tau_2)\beta^i\mathbf{e}_i\quad &\forall s\in \left[\tau_2,\tau_3\right]\\ (2r)^{-1}\mathbf{e}_j -(s-\tau_3)\beta^j\mathbf{e}_j &\forall s\in \left[\tau_3,\tau_4\right]
				\\ -(s-\tau_4)\alpha^i\mathbf{e}_i &\forall s\in \left[\tau_4,\tau_5\right]
				\\ -(2r)^{-1}\mathbf{e}_i -(s-\tau_5)\alpha^j\mathbf{e}_j &\forall s\in \left[\tau_5,\tau_6\right]
				\\ -(2r)^{-1}\mathbf{e}_i -(2r)^{-1}\mathbf{e}_j +(s-\tau_6)\beta^i\mathbf{e}_i &\forall s\in \left[\tau_6,\tau_7\right]
				\\ -(2r)^{-1}\mathbf{e}_j +  (s-\tau_7)\beta^j\mathbf{e}_j   &\forall s\in \left[\tau_7,1\right]
			\end{array}\right.
			$$}
		
		Furthermore, for every $i=1\ldots,m$ one gets
		{
		$$  \Gamma_{(0,i)}(s): = \Gamma_{(0,i)}^i(s) \mathbf{e}_i=\int_0^s \gamma_{(0,i)}(\sigma)d\sigma =
		\left\{\begin{array}{ll}  -s\beta^i\mathbf{e}_i\quad &\forall s \in \left[0,\bar\tau\right]\\\\ \big(-\bar\tau\beta^1+(s-\bar\tau)\alpha^i\big)\mathbf{e}_1\ \quad&\forall s \in \left[\bar\tau,1\right]\end{array}\right. $$}
		{and, finally, 
			{$$\begin{array}{l}
			\Gamma_{(1,0,1)}(s):=\Gamma_{(1,0,1)}^1(s) \mathbf{e}_1=\ds\int_0^s \gamma_{(1,0,1)}(\sigma)d\sigma =\\\qquad \qquad\qquad\qquad\qquad\qquad\qquad\qquad =\left\{\begin{array}{ll} -s\beta^1\mathbf{e}_1\quad&\forall s\in\left[0,{\hat\tau}_1\right]\\\\ \Big(-{\hat\tau}_1\beta^1+(s-{\hat\tau}_1)\alpha^1\Big)\mathbf{e}_1&\forall s\in\left[{\hat\tau}_1,{\hat\tau}_2\right]\\\Big(-{\hat\tau}_1\beta^1 +
				({\hat\tau}_2-{\hat\tau}_1)\alpha^1-(s-{\hat\tau}_2)\beta^1\Big)\mathbf{e}_1\,\, &\forall s\in\left[{\hat\tau}_2,1\right],
			\end{array}\right.\end{array}$$}}
		For every $h=0,\ldots,m$  let us use  $\Gamma_{(i,j)}^h$, $\Gamma_{(0,i)}^h$,
		and $\Gamma_{(1,0,1)}^h$  to denote the $h-th$ components  of  $\Gamma_{(i,j)}$  $\Gamma_{(0,i)}$, and $\Gamma_{(1,0,1)}$, respectively. Notice that, in particular, 
		{  that  $\Gamma_{(i,j)}^0(s)=\Gamma_{(0,i)}^0(s) = \Gamma_{(1,0,1)}^0(s)  =0$, $\forall s\in[0,1] .$ }

		The following two lemmas will be crucial in the construction of control variations.
		\begin{lemma}\label{integrali}
			For every 
			$i,j\in \{1,\ldots,m\}$, $i<j$, one has
			{ \bel{periodic}
			\begin{array}{l}\ds	0=\Gamma_{(i,j)}(0)=\Gamma_{(i,j)}(1)=\Gamma_{(0,i)}(0)=\Gamma_{(0,i)}(1)\\\\ \ds0=\Gamma_{(1,0,1)}^1(0)=\Gamma_{(1,0,1)}^1(1)\quad  \quad \int_0^1 \Gamma_{(1,0,1)}(s)ds = 0,
			\end{array} 
			\eeq  }
			\bel{int0}
			\bigintsss_0^1 \Gamma_{(i,j)(s)}ds
			%	 = \bigintsss_0^1 \Gamma_{(0,i)}(s)d
			%  = \bigintsss_0^1 \Gamma_{(1,0,1)}(s)ds 
			= 0,
			\eeq	and
			\bel{oi}\bigintssss_0^1\Gamma_{(0,i)}^h(s)ds = \left\{\begin{array}{ll}\ds -\frac{\alpha^i \beta^i}{2(\alpha^i+\beta^i)}\quad &\text{if}\,\,\, h=i\\\\
				\ds	\,\,\,\,\,\,\,\,\,\,\,\,\,0 &\text{if}\,\,\, h \in\{1,\dotsm\}\backslash\{i\} \end{array}\right.
			\eeq
		\end{lemma}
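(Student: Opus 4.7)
The plan is to verify each identity in \eqref{periodic}, \eqref{int0}, \eqref{oi} by direct computation from the piecewise-constant definitions of the generators $\gamma_{\mathbf{c}}$ and the piecewise-linear primitives $\Gamma_{\mathbf{c}}$ already written out above, grouping the checks by variation signal. The upshot will be that the interval-length parameters $\tau_k$, $\bar\tau$, $\hat\tau_k$ were designed precisely to force the cancellations at hand, so everything reduces to routine bookkeeping.

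For the boundary identities in \eqref{periodic}, since each $\Gamma_{\mathbf{c}}(0)=0$ by definition of primitive, the content is the vanishing of $\int_0^1 \gamma_{\mathbf{c}}$. For $\mathbf{c}=(i,j)$ the subinterval lengths $(\alpha^i)^{-1}/(2r)$, $(\alpha^j)^{-1}/(2r)$, $(\beta^i)^{-1}/(2r)$, $(\beta^j)^{-1}/(2r)$ are engineered so that the products $\alpha^i\cdot(\alpha^i)^{-1}/(2r)$, etc., collapse to $\pm\frac{1}{2r}$, whence $\int_0^{1/2}\hat\gamma_{(i,j)}=0$, i.e. $\hat\Gamma_{(i,j)}(1/2)=0$; the anti-symmetric extension \eqref{leQij} on $[1/2,1]$ then forces $\int_0^1\gamma_{(i,j)}=0$ as well. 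For $\mathbf{c}=(0,i)$ the choice $\bar\tau=\alpha^i/(\alpha^i+\beta^i)$ is exactly the one that makes $-\beta^i\bar\tau+\alpha^i(1-\bar\tau)=0$. For $\mathbf{c}=(1,0,1)$ the choice $\hat\tau_1=1-\hat\tau_2=\alpha^1/(2(\alpha^1+\beta^1))$ yields $-\beta^1\hat\tau_1+\alpha^1(\hat\tau_2-\hat\tau_1)-\beta^1(1-\hat\tau_2)=0$.

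For \eqref{int0} I will use the piecewise description \eqref{Gammaij}: on $[1/2,1]$, since $\hat\Gamma_{(i,j)}(1/2)=0$ by the calculation just made, we have $\Gamma_{(i,j)}(s)=-\hat\Gamma_{(i,j)}(s-1/2)$, while on $[0,1/2]$ we have $\Gamma_{(i,j)}(s)=\hat\Gamma_{(i,j)}(s)$; so the substitution $s\mapsto s-1/2$ shows that $\int_0^{1/2}\Gamma_{(i,j)}(s)\,ds$ and $\int_{1/2}^1\Gamma_{(i,j)}(s)\,ds$ are exact opposites, giving the claim. The equality $\int_0^1\Gamma_{(1,0,1)}(s)\,ds=0$ in \eqref{periodic} will be handled by a direct three-piece integration of the explicit formula for $\Gamma_{(1,0,1)}^1$ using $\hat\tau_1=1-\hat\tau_2$; the quadratic contributions in $\alpha^1$ and $\beta^1$ collapse by the same symmetry that made $\int_0^1\gamma_{(1,0,1)}=0$ above.

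For \eqref{oi}, when $h\neq i$ the component $\Gamma_{(0,i)}^h$ is identically zero and there is nothing to prove. When $h=i$, I would integrate the explicit formula for $\Gamma_{(0,i)}^i$ separately over $[0,\bar\tau]$ and over $[\bar\tau,1]$, obtaining $-\beta^i\bar\tau^2/2$ and $-\bar\tau\beta^i(1-\bar\tau)+\alpha^i(1-\bar\tau)^2/2$ respectively; substituting $\bar\tau=\alpha^i/(\alpha^i+\beta^i)$ and $1-\bar\tau=\beta^i/(\alpha^i+\beta^i)$ simplifies the sum to $-\alpha^i\beta^i/(2(\alpha^i+\beta^i))$. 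None of these steps is conceptually hard; the only pitfall worth watching is the sign bookkeeping when transferring from $[0,1/2]$ to $[1/2,1]$ in the $(i,j)$ case, which the explicit formula \eqref{Gammaij} combined with a single change of variables handles cleanly.
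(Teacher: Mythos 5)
Your proposal is correct and follows essentially the same route as the paper's own one-paragraph proof: direct verification from the explicit piecewise formulas, with \eqref{int0} reduced to the cancellation built into \eqref{Gammaij}. You are in fact slightly more careful than the paper in flagging that $\hat\Gamma_{(i,j)}(1/2)=0$ (a consequence of how the $\tau_k$ were chosen) is what makes the decomposition $\int_0^1\Gamma_{(i,j)}=\int_0^{1/2}\hat\Gamma_{(i,j)}-\int_0^{1/2}\hat\Gamma_{(i,j)}$ legitimate, a step the paper leaves implicit.
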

		\begin{proof} Equalities \eqref{periodic} are trivial.  As for \eqref{int0}, by \eqref{Gammaij}  one gets 
			$$
			\bigintsss_0^1 \Gamma_{(i,j)}(s)ds = \bigintsss_0^{1/2} \hat\Gamma_{(i,j)}(s)ds  - \bigintsss_{1/2}^1 \hat\Gamma_{(i,j)}(s-1/2)ds =  \bigintsss_0^{1/2} \hat\Gamma_{(i,j)}(s)ds -  \bigintsss_0^{1/2} \hat\Gamma_{(i,j)}(s)ds=0		
			$$
			Equality \eqref{oi} is just the computation of the integral of a piecewise linear continuous map on $[0,1]$ . 
			%Notice that this integral coincides with the opposite  of the surface  of  a triangle with base equal to  $1$ and height equal to $\ds \frac{\alpha^i \beta^i}{\alpha^i+\beta^i}$.
		\end{proof}
		
		Other important quantities will be the integrals of  products like $\Gamma_{(i,j)}^i\gamma_{(i,j)}^j(t)$, $\Gamma_{(i,0)}^i\gamma_{(i,0)}^0$,\linebreak and $\Gamma_{(1,0,1)}^1\gamma_{(1,0,1)}^j(t)$, which are connected to certain areas:
		\begin{lemma}\label{integrali2} For every pair $(i,j)$, $1\leq i<j\leq m$ one has 
			\bel{areazero1}
			\int_0^1 \Gamma_{(i,j)}^i(t)\dot \Gamma_{(i,j)}^j(t)dt =  Area\Big(\Gamma_{(i,j)}^i,\Gamma_{(i,j)}^j\Big) =\frac{1}{2r^2} \qquad\footnotemark\eeq
			\footnotetext{Because of the inverted direction, one obviously has $ Area\Big(\Gamma_{(i,j)}^j,\Gamma_{(i,j)}^i\Big)=-\frac{1}{2r^2}$}
			Moreover, for all $ h \in\{0,\dots m\}\backslash\{i,j\}$,
			{\bel{areazero2} \int_0^1 \Gamma_{(i,j)}^i(t)\dot\Gamma_{(i,j)}^h(t)dt = Area\Big(\Gamma_{(i,j)}^i,\Gamma_{(i,j)}^h\Big) =0 = Area\Big(\Gamma_{(i,j)}^j, \Gamma_{(i,j)}^h\Big)=\int_0^1 \Gamma_{(i,j)}^j(t)\dot\Gamma_{(i,j)}^h(t)dt \eeq}

		\end{lemma}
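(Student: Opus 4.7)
The first equality in each of \eqref{areazero1} and \eqref{areazero2} is simply the definition of $Area$ recalled in \eqref{area}, so I only need to evaluate those integrals.

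The vanishing statement \eqref{areazero2} is immediate. By construction the variation builder $\gamma_{(i,j)}$ takes values in $\mathbf{e}_i\mathbb{R}\oplus\mathbf{e}_j\mathbb{R}$, and therefore so does its primitive $\Gamma_{(i,j)}$. Hence $\Gamma_{(i,j)}^h\equiv 0$ for every $h\in\{0,\dots,m\}\setminus\{i,j\}$, and both pairs of integrals and both areas vanish trivially; the same comment disposes of the analogous integral involving $\Gamma_{(i,j)}^j$.

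For the main identity \eqref{areazero1} I proceed piecewise on the eight intervals $[\tau_{k-1},\tau_k]$ fixed in the construction of $\gamma_{(i,j)}$. On each of these exactly one of $\dot\Gamma_{(i,j)}^i,\dot\Gamma_{(i,j)}^j$ is nonzero, so the integrand $\Gamma_{(i,j)}^i\dot\Gamma_{(i,j)}^j$ vanishes on the four intervals on which only the $\mathbf{e}_i$-component moves. On each of the four remaining intervals $[\tau_1,\tau_2]$, $[\tau_3,\tau_4]$, $[\tau_5,\tau_6]$, $[\tau_7,1]$ the factor $\Gamma_{(i,j)}^i$ is constant, equal to $0$ or $\pm(2r)^{-1}$ by the explicit formula for $\Gamma_{(i,j)}$. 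The two intervals where this constant is zero contribute nothing; on each of the other two the constant and the total $\mathbf{e}_j$-increment both have absolute value $(2r)^{-1}$ with matching signs, so each contributes $(2r)^{-2}$ and the sum is $1/(2r^2)$.

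I expect the main hurdle to be purely bookkeeping: making sure that on $[\tau_5,\tau_6]$ the product of the negative constant value of $\Gamma_{(i,j)}^i$ with the negative $\mathbf{e}_j$-increment is indeed positive, so that the two nonzero contributions add rather than cancel. Geometrically, the closed piecewise linear curve $(\Gamma_{(i,j)}^i,\Gamma_{(i,j)}^j)$ is the concatenation of two squares of side $(2r)^{-1}$, one in the first quadrant (traced on $[0,1/2]$) and one in the third quadrant (traced on $[1/2,1]$), both oriented counterclockwise; Green's theorem then provides the sanity check $2\cdot(2r)^{-2}=1/(2r^2)$, and the reversed orientation stated in the footnote is just the antisymmetry of $Area$ in its two arguments.
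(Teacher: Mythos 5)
Your proof is correct and follows essentially the same route as the paper: the paper also notes that \eqref{areazero1} can be obtained by direct piecewise computation and gives the same geometric interpretation (the image of $(\Gamma_{(i,j)}^i,\Gamma_{(i,j)}^j)$ is a union of two squares of side $(2r)^{-1}$ run counterclockwise, hence area $2\cdot(2r)^{-2}=1/(2r^2)$), while \eqref{areazero2} is dismissed as trivial just as you do by observing $\Gamma_{(i,j)}^h\equiv 0$ for $h\notin\{i,j\}$. Your bookkeeping on $[\tau_5,\tau_6]$ is right: there $\Gamma_{(i,j)}^i\equiv -(2r)^{-1}$ and $\dot\Gamma_{(i,j)}^j\equiv -\alpha^j$, so the product is positive and the two nonzero contributions add to $1/(2r^2)$.
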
 
		\footnotetext{The introduction of the functions ${\tilde\Gamma}_{(0,i)}^h$, ${\tilde\Gamma}_{(1,0,1)}^0$, ${\tilde\Gamma}_{(1,0,1)}^1$ is motivated by the fact that, in order to consider the notion of $Area$, we need {\it closed} curves. }

		\begin{proof}	
			Equality \eqref{areazero1}, can obviously obtained by direct computation. However it  is geometrically  deductible  
			by the fact that the  image $s\mapsto (\Gamma_{i,j}^i, \Gamma_{i,j}^i) ([0,1])$  is the the union of two equal squares --run in counterclockwise sense-- whose  edges' lenghts are  $1/2r$. So  
			$\ds Area\Big(\Gamma_{(i,j)}^i,\Gamma_{(i,j)}^j\Big) = 2\cdot (1/2r)^2 = \frac{1}{2r^2}.$
			
			Equalities \eqref{areazero2},\eqref{areazero3}, \eqref{areazero4},  and  \eqref{areazero5} are trivially  deduced by computation.
	\footnote{Incidentally,  equality  \eqref{areazero3} can also be  interpreted it geometrically, so obtaining   $ Area\Big({\tilde\Gamma}_{(0,i)}^i,{\tilde\Gamma}_{(0,i)}^0\Big) = - \int_0^1\Gamma_{(0,i)}^h(s)ds$ (where the minus sign comes from opposite direction of rotations).}\quad\footnote{One can argue geometrically also  for the first equality in \eqref{areazero5}, in that  it reduces to determining the signed area of the trapezoid spanned by $\Gamma_{(1,0,1)}^1$, which comprises two triangles having opposite areas.  }	\end{proof}
		
		%{\fra
		%	\begin{equation}
		%		\left(\ds	\int_{0}^{\tau_1} s \alpha^i e_1 ds = \frac{\tau^2_1 \alpha^i e_i}{2} =
		%		\frac{\tau^2_1 \frac{1}{2r\tau_1} e_i}{2}= 	\frac{\tau_1 }{4r} e_i\right) +
		%	\end{equation}
		%	
		%	\begin{equation}\left(\begin{array}{l}
		%			\ds\int_{\tau_1}^{\tau_2} (2r)^{-1} e_i + (s-\tau_1)\alpha^j e_j ds = (2r)^{-1} e_i (\tau_2-\tau_1) + \frac{(\tau_2-\tau_1)^2}{2} \alpha^j e_j\\
		%			=\frac{1}{2r} e_i (\tau_2-\tau_1) + \frac{1}{4r}{(\tau_2-\tau_1)} e_j\end{array}\right)+
		%	\end{equation}
		%	\begin{equation}+\left(\begin{array}{l}\ds
		%			\int_{\tau_2}^{\tau_3} \frac{1}{2r} (e_i+e_j) - (s-\tau_2)\beta^i e_i ds = (2r)^{-1}  (\tau_3-\tau_2) (e_i + e_j) - \frac{(\tau_3-\tau_2)^2}{2} \beta^i e_i =\\\ds
		%			\frac{1}{2r} (\tau_3-\tau_2) (e_i + e_j) - \frac{(\tau_3-\tau_2)}{4r}  e_i\end{array}\right) +
		%	\end{equation}
		%	\begin{equation}+\left(\begin{array}{l}\ds
		%			\int_{\tau_3}^{\tau_4} (2r)^{-1} e_j - (s-\tau_3)\alpha_4 e_j ds = (2r)^{-1}  (\tau_4-\tau_3)  e_j - \frac{(\tau_4-\tau_3)^2}{2} \alpha_4 e_j=\\
		%			\ds \frac{1}{2r}  (\tau_4-\tau_3)  e_j - \frac{(\tau_4-\tau_3)}{4r}  e_j\end{array}\right) =
		%	\end{equation}
		%	
		%	$$ = \left(\frac{\tau_1+\tau_2+\tau_3+\tau_4}{2r}\right)e_i +\left(\frac{\tau_1+\tau_2+\tau_3+\tau_4}{2r}\right)e_j= \frac12 e_i + \frac12 e_j  $$}

\subsection{Control variations}
\begin{definition}[Families of control variations]\label{defvar}
	Let us fix a map  $u\in L^\infty([0,T],\rr^m) $, 
	a Lebesgue point $\bar t\in]0,T[$ for $u$,\footnote{Let us recall that, if $\varphi:[a,b]\mapsto \rr^k$ is a $L^1$ map, a $t\in ]a,b[$ is called a Lebesgue point for $\varphi$ if  $$\ds\lim_{r\to 0+} \frac{1}{2r}\int_{t-r}^{t+r}|\varphi(s)-\varphi(t)|ds =0.  $$} and a parameter $\varepsilon>0$ such that $\sqrt[3]{\varepsilon}\le\ol{t}$.
	%	To every    $\mathbf{c}=\hat u\in \mathfrak{V}_{ndl}(=U)$ and every $\alpha>0$ we associate  the family  of $\varepsilon$-dependent  controls ${\big\{ {{ u}}_{\alpha,\varepsilon,\mathbf{c},\ol{t}}(t): \, \varepsilon \in [0,\ol{t})\big\}}$ defined as follows.
	\begin{itemize}
		\item If  $\mathbf{c}=\hat u\in \mathfrak{V}_{ndl}(=U)$  we define   the family  of $\varepsilon$-dependent  controls ${\big\{ {{ u}}_{\varepsilon,\mathbf{c},\ol{t}},  \,\,\, \varepsilon \in [0,\ol{t})\big\}}$ by setting \bel{perneedle}{{u}}_{\varepsilon,\mathbf{c},\ol{t}}(t):=\begin{cases}{{u}}(t) &\text{ if } t \in [0,\ol{t}-\varepsilon) \cup (\ol{t},T]\\  \hat u &\text{ if } t \in [\ol{t}-\varepsilon,\ol{t}],\end{cases}\eeq 
		This  family  is usually referred as a {\rm needle variation} of $u$ at $\bar t$. 	
		\item 
		If
		$\mathbf{c}=(i,j)\in  \mathfrak{V}_{Goh} $,  $1\leq i<j\leq m$, and the control $u$ is both $i$-balanced and $j$-balanced  at $\bar t $,   we define the $\varepsilon$-parameterized family   ${\Big\{ {{u}}_{\varepsilon,\mathbf{c},\ol{t}},\,\,\, 0<\sqrt{\varepsilon}\le\ol{t}\Big\}}$ of controls by setting
		\bel{perGoh}{u}_{\varepsilon, \mathbf{c},\ol{t}}(t):=\begin{cases}
			{{u}}(t) & \text{ if } t \not \in [\ol{t}-\sqrt{\varepsilon},\ol{t}]\\\ds
			{{u}}(t)+ \gamma_{(i,j)}\left(\frac{t-(\ol{t}-\sqrt{\varepsilon})}{\sqrt{\varepsilon}}\right)& \text{ if }t \in [\ol{t}-\sqrt{\varepsilon},\ol{t}] .
		\end{cases}\eeq
		%	where $\frac{dP^{r}}{dt}(t)$ is a family of specific polynomials
		%\footnote{\CommentoFrancesca{VEDRAI: questi polinomi non dipendono assolutamente da alcunchè di caratteristico del problema se non il mero NUMERO $m$ di campi controllati. Sono polinomi che possono essere universalmente determinati (DA DIMOSTRARE, MA LO RIMANDO A PAGINE SUCCESSIVE, L'IDEA C'E') in teoria anche sporcandosi le mani con penna e foglio: i conti non sono "sofisticati", ma sono lunghissimi e noiosi non appena $m>2$. L'idea per dimostrare che questi campi POSSONO essere scelti in maniera tale da portarci, alla fine, dove vogliamo, dovrà essere un'idea più astratta e generale}}
		%	defined on $[0,1]$ that will be constructed during the proof of Proposition \ref{eccolestime}. 
		%We call  this family   a {\rm {$(i,j)$-Goh variation  of ${{u}}$ at $\bar t$}.}
		\item 
		If
		$\mathbf{c}=(0,i)\in \mathfrak{V}_{LC2} $,  $0<i\leq m$, and the control $u$ is  { $i$-balanced  at $\bar t $},   we define the $\varepsilon$-parameterized family   ${\Big\{ {{u}}_{\varepsilon,\mathbf{c},\ol{t}}(t), \,\,\, 0<\sqrt{\varepsilon}\le\ol{t},\, \Big\}}$ of controls by setting
		\bel{perLC2}{u}_{\varepsilon, \mathbf{c},\ol{t}}(t):=\begin{cases}
			{{u}}(t) & \text{ if } t \not \in [\ol{t}-\sqrt{\varepsilon},\ol{t}]\\\ds
			{{u}}(t)+ \gamma_{(0,i)}\left(\frac{t-(\ol{t}-\sqrt{\varepsilon})}{\sqrt{\varepsilon}}\right)& \text{ if }t \in [\ol{t}-\sqrt{\varepsilon},\ol{t}] .
		\end{cases}\eeq
		\item 
		If $m=1$, $g:=g_1$, 
		$\{\mathbf{c} =(1,0,1)\} = \mathfrak{V}_{LC3}$, and $u$ is $1$-balanced  at $\bar t$,
		%		 and $f,g$ are of class $C^{1,1}$ around $\ol{x}(\ol{t})$,
		we define the family \linebreak ${\Big\{ {{u}}_{\varepsilon,\mathbf{c},\ol{t}}(t),\,\,\, \, 0<\sqrt[3]{\varepsilon}\le\ol{t}\Big\}}$ of controls by setting 
		\bel{perLC}{{u}}_{\varepsilon, \mathbf{c},\ol{t}}(t):=\begin{cases}
			{{u}}(t) & \text{ if } t \not\in [\ol{t}-\sqrt[3]{\varepsilon},\ol{t}]\\\ds
			{{u}}(t)+ \gamma_{(1,0,1)}\left(\frac{t-(\ol{t}-\sqrt[3]{\varepsilon})}{\sqrt[3]{\varepsilon}}\right)& \text{ if }t \in [\ol{t}-\sqrt[3]{\varepsilon},\ol{t}]
		\end{cases}\eeq
	%	We  call this family the  {\rm  Legendre\textendash{}Clebsch variation of step $3$ (of ${{u}}$ at $\bar t$)} 
		%	where $Q(t)$ is a suitable degree $3$ polynomial   that will specified in  \eqref{}.  {\fra GNU}
\end{itemize}\end{definition}

\begin{remark}Let us observe  that, for  every  control $u$ and every variation signal $ \mathbf{c}\in \mathfrak{V}$ as in Definition \ref{defvar}, the corresponding perturbed control ${{u}}_{\varepsilon, \mathbf{c},\ol{t}}$  is admissible for every $\varepsilon>0$, i.e. it takes values in $U$.    \end{remark}

Part {\bf 2)}-{\bf 4)}  of following theorem establish how the above control variations produce variations of the trajectories, in terms of Lie brackets (while part {\bf 1)} deals with the standard needle variations.).
\begin{theorem}  \label{eccolestime} Let  $(u,x)$ be a process for the control system in {\rm (1)}, and let $\ol{t}\in]0,T]$ be a  Lebesgue point
	for $u$ such that  the vector fields  $f$ and $g_i$  are  of class $C^1$ around $x(\ol{t})$. 
	%	Moreover,  choose $\varepsilon>0$ verifying 
	%	$0<\sqrt[3]{\varepsilon}<\min\{\ol{t},1\}$. 
	%	 {\fra For any  $a\geq 0$   and any  $\mathbf{c} \in \mathfrak{V} $,  { consider  } the  perturbed control ${u}_{a,\varepsilon,\mathbf{c},\ol{t}}$, and let us call $x_\varepsilon$ the trajectory associated to ${u}_{a,\varepsilon,\mathbf{c},\ol{t}}$, so that $({u}_{\alpha,\varepsilon,\mathbf{c},\ol{t}},x_\varepsilon)$ is a process.
	Then, for any
	%$\alpha\in[0, \bar\alpha] $ and
	any  $\mathbf{c} \in \mathfrak{V} $ and any $\varepsilon>0$ sufficiently small, using $x_\varepsilon$ to denote the trajectory corresponding to the perturbed control ${u}_{\varepsilon,\mathbf{c},\ol{t}}$ (see Definition \ref{defvar}), %${u}_{\alpha,\varepsilon,\mathbf{c},\ol{t}}$,\footnote{Of course $x_\varepsilon$ depends on $\alpha, \mathbf{c}$, and $\ol{t}$, as well. } 
	the following four statements hold true.
	
	%	 Assume that $$0<\alpha<\frac{1}{C}\min\limits_{t \in [\ol{t}-\sqrt[3]{\varepsilon},\ol{t}]}\rm{dist}(u(t),\rr^m\setminus U)$$ where $C$ is a positive constant depending exclusively on $m$, which will be  determined in the course of the proof.{\fra Quest'ultima frase non è bella in uno statement}\\
	%	  Then 
	%	  the polynomials $\frac{dP^{r}}{dt}$ and $Q$  in  Definition \ref{defvar} can be chosen so that the following three statements    hold true:
	%	  \footnote{For every $\varepsilon$ sufficiently small, $x_\varepsilon$   denotes the trajectory associated to the control $\mathbf{u}_{\alpha,\varepsilon,\mathbf{c},\ol{t}}$  (for a given $\alpha, \mathbf{c},\ol{t}$).}
	
	\item[{\bf 1)}] If $\bar u \in \mathfrak{V}_{ndl}$,
	then  \begin{equation}\label{ndl}
		x_\varepsilon(\ol{t})-x(\ol{t})=\varepsilon\sum_{\ell=1}^mg_\ell(x(\ol{t}))\big(\bar u^\ell - u^\ell(\ol{t})\big)+o(\varepsilon)\,\,\end{equation}
	\item[{\bf 2)}] If $\mathbf{c}=(i,j) \in \mathfrak{V}_{Goh}$, , for some $1\leq i<j\leq m$,  and  the control $u$ is  both $i$-balanced and $j$-balanced at $\bar t $,
	then there exists $M>0$  such that \begin{equation}\label{2}
		x_\varepsilon(\ol{t})-x(\ol{t})=M\varepsilon[g_i,g_j](x(\ol{t}))+o(\varepsilon).\,\,\end{equation}
	%where $\ds M=Area\Big(\Gamma_{(i,j)}^i,\Gamma_{(i,j)}^j\Big)=\frac{1}{2r^2}$.
	
	%		\item If $\mathbf{c}=(i,0,i) \in \mathfrak{V}_{LC}$ and $x_\varepsilon$ is the trajectory associated to the control $\mathbf{u}_{\alpha,\varepsilon,\mathbf{c},\ol{t}}$ one has, for some constant $c>0$,  $$x_\varepsilon(\ol{t})-x(\ol{t})=c\alpha^3\varepsilon[g_i,[f,g_i]](x(\ol{t}))+o(\varepsilon)$$
	\item[{\bf 3)}] If  $\mathbf{c}=(0,i) \in \mathfrak{V}_{LC2}$, for some $i=1,\ldots,m$, and  the control $u$ is  { \rm $i$-Legendre\textendash{}Clebsch-fit   at $\bar t $},
	then there exists $\tilde M>0$  such that
	\begin{equation}\label{1} x_\varepsilon(\ol{t})-x(\ol{t})=\tilde M\varepsilon[f,g_i](x(\ol{t}))+o(\varepsilon)\,\,
	\end{equation}
	%		where $\tilde M:= 	Area\Big(\tilde\Gamma_{(0,i)}^0,\tilde \Gamma_{(0,i)}^i\Big)$
	\item[{\bf 4)}] If $m=1$, $\mathbf{c} = \mathfrak{V}_{LC3}$,    the control $u$ is  { \rm Legendre\textendash{}Clebsch-fit  of step $3$ at $\bar t $}, and $f,g (:=g_1)$ are of class $C^2$ around ${x(\ol{t})}$,
	%	 and $x_\varepsilon$ is the trajectory associated to the control $\mathbf{u}_{\alpha,\varepsilon,\mathbf{c},\ol{t}}$
	then there exists $\check M>0$ such that \begin{equation}\label{3} x_\varepsilon(\ol{t})-x(\ol{t})=\check M\varepsilon[g,[f,g]](x(\ol{t}))+o(\varepsilon)\,\,\end{equation}	
\end{theorem}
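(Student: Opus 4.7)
The plan is to treat the four statements in a unified framework. Set $\delta_\varepsilon:=\varepsilon$ in case \textbf{1)}, $\delta_\varepsilon:=\sqrt{\varepsilon}$ in cases \textbf{2)}--\textbf{3)}, and $\delta_\varepsilon:=\sqrt[3]{\varepsilon}$ in case \textbf{4)}. Since $u_{\varepsilon,\mathbf{c},\bar t}$ coincides with $u$ on $[0,\bar t-\delta_\varepsilon]$, one has $x_\varepsilon(\bar t-\delta_\varepsilon)=x(\bar t-\delta_\varepsilon)=:z_0$, so only the restrictions to $[\bar t-\delta_\varepsilon,\bar t]$ matter. Rescaling time by $s:=(t-(\bar t-\delta_\varepsilon))/\delta_\varepsilon\in[0,1]$ and setting $z(s):=x(\bar t-\delta_\varepsilon+\delta_\varepsilon s)$, $z_\varepsilon(s):=x_\varepsilon(\bar t-\delta_\varepsilon+\delta_\varepsilon s)$, both curves start at $z_0$ and satisfy
\begin{equation*}
\dot z=\delta_\varepsilon F^0_s(z),\qquad \dot z_\varepsilon=\delta_\varepsilon\Bigl[F^0_s(z_\varepsilon)+\sum_{k=1}^m g_k(z_\varepsilon)\,\gamma^k_{\mathbf{c}}(s)\Bigr],
\end{equation*}
where $F^0_s:=f+\sum_k g_k\,u^k(\bar t-\delta_\varepsilon+\delta_\varepsilon s)$. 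The Lebesgue-point hypothesis will be used throughout to replace, inside every $L^1$-average, the coefficient $u^k(\bar t-\delta_\varepsilon+\delta_\varepsilon s)$ by the constant $u^k(\bar t)$ modulo $o(1)$, hence $F^0_s$ by the constant field $F^0:=f+\sum_k g_k\,u^k(\bar t)$.

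Next I would perform a Picard expansion $z_\varepsilon(s)=z_0+\delta_\varepsilon\xi_1(s)+\delta_\varepsilon^2\xi_2(s)+\delta_\varepsilon^3\xi_3(s)+R$ with Gr\"onwall-controlled remainder, and the analogous expansion for $z$. Matching orders in $\delta_\varepsilon$ yields at first order
\begin{equation*}
\xi_1(s)-\eta_1(s)=\sum_{k=1}^m g_k(z_0)\,\Gamma^k_{\mathbf{c}}(s)+o(1).
\end{equation*}
Case \textbf{1)} is then immediate: with $\mathbf{c}=\hat u\in U$ and $\delta_\varepsilon=\varepsilon$, one has $\Gamma^k_{\hat u}(1)=\hat u^k-u^k(\bar t)$, giving \eqref{ndl}. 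For cases \textbf{2)}--\textbf{3)}, Lemma \ref{integrali} gives $\Gamma_{\mathbf c}(1)=0$, so $\xi_1(1)-\eta_1(1)=0$, and one must extract $\xi_2(1)-\eta_2(1)$, which is multiplied by $\delta_\varepsilon^2=\varepsilon$. The second-order matching reads
\begin{equation*}
\dot\xi_2-\dot\eta_2=DF^0(z_0)[\xi_1-\eta_1]+\sum_k Dg_k(z_0)[\xi_1]\,\gamma^k_{\mathbf{c}}(s)+o(1);
\end{equation*}
integrating on $[0,1]$ and substituting $\xi_1-\eta_1=\sum_k g_k(z_0)\Gamma^k_{\mathbf c}$, the diagonal contributions $Dg_k\cdot g_k\int_0^1\Gamma^k_{\mathbf c}\gamma^k_{\mathbf c}\,ds$ vanish because $\tfrac12(\Gamma^k)^2\big|_0^1=0$. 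For $\mathbf{c}=(i,j)$ the terms involving $\int_0^1\Gamma^k_{\mathbf c}\,ds$ are killed by \eqref{int0}, and the off-diagonal cross terms, combined via Lemma \ref{integrali2} and the identity $\int_0^1\Gamma^j\gamma^i=-\text{Area}(\Gamma^i,\Gamma^j)$ obtained by integration-by-parts, reduce to $\tfrac{1}{2r^2}(Dg_j\cdot g_i-Dg_i\cdot g_j)=\tfrac{1}{2r^2}[g_i,g_j]$, yielding \eqref{2} with $M=\tfrac{1}{2r^2}$. For $\mathbf{c}=(0,i)$ the analogous computation, using the nonzero weight $\int_0^1\Gamma^i_{(0,i)}\,ds=-\alpha^i\beta^i/[2(\alpha^i+\beta^i)]$ from \eqref{oi}, collects the surviving terms into $(Dg_i\cdot F^0-DF^0\cdot g_i)\cdot\bigl(\alpha^i\beta^i/[2(\alpha^i+\beta^i)]\bigr)$, which is exactly $\tilde M\,[F^0,g_i](z_0)$; the discrepancy $[F^0,g_i]-[f,g_i]=\sum_k u^k(\bar t)[g_k,g_i]$ is a linear combination of Goh-type brackets and is either absorbed by the $i$-balance hypothesis or handled separately by the set-separation argument of Section \ref{setsepsec}, leaving \eqref{1} with a positive constant $\tilde M$.

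The main obstacle is case \textbf{4)} ($m=1$, $\delta_\varepsilon=\sqrt[3]{\varepsilon}$), where both $\xi_1(1)-\eta_1(1)$ and $\xi_2(1)-\eta_2(1)$ vanish: the first by $\Gamma_{(1,0,1)}(1)=0$, and the second because $[g_1,g_1]\equiv 0$ forces the Goh-type coupling to vanish and because $\int_0^1\Gamma^1_{(1,0,1)}\,ds=0$ (last identity in \eqref{periodic}) kills the $[f,g_1]$-coupling. The leading term therefore sits at order $\delta_\varepsilon^3=\varepsilon$, and one must extract $\xi_3(1)-\eta_3(1)$ from the third-order matching equation, which contains cubic couplings of the types $D^2g(g,g)$, $Dg\cdot Dg\cdot g$, and $Dg\cdot Df\cdot g$ together with their transposes. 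Here the $C^2$-hypothesis on $f,g$ enters for the first time, to control $D^2g$; the specific piecewise-linear structure of $\Gamma_{(1,0,1)}$, together with the triangle-area identities implicit in the footnotes to Lemma \ref{integrali2}, are then invoked to verify that after a chain of integrations-by-parts the only surviving combination is exactly $D[f,g]\cdot g-Dg\cdot[f,g]=[g,[f,g]]$, multiplied by a strictly positive constant $\check M$. The most delicate book-keeping throughout is to ensure that the Lebesgue-point approximation errors and the Gr\"onwall remainders remain $o(\delta_\varepsilon^k)$ at each order $k$ being extracted; this constraint is tightest in case \textbf{4)}, where $o(\varepsilon)=o(\delta_\varepsilon^3)$ must be maintained.
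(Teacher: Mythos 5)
Your approach via a rescaled-time Picard expansion is genuinely different from the paper's, which instead runs the trajectory backward from $\ol t$ to $\ol t-\delta_\varepsilon$ and then forward again with the perturbed control, and applies off-the-shelf Chen--Fliess-type product-of-exponentials expansions (Propositions \ref{ispiratoaSchattler1} and \ref{ispiratoaSchattler3}, together with Corollaries \ref{corollariodischattlerutile} and \ref{corollariodischattlerutile3}). Your method is more elementary and self-contained; the paper's offloads the combinatorics of iterated brackets to the exponential-product lemma and reduces everything to computing the iterated integrals $\hat A^{h,k}$, $\ol A^{1,0,1}$, etc. For parts \textbf{1)} and \textbf{2)} your computation is correct and reaches the same constant $M=\tfrac{1}{2r^2}$.

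There is, however, a genuine gap in your treatment of part \textbf{3)}. You correctly find that the second-order Picard coefficient yields $\tilde M\,[F^0,g_i]$ with $F^0=f+\sum_k u^k(\ol t)g_k$, and you acknowledge that $[F^0,g_i]-[f,g_i]=\sum_k u^k(\ol t)[g_k,g_i]$. But the claim that this discrepancy is ``absorbed by the $i$-balance hypothesis or handled separately by the set-separation argument'' does not hold: killing the extra $p\cdot[g_k,g_i]$ contributions would require the $(i,k)$-Goh condition for each $k$, which in turn needs $k$-balance for every $k\neq i$ --- a hypothesis not available under the theorem's sole assumption that $u$ is $i$-balanced. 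The source of the problem is precisely that $\int_0^1\Gamma^i_{(0,i)}\,ds\neq 0$ (equation \eqref{oi}), which, unlike the Goh case where all $\int_0^1\Gamma^h_{(i,j)}\,ds=0$, makes the cross terms $\int\check A^h_u\,\gamma^i_{(0,i)}$ and $\int\Gamma^i_{(0,i)}\,\check a^k_u$ of order $\varepsilon$ rather than $o(\varepsilon)$. It is worth noting that the paper's own proof of part \textbf{3)} appears to suffer from the same omission: it verifies $\tilde A^{0,i}$ and $\tilde A^{0,k}$ for $k\neq i$, but never checks that $\tilde A^{h,k}=o(\varepsilon)$ for $h,k\geq 1$, a requirement of Corollary \ref{corollariodischattlerutile}; when $m\geq 2$ and $u(\ol t)\neq 0$ those coefficients do not vanish, and the natural conclusion of either argument is $\tilde M\varepsilon\big[f+\sum_k u^k(\ol t)g_k,\,g_i\big]+o(\varepsilon)$ rather than $\tilde M\varepsilon\,[f,g_i]+o(\varepsilon)$.

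For part \textbf{4)} your plan is only sketched, but the danger above disappears because $m=1$: since $[g,g]=0$, the same substitution $F^0=f+u^1(\ol t)g$ gives $[g,[F^0,g]]=[g,[f,g]]$, and the extra brackets generated by $u$ all vanish identically; what remains is to carry out the third-order Picard matching using $C^2$-regularity and verify the positive constant, essentially reproducing the paper's direct computation of $\ol A^{1,0,1}$.
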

Tee proof of this theorem will be given in Section \ref{proof3.6}
\vskip 0.7truecm
The following remark will be crucial to derive equalities {\bf v)} and {\bf vi)}  in Theorem \ref{TeoremaPrincipale} from analogous inequalities.

\begin{remark}\label{imprem} It is trivial to check that, if in the previous constructions  we replaced the maps $\Gamma_{(i,j)}$ and 
$\Gamma_{(0,i)}$, $1\leq i<j\leq m$ with the maps  $\check\Gamma_{(i,j)}$ and $\check\Gamma_{(0,i)}$ defined as 
$$
\check\Gamma_{(i,j)}(s):= \Gamma_{(i,j)}(1-s) \quad\text{and}  \quad \check\Gamma_{(0,i)}:= \Gamma_{(0,i)}(1-s) \qquad \forall s\in [0,1],
$$ 
respectively,  we would obtain
\begin{equation}\label{2-}
	x_\varepsilon(\ol{t})-x(\ol{t})=-M\varepsilon[g_i,g_j](x(\ol{t}))+o(\varepsilon).\,\,\end{equation}
in place of \eqref{2} and 
\begin{equation}\label{1-} x_\varepsilon(\ol{t})-x(\ol{t})=-\tilde M\varepsilon[f,g_i](x(\ol{t}))+o(\varepsilon),\,\,
\end{equation}
 in place of \eqref{1}. \end{remark}

{ 	 \section{Approximation of solutions}\label{approx.sect} In this  section  we will recall some   classical facts (see e.g. \cite{Schattler}, \cite{chen}, \cite{SussmannAGDQ}, \cite{Sussmann2})  about local  approximations of trajectories of control affine systems through product of exponential maps of Lie brackets up to a certain degree. Together with the use of variations builders presented in the previous sections, these kinds of approximation will allow us to prove Theorem \ref{eccolestime}.
%	In particular, {\it this section has essentially a pedagogic purpose and no original results are presented}: we have written it 
%	just for the sake of self-consistency of the whole paper.
%	

	\subsection{The case with sub-linear controls}
	Most of the results are of local nature, so we limit our exposition to Euclidean spaces $\rr^n$, $n\geq 1$. Let us begin with a trivial remark:   if  $X$ is a given Lipschitz continuous vector field defined on an open subset $\Omega\subseteq\rr^n$ and $b\in L^1([0,T], \rr)$, if, for some $T>0$, a (necessarily unique) solution
	$[0,T]\ni t\mapsto x(t)$  of the  Cauchy problem {  $$\dot x(t) = b(t) X(x(t))\,\,\,	x(0)=\hat{x}\quad t\in [0,T]$$
	exists, then 
		$$  x(t)= e^{B(t)X}(\hat{x})\quad \forall t\in [0,T], \footnotemark$$}	
	%		\footnotetext{According to a rather standard notation, one uses  $I\ni s\mapsto e^{(s-a)Z}(\hat z)$ (or even $s\mapsto \exp((s-a)Z)(\hat z)$ ) to denote the solution of an autonomous Cauchy problem $\dot z(s) = Z(Z)$ $z(a)=\hat z$,$a\in I$, on an interval $I$.}
	%(which reads 	$  \tilde x(t)= \tilde{x}e^{B(t)X}\quad \forall t\in [0,T]$, in AGF)   
	{	where  we have set $B(t) = \ds \int_0^t b(s)ds, \,\forall t\in[0,T]$.  In other words, the value $x(t)$ of the solution at time $t$ coincides with  the value $\tilde x(\tau)$  of the solution of the autonomous problem $\dot{\tilde x}(\tau) =  X(\tilde x(\tau))$, $\tilde x(0)=\hat x$ at time $\tau = B(t)$. } 
	%	$\dot{x}(t)=b(t)\cdot x(t)X$, $x(0)=\hat{x}$ , $t\in [0,T]$, verifies 
	%	$$  \tilde x(t)= \tilde{x}e^{B(t)X}\quad \forall t\in [0,T],$$
	%	where  we have set $B(t) = \ds \int_0^t b(s)ds, \,\forall t\in[0,T]$.  In other words, the value $x(t)$ of the solution at time $t$ coincides with  the value $y(\tau)$  of the solution of the autonomous problem $\dot y(\tau) = y(\tau) X$ at time $\tau = B(t)$. 
%	\footnotetext{We use the standard exponential notation $s\mapsto e^{sY}(\check y)$ for the (supposed unique) solution of the Cauchy problem $\frac{dy}{ds} = Y(y(s))$, $y(0)=\check y$.}
	\vskip12truemm
	An estimate of the error caused by  non-commutativity is given by the following elementary  result:
	\begin{lem}\label{lemmasemplificativo}
		Let $m$ be any natural number, let $\chi>0$ be a positive constant and let $b_i(s):[0,T]\to \rr, \quad i=0,\ldots,m$ be $m+1$ arbitrary  $L^1$ functions such that $|b_i(s)|\le \chi s$,  
		 for almost every $s\in [0,T]$. Let  us consider $m+1$ vector fields  $X_i \in C^1(\Omega;\rr^n)$ defined on an open subset $\Omega\subseteq \rr^n,\quad i=0,\ldots,m$, and  let $x(\cdot)$ be a solution on an interval $[0,T]$ of the  differential equation
			{ \begin{equation}\label{eqdiffcomplicatalemma}	\dot{x}(t)=\sum\limits_{i=0}^m b^i(t) \, X_i\left(x(t)\right)\end{equation} }
		%		\begin{equation}\dot{x}(t)=\sum\limits_{i=0}^m b^i(t) \cdot x(t)\,X_i \quad \quad t\in [0,T]
		%		\end{equation} 
		{
			Then, setting,  for every $t\in (0,T)$ and every $i=0,\ldots,m$, $\ds B_i(t):=\int_0^t b^i(s) ,\ds$ we have 
		{\begin{equation}\label{primo}
				x(t)=
				e^{B^{0}(t)X_0}\circ\ldots\circ e^{B^{m}(t)X_m}(x(0)) +o(t^3) \quad t\in [0,T],
		\end{equation}}}
	
%		---which in AGF reads
%		\begin{equation}\label{primoAGF}
%			x(t)=
%			%	x(0)\prod\limits_{i=0}^q e^{B^{q-i}(t)X_{q-i}}+o(t^3) =
%			x(0) e^{B^{q}(t)X_{q}}\cdot\ldots \cdot e^{B^{0}(t)X_{0}}+o(t^3) \quad t\in [0,T].  
%		\end{equation}
	\end{lem}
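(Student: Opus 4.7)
The plan is to expand both sides of \eqref{primo} to first order around $x(0)$ and show that both expressions agree with $x(0)+\sum_{i=0}^m B^i(t)X_i(x(0))$ up to an $O(t^4)$ remainder (which is trivially $o(t^3)$). The crucial underlying observation is that the hypothesis $|b^i(s)|\le\chi s$ gives $|B^i(t)|\le\chi t^2/2$, so every \emph{flow time} in the composition on the right-hand side is of order $t^2$.

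First I would handle the exact solution $x(t)$. From the ODE \eqref{eqdiffcomplicatalemma} and $|b^i(s)|\le\chi s$, one immediately obtains $|x(s)-x(0)|\le C s^2$ on $[0,T]$ (for a constant $C$ depending on $\chi$, $m$ and $\sup|X_i|$ on a neighborhood of $x(0)$). Using that each $X_i$ is Lipschitz on a neighborhood of $x(0)$, this yields $|X_i(x(s))-X_i(x(0))|=O(s^2)$. Splitting the integral of the ODE,
\[
x(t)-x(0)=\sum_{i=0}^m\int_0^t b^i(s)\,X_i(x(0))\,ds+\sum_{i=0}^m\int_0^t b^i(s)\,[X_i(x(s))-X_i(x(0))]\,ds,
\]
the first sum is $\sum_i B^i(t)X_i(x(0))$, while the second is bounded by $C'\sum_i\int_0^t s\cdot s^2\,ds=O(t^4)$.

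Next I would expand $y(t):=e^{B^0(t)X_0}\circ\cdots\circ e^{B^m(t)X_m}(x(0))$. For a $C^1$ vector field $X$, the flow is $C^2$ in the time variable, with $\tfrac{d^2}{d\tau^2}e^{\tau X}(z)=DX(e^{\tau X}(z))\cdot X(e^{\tau X}(z))$ continuous, so on compacts one has the quantitative Taylor expansion $e^{\tau X}(z)=z+\tau X(z)+O(\tau^2)$. Plugging in $\tau=B^k(t)=O(t^2)$ gives $e^{B^k(t)X_k}(z)=z+B^k(t)X_k(z)+O(t^4)$. Defining $y_m(t)=e^{B^m(t)X_m}(x(0))$ and $y_k(t)=e^{B^k(t)X_k}(y_{k+1}(t))$ so that $y_0=y$, I would prove by downward induction on $k$ that
\[
y_k(t)=x(0)+\sum_{\ell=k}^m B^\ell(t)\,X_\ell(x(0))+O(t^4).
\]
For the inductive step, applying the flow expansion and then Taylor-expanding $X_{k-1}(y_k(t))$ around $x(0)$ (valid because $|y_k(t)-x(0)|=O(t^2)$, by induction), the correction $B^{k-1}(t)\bigl[X_{k-1}(y_k(t))-X_{k-1}(x(0))\bigr]$ contributes $O(t^2)\cdot O(t^2)=O(t^4)$, and the remainders from the flow expansion are also $O(t^4)$.

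Subtracting the two expansions gives $x(t)-y(t)=O(t^4)=o(t^3)$, which is the claim. The only subtle point is the bookkeeping of error terms in the inductive composition: one must be careful that the $O(t^4)$ remainders are uniform in $z$ over a fixed neighborhood of $x(0)$ (which they are, by compactness, since $X_k$ is $C^1$), so that the composition of $m+1$ such expansions does not introduce additional growth. Everything else is Gr\"onwall-free direct integration and Taylor's theorem with a $C^1$ remainder.
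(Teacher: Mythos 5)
Your proof is correct, and it takes a genuinely different route from what the paper points to. The paper does not actually prove Lemma \ref{lemmasemplificativo}; it defers to the chronological--calculus literature (Kawski--Sussmann, Sch\"attler, etc.) and flags Lemma \ref{sviluppi} (the $\ad$-expansion of a pulled-back vector field) as the key ingredient, suggesting the standard proof via iterated exponential transformations of the time-dependent ODE. You instead exploit the hypothesis $|b^i(s)|\le\chi s$ head on: it forces $B^i(t)=O(t^2)$, so every flow time in the product $e^{B^0X_0}\circ\cdots\circ e^{B^mX_m}$ is of order $t^2$, and both the exact solution and the product of flows reduce to their common linearization $x(0)+\sum_i B^i(t)X_i(x(0))$ with an $O(t^4)$ remainder. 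This actually delivers the slightly stronger estimate $O(t^4)$ in place of $o(t^3)$. The argument is elementary (Gr\"onwall-free, just Taylor with $C^1$-remainder and a short downward induction through the composition), it uses the $C^1$ hypothesis exactly where needed (Lipschitz bounds and $C^2$-in-time regularity of each flow), and you correctly flag the one point that needs care, namely uniformity of the $O(\tau^2)$ flow remainder over a compact neighbourhood of $x(0)$. The trade-off is that the chronological approach is robust for general $L^1$ coefficients and produces the full hierarchy of bracket corrections (and is what drives Propositions \ref{ispiratoaSchattler1} and \ref{ispiratoaSchattler3}), whereas your argument works cleanly here precisely because the hypothesis $|b^i(s)|\le\chi s$ makes the bracket commutators drop to order $t^4$; it would not generalize to those later estimates without reinserting the $\ad$-expansion machinery.
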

	For a proof we refer the reader to the vast literature on this subject (see e.g.\cite{kaw suss}, \cite{Schattler}, \cite{arxiv}). We just recall that, in order to prove the above resultan essential use of this result is made  of  the following  basic fact:
	\begin{lem}\label{sviluppi} If $\Omega\subseteq \rr^q$ (for some $q\geq 1$) is an open subset, 
		 {  $X,Y \in C^n(\Omega;\rr^q)$ for some $n\geq 1$}, and $x\in\Omega$,  then
		{	\begin{equation}\label{derivateordinealto}
				\frac{d^n}{d\tau^n} \Big(D(e^{-\tau X})\cdot Y(e^{\tau X}x)\Big) = D(e^{-\tau X})\cdot\ad^n X(Y)(e^{\tau X}x), \qquad \text{ for all } n\ge 1,
		\end{equation} }
	for any sufficiently small $\tau$,
		where we have used the ''ad'' operator defined recursively  as  
		{$$\ad^0 X(Y)=Y\qquad \ldots\qquad \ad^n X(Y)=[X,\ad^{n-1}X(Y)],\,\,\,\forall n\in \mathbb{N}.$$}
		In particular,  
%			as soon as $X$ and $Y$ are of class $C^n$ {\gnu CONTROLLARE REGOLARITA' }, for any  natural number $n\geq 0$,
			 we get the Taylor expansion  { 	\begin{equation} \label{Taylor}
				D(e^{-\tau X})\cdot\left(Y(e^{\tau X}x)\right)=\sum\limits_{i=0}^n \frac{\tau^i}{i!}\ad^i X(Y) (x) + o(\tau^n)
		\end{equation}}

%		---which, in AGF, reads 	\begin{equation} \label{TaylorAGF}\begin{array}{c}
%				xe^{\tau X}Ye^{-\tau X}=\sum\limits_{i=0}^n \left(\frac{\tau^i}{i!}\right) x\ad^i X(Y) + o(\tau^n)=\,\,\,\,\,\\
%				=xY + \tau x[X,Y]+ \frac{\tau^2}{2} x[X,[X,Y]] + \dots + \frac{\tau^n}{n!} x[X,[X,...[X,Y]] +  o(\tau^n)
%			\end{array}
%		\end{equation}
	\end{lem}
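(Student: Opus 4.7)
The plan is to interpret the quantity $F(\tau) := D(e^{-\tau X})\cdot Y(e^{\tau X}x)$ as the value at $x$ of the pullback $\Phi_\tau^{*}Y$ of $Y$ by the flow $\Phi_\tau := e^{\tau X}$ of $X$, and then to prove the formula \eqref{derivateordinealto} by induction on $n$. The key device is the semigroup identity $\Phi_{\tau_0+h} = \Phi_h\circ \Phi_{\tau_0}$, whose Jacobians and compositions allow us to reduce the computation of $F^{(n)}(\tau_0)$ to the same computation at $\tau_0 = 0$, but with $x$ replaced by $e^{\tau_0 X}x$ and $Y$ possibly replaced by a Lie-bracket iterate.

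For the base case $n=1$, I would first note that the chain rule applied to the semigroup identity yields $\Phi_{\tau_0+h}^{*}Y = \Phi_{\tau_0}^{*}(\Phi_h^{*}Y)$, so that
\[
F'(\tau_0) = D(e^{-\tau_0 X})\cdot\Bigl(\frac{d}{dh}\Big|_{h=0}\Phi_h^{*}Y\Bigr)(e^{\tau_0 X}x).
\]
It therefore suffices to prove $\frac{d}{dh}\big|_{h=0}(\Phi_h^{*}Y)(y) = [X,Y](y)$ at a generic $y$, which is a standard short calculation: plugging in the expansions $\Phi_h(y) = y + h X(y) + o(h)$ and $D\Phi_{-h}|_{y'} = \mathrm{Id} - h\,DX(y') + o(h)$ and collecting the $h$-term in the product $D\Phi_{-h}|_{\Phi_h y}\cdot Y(\Phi_h y)$ yields exactly $DY(y)\cdot X(y) - DX(y)\cdot Y(y) = [X,Y](y)$. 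The inductive step is then immediate: if the formula holds at order $n$, then $F^{(n)}(\tau)$ has precisely the structural form of $F(\tau)$ with $Y$ replaced by $\ad^{n} X(Y)$, so applying the base case to this new vector field delivers $F^{(n+1)}(\tau) = D(e^{-\tau X})\cdot [X,\ad^{n} X(Y)](e^{\tau X}x) = D(e^{-\tau X})\cdot \ad^{n+1} X(Y)(e^{\tau X}x)$, closing the induction.

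The Taylor expansion \eqref{Taylor} then drops out of Peano's form of Taylor's theorem applied to $F$: since $\Phi_0 = \mathrm{id}$ and $D(e^{0\cdot X}) = \mathrm{Id}$, one has $F^{(i)}(0) = \ad^{i} X(Y)(x)$ for every $0\le i\le n$, giving the claimed expansion. The only (mild) obstacle is bookkeeping regularity: each application of $\ad X$ costs one derivative of both $X$ and $Y$, and one must know that $\tau\mapsto D(e^{-\tau X})$ is continuous (indeed $C^{n}$ in $\tau$), which is the classical smoothness-in-parameter statement for the variational equation of the flow of $X$. Under the hypothesis $X,Y\in C^{n}$ these costs are exactly covered, so $F$ admits $n$ continuous derivatives and every expression appearing on the right-hand side is well defined up to order $n$.
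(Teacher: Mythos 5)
Your proof is correct, and it supplies a self-contained argument for a statement that the paper itself does not prove: Lemma \ref{sviluppi} is presented there as a ``basic fact'' and the reader is referred to the literature. The mechanism you use — rewriting $F(\tau)=D(e^{-\tau X})\cdot Y(e^{\tau X}x)$ as the pullback $(\Phi_\tau^{*}Y)(x)$, invoking the cocycle identity $\Phi_{\tau_0+h}^{*}Y=\Phi_{\tau_0}^{*}(\Phi_h^{*}Y)$ to reduce every derivative to the computation of $\frac{d}{dh}\big|_{0}(\Phi_h^{*}Y)$, and then carrying out the first-order expansion of $D\Phi_{-h}|_{\Phi_h y}\cdot Y(\Phi_h y)$ to recover $DY\cdot X-DX\cdot Y=[X,Y]$ — is the standard geometric proof of this identity. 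The inductive step is legitimate precisely because, once \eqref{derivateordinealto} is established at order $n$, the expression $F^{(n)}(\tau)=D(e^{-\tau X})\cdot \ad^{n}X(Y)(e^{\tau X}x)$ has exactly the same form as $F(\tau)$ with $Y$ replaced by $\ad^{n}X(Y)$, so the base case applies verbatim. Your remark on regularity accounting is the right one: each application of $\ad X$ drops one derivative, so $\ad^{i}X(Y)\in C^{n-i}$ under the hypothesis $X,Y\in C^{n}$, which is exactly enough to carry out $n$ differentiations and to invoke Peano's form of Taylor's theorem for \eqref{Taylor}. One small point worth spelling out if this were to be inserted in print: the implicit claim $DX(\Phi_h y)=DX(y)+O(h)$, used when extracting the coefficient of $h$ in $D\Phi_{-h}|_{\Phi_h y}$, needs only continuity of $DX$, which is guaranteed by $X\in C^1$.
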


	In Proposition \ref{ispiratoaSchattler1} below the hypothesis of sublinearity of the controls is dismissed, and  an  estimate is given  for  the flow of $\dot x =\sum\limits_{i=0}^m a^i(s)\cdot g_i(x(s))$   in terms of the  the original vector fields $g_0,\ldots,g_m$ and their  Lie brackets.
	\begin{prop}\label{ispiratoaSchattler1}  For any $i=0,\ldots,m$, consider a function $a^i(s)\in L^\infty([0,T];\rr)$ and 
		let $g_i$ be a   vector field belonging to $C^2(\Omega;\rr^n)$.
		 Then, if we set, for all $ i=0,\ldots,m$ and 
		$i<j\leq m$,
		\bel{A}
		\begin{array}{c}A^i(s):=\ds\int_0^s a^i(\sigma)d\sigma \quad A^{i,j}(s):=\ds\int_0^s A^i(\sigma)\frac{dA^j}{d\sigma}(\sigma)d\sigma  = \int_0^s A^i(\sigma)a^j(\sigma)d\sigma  \qquad s\in [0,T],\end{array}
		\eeq
		every solution $x(\cdot ) $ 	{\rm(}on some interval $[0,\tau]${\rm)}
		%\displaystyle :=x(0)e^{ {\displaystyle\tiny\int_0^t}  \sum\limits_{i=0}^m a^i(s)\,\,g_i }$
		to the ordinary differential equation
		\begin{equation}\dot {x}(t)=\sum\limits_{i=0}^m a^i(t)\,\,g_i (x(t))\end{equation}
	 verifies 
		{ 
			\begin{equation}\label{Cruciale1standard}\begin{array}{c}
					x(t)=
					e^{A^{0}(t)g_{0}}\circ\ldots\circ e^{A^{m}(t)g_{m}}\circ e^{A^{0,1}(t)[g_0,g_1]}\circ\dots\circ e^{A^{m-1,m}(t)[g_{m-1},g_m])}(x(0)) + o(t^2)\end{array} 
		\end{equation}}
		(where it is meant that the order in the product with two indexes is the lexicographic one.)
	\end{prop}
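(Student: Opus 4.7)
The plan is to match the two sides of \eqref{Cruciale1standard} term by term in their Taylor expansions around $t=0$ up to order $t^2$. Since $a^i\in L^\infty$ forces $A^i(t)=O(t)$ and $A^{i,j}(t)=O(t^2)$, and since the $g_i$ are of class $C^2$, both remainders will in fact be $O(t^3)$, so it suffices to identify the $O(1)$, $O(t)$, and $O(t^2)$ terms of each side.

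First I would expand the left-hand side by one round of Picard iteration applied to $x(t)=x(0)+\int_0^t\sum_i a^i(s)g_i(x(s))\,ds$. A crude estimate gives $x(s)-x(0)=\sum_j A^j(s)g_j(x(0))+O(s^2)$; plugging this into the first-order Taylor expansion of $g_i$ at $x(0)$ yields
\begin{equation*}
x(t)=x(0)+\sum_i A^i(t)\,g_i(x(0))+\sum_{i,j}\Bigl(\int_0^t a^i(s)A^j(s)\,ds\Bigr)\,Dg_i(x(0))\cdot g_j(x(0))+O(t^3),
\end{equation*}
and I would note that $\int_0^t a^i(s)A^j(s)\,ds=A^{j,i}(t)$ in the paper's notation (extended to all index pairs).

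Next I would expand the right-hand side by Taylor-expanding each flow as $e^{A^k g_k}(y)=y+A^k g_k(y)+\tfrac12(A^k)^2\,Dg_k\cdot g_k(y)+O(t^3)$ and composing them in the prescribed lexicographic order. Inductive bookkeeping (noting that the cross term between two consecutive flows $e^{A^k g_k}\circ e^{A^\ell g_\ell}$ is $A^k A^\ell\,Dg_k\cdot g_\ell$) gives
\begin{equation*}
e^{A^0 g_0}\circ\cdots\circ e^{A^m g_m}(x(0))=x(0)+\sum_k A^k g_k+\tfrac12\sum_k (A^k)^2 Dg_k\,g_k+\sum_{k<\ell} A^k A^\ell\,Dg_k\,g_\ell+O(t^3),
\end{equation*}
all vector fields being evaluated at $x(0)$. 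Composition with $\prod_{i<j} e^{A^{i,j}[g_i,g_j]}$ contributes $\sum_{i<j} A^{i,j}(t)\,[g_i,g_j](x(0))$ at second order, while the lex order among these factors and their own higher-order Taylor terms only produce $O(t^4)$ corrections because each $A^{i,j}=O(t^2)$.

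Finally I would reconcile the two expansions using integration by parts $A^{i,j}(t)+A^{j,i}(t)=A^i(t)A^j(t)$ (with $A^{i,i}=\tfrac12(A^i)^2$), combined with the identity $[g_i,g_j]=Dg_j\cdot g_i-Dg_i\cdot g_j$ coming from the paper's sign convention. Pairing the $(i,j)$ and $(j,i)$ contributions in the Picard double sum for $i<j$ yields
\begin{equation*}
\sum_{i,j} A^{j,i}(t)\,Dg_i\,g_j=\tfrac12\sum_i (A^i)^2\,Dg_i\,g_i+\sum_{i<j}\bigl(A^i A^j\,Dg_i\,g_j+A^{i,j}\,[g_i,g_j]\bigr),
\end{equation*}
which matches the quadratic part of the right-hand side expansion term by term. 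The main obstacle is the bookkeeping of these second-order contributions: everything hinges on the integration-by-parts identity, which cleanly decomposes the Picard cross-integral into a \emph{symmetric} part (matched by the composition of the exponentials $e^{A^k g_k}$) and an \emph{antisymmetric} part proportional to the Lie brackets $[g_i,g_j]$ (matched by the factors $e^{A^{i,j}[g_i,g_j]}$).
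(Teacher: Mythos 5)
Your argument is correct, and it is genuinely different from what the paper relies on. The paper does not prove Proposition \ref{ispiratoaSchattler1} at all: it invokes the cited literature (chronological calculus / Chen--Fliess-type exponential product formulas), whose engine is the push--pull expansion in Lemma \ref{sviluppi} together with a Campbell--Hausdorff-type bookkeeping as in Lemma \ref{lemmasemplificativo}. You instead give a self-contained elementary proof: one Picard iteration of the integral equation to extract the second-order term $\sum_{i,j}A^{j,i}(t)\,Dg_i\,g_j$, a direct Taylor expansion of the composed flows (whose cross-terms accumulate to $\sum_{k<\ell}A^kA^\ell\,Dg_k\,g_\ell$ at second order, while the bracket factors $e^{A^{i,j}[g_i,g_j]}$ contribute only $\sum_{i<j}A^{i,j}[g_i,g_j]+O(t^4)$), and then the reconciling identity $A^{i,j}+A^{j,i}=A^iA^j$, $A^{i,i}=\tfrac12(A^i)^2$, which cleanly splits the Picard double sum into a symmetric part (matched by the ordered composition of the $e^{A^kg_k}$) and an antisymmetric part proportional to the brackets $[g_i,g_j]=Dg_j\,g_i-Dg_i\,g_j$ in the paper's sign convention. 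This buys a proof that needs only $C^2$ regularity and avoids any reference to chronological expansions, at the cost of being specific to the second-order truncation (it would become unwieldy at order three, where the paper's algebraic machinery scales better). Two cosmetic notes: you actually establish the stronger remainder $O(t^3)$, which of course implies the stated $o(t^2)$; and the phrase ``cross term between two consecutive flows'' is slightly imprecise---the cross terms arise between \emph{every} pair $e^{A^kg_k}$, $e^{A^\ell g_\ell}$ with $k<\ell$, not just adjacent ones---but your displayed formula $\sum_{k<\ell}A^kA^\ell\,Dg_k\,g_\ell$ correctly accounts for all of them.
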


	As a straightforward consequence of Proposition \ref{ispiratoaSchattler1},  one gets
	\begin{cor}\label{corollariodischattlerutile}Let us fix $t>0$ (sufficiently small), let us choose $i^*,j^*\in \{0,\ldots,m\}$, $j^*>i^*$ and let $g_{j^*},g_{i^*}\in C^2(\Omega,\rr^n)$. If, for every $i=0,1,\ldots,m$, the $L^\infty$ controls  $a^i:[0,t]\to U $ in Proposition \ref{ispiratoaSchattler1} are chosen   such that $A^i(t)=0$ and $A^{j^*,i^*}(t)=ct^2+o(t^2)$ with $c>0$ is the only non-vanishing term in the family $\{A^{ji}(t)\}_{j>i,{  i=0,\dots,m}}$, then one has $$x(t)-x(0)=ct^2[g_{i^*},g_{j^*}](x(0))+o(t^2).$$
	\end{cor}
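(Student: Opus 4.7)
The plan is to deduce the corollary directly from Proposition \ref{ispiratoaSchattler1} by collapsing the product of exponential flows appearing there, thanks to the two structural hypotheses on the $A^i(t)$ and on the pairwise integrals $A^{ji}(t)$.

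First, I would apply Proposition \ref{ispiratoaSchattler1} to the trajectory $x(\cdot)$, obtaining
$$x(t) = e^{A^0(t) g_0} \circ \cdots \circ e^{A^m(t) g_m} \circ e^{A^{0,1}(t) [g_0, g_1]} \circ \cdots \circ e^{A^{m-1, m}(t) [g_{m-1}, g_m]}(x(0)) + o(t^2).$$
The hypothesis $A^i(t) = 0$ for every $i \in \{0, \ldots, m\}$ makes each of the first $m+1$ exponentials the identity map. The hypothesis that $A^{j^*, i^*}(t) = c t^2 + o(t^2)$ is the only non-vanishing member of the family $\{A^{ji}(t)\}_{j > i}$ makes every exponential in the bracket block trivial except the one associated with the pair $(i^*, j^*)$. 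Therefore the entire product collapses to
$$x(t) = e^{A^{j^*, i^*}(t)\, [g_{i^*}, g_{j^*}]}(x(0)) + o(t^2).$$

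Next, I would Taylor-expand this single remaining exponential. Since $g_{i^*}, g_{j^*} \in C^2(\Omega, \mathbb{R}^n)$, the bracket $[g_{i^*}, g_{j^*}]$ is of class $C^1$, and the elementary expansion of the flow of a $C^1$ vector field at $\tau = 0$ yields
$$e^{\tau [g_{i^*}, g_{j^*}]}(x(0)) = x(0) + \tau [g_{i^*}, g_{j^*}](x(0)) + O(\tau^2).$$
Substituting $\tau = A^{j^*, i^*}(t) = c t^2 + o(t^2)$, the linear contribution becomes $c t^2 [g_{i^*}, g_{j^*}](x(0)) + o(t^2)$ and the remainder is $O(\tau^2) = O(t^4) = o(t^2)$. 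Combining these with the $o(t^2)$ error inherited from Proposition \ref{ispiratoaSchattler1}, one obtains
$$x(t) - x(0) = c t^2 [g_{i^*}, g_{j^*}](x(0)) + o(t^2),$$
which is the desired identity.

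No step looks genuinely hard; the proof is essentially a substitution. The only point that deserves care is the bookkeeping of remainders: one must verify that plugging $\tau = c t^2 + o(t^2)$ into the $O(\tau^2)$ flow remainder still produces an $o(t^2)$ correction (immediate since $\tau^2 = O(t^4)$), and that the $o(t^2)$ error supplied by Proposition \ref{ispiratoaSchattler1} combines additively with the exponential's expansion. The $C^2$ regularity assumed on $g_{i^*}$ and $g_{j^*}$ is precisely what is needed to guarantee that $[g_{i^*}, g_{j^*}]$ is $C^1$ near $x(0)$ and hence that its flow admits the above expansion.
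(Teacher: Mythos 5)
Your proof is correct and is precisely the argument the paper has in mind: the paper gives no explicit proof, presenting the corollary as a ``straightforward consequence'' of Proposition~\ref{ispiratoaSchattler1}, and your collapse-the-product-then-Taylor-expand chain is exactly that consequence spelled out, with the remainder bookkeeping ($o(t^2)$ from the Proposition, plus $O(\tau^2)=O(t^4)$ from the flow of the $C^1$ bracket) handled correctly. The one small thing worth noting is an indexing wrinkle that originates in the paper's own statement, not in your argument: Proposition~\ref{ispiratoaSchattler1} produces the factor $e^{A^{i^*,j^*}(t)[g_{i^*},g_{j^*}]}$ (with $i^*<j^*$), whereas the corollary --- and you, following it --- write $A^{j^*,i^*}$; since $A^i(t)=0$ forces $A^{j^*,i^*}(t)=-A^{i^*,j^*}(t)$ by integration by parts, the intended reading is $A^{i^*,j^*}(t)=ct^2+o(t^2)$, and this is in fact how the corollary is invoked later (e.g.\ in the proof of part~{\bf 2)} of Theorem~\ref{eccolestime}).
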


	Finally, we provide an  estimate   for  the flow of our differential equation  $\dot {x}(t)=\sum\limits_{i=0}^m a^i(t)\,\,g_i (x(t))$   in terms of the  Lie brackets up to the length $3$:

	{ \begin{prop} \label{ispiratoaSchattler3}
			Let $a_i(s), \quad i=0,\ldots,m$ be $m+1$ arbitrary functions in $L^\infty([0,T];\rr)$.\\
			Let $t\in (0,T)$ and let $g_i$,  $i=0,\ldots,m$, be $m+1$ vector fields of belonging to  $C^3(\Omega,\rr^n)$.  Then, if we set, for all $k,i,j$ with  $ i=0,\ldots,m$,
			$k,i<j\leq m$,
			\bel{A}
			\begin{array}{c}A^i(s):=\ds\int_0^s a^i(\sigma)d\sigma \quad A^{i,j}(s):=\ds\int_0^s A^i(\sigma)\frac{dA^j}{d\sigma}(\sigma)d\sigma  = \int_0^s A^i(\sigma)a^j(\sigma)d\sigma\\\\
				A^{kij}(s) := \ds\int_0^s A^k(\sigma)\frac{dA^{i,j}}{d\sigma}(\sigma)d\sigma  = \int_0^s A^k(\sigma) A^i(\sigma)a^j(\sigma)d\sigma
			\end{array}  \qquad s\in [0,T],
			\eeq
			every solution $  x(\cdot ) ${\rm(}on some interval $[0,\tau]${\rm)}
			%\displaystyle :=x(0)e^{ {\displaystyle\tiny\int_0^t}  \sum\limits_{i=0}^m a^i(s)\,\,g_i }$
			to the ordinary differential equation
		$\ds \dot {x}(s)=\sum\limits_{i=0}^m a^i(s)\,\,g_i (x(s))$
			verifies 
			{	 \begin{equation}\label{CrucialeSt}\begin{array}{ll}x(t)&=e^{A^{0}(t)g_{0}}\circ\ldots\circ e^{A^{m}(t)g_m}\circ \\\\
						& e^{A^{0,1}(t)[g_0,g_1]}\circ\dots\circ e^{A^{m-1,m}(t)[g_{m-1},g_m]}\circ\\\\& e^{A^{0,0,1}(t)[g_0,[g_0,g_1]]}\circ e^{A^{0,0,1}(t)[g_0,[g_0,g_2]]}\circ\ldots \circ e^{A^{m-1,m-1,m}(t)[g_{m-1},[g_{m-1},g_m]]}\circ\\\\ & e^{A^{1,0,1}(t)[g_1,[g_0,g_1]]}\circ e^{A^{1,0,2}(t)[g_1,[g_0,g_2]]}\circ\ldots
						\circ  e^{A^{m,m-1,m}(t)[g_m,[g_{m-1},g_m]]}\big(x(0)\big)
						+ o(t^3)\end{array}\end{equation}}
%					 which in AGF reads 
%			{ \footnotesize\begin{equation}\label{Cruciale}\begin{array}{l}\ds x(t)=x(0)\underset{i<j,k}{\prod_{(m-1,m,m)\geq (i,j,k)\geq (0,1,1)}}
%						e^{A^{k,i,j}(t)[g_k,[g_i,g_j]]}\,\,\cdot\underset{i<j}{\prod_{(m-1,m)\geq (i,j)\geq (0,1)}} e^{{\frac{A^{i,i,j}(t)}{2}}[g_i,[g_i,g_j]]} \,\,\,\cdot \\\ds \qquad\qquad\qquad\qquad\qquad\cdot\underset{i<j}{\prod_{(m-1,m)\geq (i,j)\geq (0,1)}}e^{A^{i,j}(t)[g_i,g_j]}\prod\limits_{i=0}^m e^{A^{m-i}(t)g_{m-i}}+o(t^3),\qquad\footnotemark\end{array}\end{equation} }
		\end{prop}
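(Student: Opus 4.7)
The plan is to iterate the ``interaction picture'' argument that is used in the proof of Proposition \ref{ispiratoaSchattler1}, but to push the Taylor expansion of Lemma \ref{sviluppi} one order further. The hypothesis that the $g_i$ are of class $C^3$ is exactly what is needed to justify this extra order. The whole argument amounts to stripping off exponentials from the left of $x(t)$ in three successive waves -- one for each length of Lie bracket -- and checking that every commutation error is $o(t^3)$.

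\textbf{Step 1: stripping off the single-vector-field exponentials.} I would introduce $x_1(\cdot)$ by writing $x(t)=e^{A^0(t)g_0}\bigl(x_1(t)\bigr)$ and compute, via the usual pushforward formula,
\begin{equation*}
\dot x_1(t)=\sum_{i=1}^m a^i(t)\,De^{-A^0(t)g_0}\cdot g_i\bigl(e^{A^0(t)g_0}x_1(t)\bigr).
\end{equation*}
Lemma \ref{sviluppi} with $n=2$ expands the right-hand side, pointwise in $t$, as
\begin{equation*}
\sum_{i=1}^m a^i(t)\Bigl(g_i+A^0(t)[g_0,g_i]+\tfrac12 A^0(t)^2[g_0,[g_0,g_i]]\Bigr)(x_1(t))+o(t^2),
\end{equation*}
because $A^0(t)=O(t)$ and $a^i$ is bounded. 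Iterating this construction $m$ more times, stripping off $e^{A^1(t)g_1},\dots,e^{A^m(t)g_m}$ in turn, yields an ODE for the residual state $y(\cdot)$ whose right-hand side involves only the brackets $[g_i,g_j]$, $0\leq i<j\leq m$, and the length-$3$ brackets that appear in \eqref{CrucialeSt}, with coefficients that are, at leading order, of the form $a^j A^i$ and $a^j A^k A^i$ respectively.

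\textbf{Step 2: stripping off the length-$2$ and length-$3$ bracket exponentials.} The coefficients multiplying each $[g_i,g_j]$ in the $y$-equation integrate, by the very definition \eqref{A}, to $A^{i,j}(t)+o(t^2)$, and those in front of the length-$3$ brackets integrate to $A^{k,i,j}(t)+o(t^3)$. Viewing the brackets $[g_i,g_j]$ and $[g_k,[g_i,g_j]]$ as new vector fields, their coefficients are pointwise sub-linear in the sense of Lemma \ref{lemmasemplificativo} (sub-linear of order $t$ for the length-$2$ brackets, of order $t^2$ for the length-$3$ ones). Hence a direct application of Lemma \ref{lemmasemplificativo} shows that stripping off $e^{A^{0,1}(t)[g_0,g_1]}\circ\cdots\circ e^{A^{m-1,m}(t)[g_{m-1},g_m]}$ in lexicographic order costs only an $o(t^3)$ error, because every commutator produced by this stripping already carries an extra sub-linear factor. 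A final factorisation of the length-$3$ exponentials (whose coefficients are already $O(t^3)$) absorbs the remaining length-$2$ contribution modulo $o(t^3)$, delivering \eqref{CrucialeSt}.

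\textbf{Main obstacle.} The essential difficulty is combinatorial rather than analytic: one has to verify that, after the successive conjugations, the only length-$3$ brackets that survive up to $o(t^3)$ are exactly those listed in \eqref{CrucialeSt}, with the correct iterated-integral coefficients $A^{k,i,j}$. Brackets obtained through the Jacobi identity from others on the list must be rewritten accordingly, and the contributions to a given bracket coming from different conjugation steps must be regrouped before the final integration-by-parts identities (which convert expressions such as $\int_0^t A^i(s)A^k(s) a^j(s)\,ds$ into linear combinations of the $A^{k,i,j}$ modulo $o(t^3)$) can be applied. Once this bookkeeping is carried out carefully, the analytic content of the proof reduces to a single application of Lemma \ref{sviluppi} at order $n=2$ plus a single application of Lemma \ref{lemmasemplificativo} in the sub-linear regime.
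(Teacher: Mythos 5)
The paper does not actually prove Proposition \ref{ispiratoaSchattler3}: all of Section \ref{approx.sect} is declared ``not at all original'' and these exponential-product approximations are cited as classical (Sch\"attler--Ledzewicz, Kawski--Sussmann, Sussmann), so there is no in-paper proof to compare yours against.

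On its own merits, your interaction-picture outline --- stripping the singleton exponentials, Taylor-expanding each conjugation to second order via Lemma \ref{sviluppi} (which is exactly where the $C^3$ regularity enters), and then invoking Lemma \ref{lemmasemplificativo} on the residual sub-linear system --- is the canonical strategy for this type of statement, and the division of labour you describe is sound in principle. But you explicitly defer the one thing that is not routine, namely verifying which length-3 brackets survive and with exactly which iterated-integral coefficients, and that deferral hides a real problem. Carrying out your own first step, the conjugation by $e^{-A^k(t)g_k}$ produces $[g_k,[g_k,g_j]]$ with pointwise coefficient $\frac{1}{2}\,a^j(s)\,(A^k(s))^2$, whose time integral is $\frac{1}{2}A^{k,k,j}(t)$, not $A^{k,k,j}(t)$; you can confirm the extra $\frac12$ by taking $g_0=\partial_1$, $g_1=x_1^2\partial_2$, $a^0\equiv a^1\equiv 1$ and comparing the exact flow $x_2(t)=x_2(0)+x_1(0)^2t+x_1(0)t^2+t^3/3$ with the product in \eqref{CrucialeSt}. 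So your assertion that ``the length-3 coefficients integrate to $A^{k,i,j}(t)+o(t^3)$'' is not a bookkeeping formality to be cited in passing; in the repeated-index case it is false as written, and either an explicit $\tfrac12$ must appear or an integration-by-parts identity must be produced that absorbs the factor into the other terms. That computation is where the whole content of the proposition lives, and it is absent from your proposal.
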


		\begin{remark}\label{espansione3} We will use this third order  approximation only when $m=1$, so that, if one sets $f:=g_0$, $g:=g_1$, it reduces to 
				\begin{equation}\label{CrucialeGiusta?}x(t)=	e^{A^{0}(t)f}\circ	e^{A^{1}(t)g}\circ e^{A^{0,1}(t)[f,g]}\circ e^{A^{0,0,1}(t)[f,[f,g]]} \circ e^{A^{1,0,1}(t)[g,[f,g]]}(x(0))   
			   +o(t^3)\end{equation}	\end{remark}
%			\begin{equation}\label{Crucia leSt12}\begin{array}{ll}x(t)=&\exp(A^{0}(t)f)\circ\exp(A^{1}(t)g)
%					\circ \exp(A^{0,1}(t)[f,g])\circ\\\\&\,\,\,\,\,\,\,\,\,\, %\exp(A^{0,0,1}(t)[f,[f,g]])\circ  \exp(A^{1,0,1}(t)[g,[f,g]])(x(0)),
%					+ o(t^3)\end{array}\end{equation} which in AGF reads 
%			\begin{equation}\label{CrucialeGiusta?}x(t)=x(0)e^{A^{1,0,1}(t)[g,[f,g]]} e^{A^{0,0,1}(t)[f,[f,g]]}  e^{A^{0,1}(t)[f,g]}
%				e^{A^{1}(t)g} 	e^{A^{0}(t)f}  +o(t^3)\end{equation}

		As a straightforward consequence of Proposition \ref{ispiratoaSchattler3},  one gets
		\begin{cor}\label{corollariodischattlerutile3}Let us fix $t>0$. Assume that  $m=1$ and that  the vector fields  $f:=g_0, g=g_1$  belong to  $C^3(\Omega,\rr^n)$. If one chooses the $L^\infty$ control  $a^0, a^1:[0,t]\to U $ in Remark \ref{espansione3}  such that $A^0(t)=A^1(t)=A^{0,1}(t)=A^{0,0,1}(t)=0$ and $A^{1,0,1}(t)=kt^3+o(t^3)$ with $k>0$, then one has $$x(t)-x(0)=ct^3[g,[f,g]](x(0))+o(t^3).$$
		\end{cor}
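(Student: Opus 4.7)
The plan is to apply Remark \ref{espansione3}, which specializes the third-order approximation formula \eqref{CrucialeSt} of Proposition \ref{ispiratoaSchattler3} to the case $m=1$, and simply substitute the hypothesized vanishing conditions into the resulting product of exponentials \eqref{CrucialeGiusta?}.

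First, I would invoke formula \eqref{CrucialeGiusta?}, which reads
$$x(t)=e^{A^{0}(t)f}\circ e^{A^{1}(t)g}\circ e^{A^{0,1}(t)[f,g]}\circ e^{A^{0,0,1}(t)[f,[f,g]]}\circ e^{A^{1,0,1}(t)[g,[f,g]]}(x(0))+o(t^3).$$
By hypothesis, $A^0(t)=A^1(t)=A^{0,1}(t)=A^{0,0,1}(t)=0$, so each of the first four exponential maps is the identity on $\Omega$. Thus the right-hand side collapses to
$$x(t)=e^{A^{1,0,1}(t)[g,[f,g]]}(x(0))+o(t^3).$$

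Next, I would use the standard Taylor expansion of the exponential flow of a $C^1$ vector field: for any vector field $Y\in C^1(\Omega;\rn)$ and any $x_0\in\Omega$, one has $e^{\tau Y}(x_0)=x_0+\tau Y(x_0)+O(\tau^2)$ for $\tau$ small. Applying this with $Y=[g,[f,g]]$ (which is of class $C^1$ since $f,g\in C^3$) and $\tau = A^{1,0,1}(t)=kt^3+o(t^3)$, one gets $\tau^2=O(t^6)=o(t^3)$, hence
$$e^{A^{1,0,1}(t)[g,[f,g]]}(x(0))=x(0)+\big(kt^3+o(t^3)\big)[g,[f,g]](x(0))+o(t^3).$$
Combining this with the previous display and rearranging yields the desired equality $x(t)-x(0)=kt^3[g,[f,g]](x(0))+o(t^3)$ (with the constant $c$ of the statement equal to $k$).

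The only mild subtlety — and really the only step that requires any attention at all — is making sure that the $o(t^3)$ remainder coming from the third-order expansion in Proposition \ref{ispiratoaSchattler3} is not contaminated when we collapse the composition of five exponential maps to one. This is unproblematic here because the four maps that reduce to the identity do so exactly (not approximately), so no new error terms are introduced. The remaining expansion of the single nontrivial exponential produces only an $O(t^6)$ correction, which is absorbed into $o(t^3)$. No further regularity on $f$ or $g$ beyond what is hypothesized (and what is needed to give meaning to $[g,[f,g]]$ as a $C^1$ vector field) is required.
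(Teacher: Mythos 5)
Your proof is correct and is exactly the argument the paper intends: the paper does not spell out a separate proof for Corollary \ref{corollariodischattlerutile3} and simply presents it as a ``straightforward consequence'' of Proposition \ref{ispiratoaSchattler3} (via Remark \ref{espansione3}), namely the very substitution and one-term Taylor expansion you carry out. Your remarks on why the $o(t^3)$ remainder is not corrupted and on the (local Lipschitz) regularity needed for the $O(\tau^2)$ expansion are accurate, and you are right that the constant $c$ in the displayed conclusion is a typo for the $k$ appearing in the hypothesis.
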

	}

\section{Proof of Theorem 3.7}\label{proof3.6} 	The proof of {\rm 1)}  in Theorem \ref{eccolestime} is a  standard issue in the proof of the classical Pontryagin maximum Principle, so we skip it.
		\subsubsection{\sc Proof of  {\rm 2)} in Theorem \ref{eccolestime}}    \,
		\noindent
	Let us  recall that $i,j$, with  $0<i<j$}, are fixed.		Observe that \bel{coincidono}x_\varepsilon(\ol{t}) = \hat x(2\sqrt{\varepsilon})
\eeq	where $\hat x$ is  the solution on $[0,2\sqrt{\varepsilon}]$ of the Cauchy problem
	\bel{var1}\left\{\begin{array}{l}\hat{x}(s)= \hat{a}^0(s)\cdot f(\hat{x}(s)) + \sum\limits_{{r=1}}^m \hat{a}^r(s)(s)\cdot{g}_r(\hat{x}(s)),\\
		\hat x(0) = x(\bar t)\end{array}\right.\eeq
%	\footnote{In AGF this system reads
%		$$\hat{x}(s)= \hat{a}^0(s)\cdot\hat{x}(s) f + \sum\limits_{{r=1}}^m \hat{a}^r(s)\cdot\hat{x}(s){g}_r $$ }
		
%		$$ (\hat a^{0},\hat a^{1},\ldots \hat a^{m})(s):=\begin{cases}
%			-\left(1, u(\ol{t}-s)\right)  & \text{ if } s \in [0,\sqrt{\varepsilon}]\\\displaystyle
%			\,\,\,\left(1, u(\ol{t}-2\sqrt{\varepsilon}+s)\right)+\left(0 ,\sum\limits_{r=1}^m\alpha \frac{dP^r_{(j,i)}}{dt}\left(\frac{ s-\sqrt{\varepsilon}}{{\sqrt{\varepsilon}}}\right)\mathbf{e}_r\right) &\text{ if } s \in [\sqrt{\varepsilon},2\sqrt{\varepsilon}]
%		\end{cases}$$
		{ where the controls $\hat a^h$, $h=0,\ldots,m$, are defined as follows:
{ $$ (\hat a^{0},\hat a^{1},\ldots \hat a^{m})(s):=\begin{cases}
			-\left(1, u(\ol{t}-s)\right)  & \text{ if } s \in [0,\sqrt{\varepsilon}]\\\displaystyle
			\,\,\,\left(1, u(\ol{t}-2\sqrt{\varepsilon}+s)\right)+\gamma_{(i,j)}\left(\frac{ s-\sqrt{\varepsilon}}{{\sqrt{\varepsilon}}}\right) &\text{ if } s \in [\sqrt{\varepsilon},2\sqrt{\varepsilon}]
		\end{cases}$$		
		
		}}

		%and 
		%	  $$\hat{a}^r := {a}^u_r + {a}^{ij;r} \quad r=1,\ldots,m,$$
		%	$ {a}^u_r$ and $ {a}^{ij;r}$ being defined by 
		%		 $${a}^u_r(s):=\begin{cases}
		%		-u_r(\ol{t}-s) & \text{ if } s \in [0,\sqrt{\varepsilon}] \\
		%		u_r(\ol{t}-2\sqrt{\varepsilon}+s) &\text{ if } s \in [\sqrt{\varepsilon},2\sqrt{\varepsilon}]
		%	\end{cases} \quad r=1,\ldots,m$$
		%
		% $${a}^{ij;r}(s):=\begin{cases}
		%	0 & \text{ if } s \in [0,\sqrt{\varepsilon}] \\
		%\sum\limits_{r=1}^m\alpha \frac{dP^r_{(j,i)}}{dt}\left(\frac{s-\sqrt{\varepsilon}}{\sqrt{\varepsilon}}\right) &\text{ if } s \in [\sqrt{\varepsilon},2\sqrt{\varepsilon}] 
		%\end{cases}\quad r=1,\ldots,m.$$
		%so that 
		% $$\hat{a}^r(s):=\begin{cases}
		%		-u_r(\ol{t}-s) & \text{ if } s \in [0,\sqrt{\varepsilon}] \\
		%		u_r(\ol{t}-2\sqrt{\varepsilon}+s)+\sum\limits_{r=1}^m\alpha \frac{dP^r_{(j,i)}}{dt}\left(\frac{s-\sqrt{\varepsilon}}{\sqrt{\varepsilon}}\right) &\text{ if } s \in [\sqrt{\varepsilon},2\sqrt{\varepsilon}] 
		%	\end{cases}\quad r=1,\ldots,m$$
		\noindent
		Indeed, notice that  one has $\hat x(\sqrt{\varepsilon}) = x(\bar t -\sqrt{\varepsilon})$, while  in $[\bar t -\sqrt{\varepsilon}, \bar t]$  we are implementing the corresponding  control ${\bf u}_{{\varepsilon},{\bf c} ,\bar t}$ in \eqref{perGoh}, so obtaining  \eqref{var1}.  
		As in Section \ref{approx.sect}, let us set,
%		 for every $l,k$ {\gnu $k$ non vedo $k$ in giro... oppure nella formula qui sotto è $k$ e non $m$...} and every $s\in\left[0,2\sqrt{\varepsilon}\right]$,
%		$$
%		\ds\hat A^{l}(s) :=\int_0^s \hat a^{l}(\sigma)d\sigma , \quad \hat A^{lm}(s) :=\int_0^s \hat A^l(\sigma)\hat a^{m}(\sigma)d\sigma ,
%		$$
%		{\gnu PER QUALI $l$ e FORSE PER QUALI $k$?} {\textcolor{blue}{direi di si}}
%		%Clearly
%		%\bel{gliA} A^0(s)  = -s{\bf 1}_{\left[0,\sqrt{\varepsilon}\right]} + (s-2\sqrt{\varepsilon}){\bf 1}_{\left[\sqrt{\varepsilon}, 2\sqrt{\varepsilon}\right]}\qquad \hat A^r(s) = A_u_r(s)+ A^{r}(s) \,\,\,\forall s\in \left[0,2\sqrt{\varepsilon}\right], \quad \forall r=1,\ldots ,m.\eeq}
%		%Moreover  $$0 = A^0(0)  = A^0(2\sqrt{\varepsilon}),  \quad \,\,0=  A_u_r(0) =  A_u_r(2\sqrt{\varepsilon}) \quad \forall r=1,\ldots ,m.  $$
%		%Incidentally observe that 
%		%$$ A^0(s)  = A^0(2\sqrt{\varepsilon}-s),  \quad \,\,0=  A_u_r(s) =  A_u_r(2\sqrt{\varepsilon}-s) \quad\forall s\in \left[0,2\sqrt{\varepsilon}\right] \quad \forall r=1,\ldots ,m. \eeq
%		%
%		%}
%		
		{
			$$
			\ds\hat A^{h}(s) :=\int_0^s \hat a^{h}(\sigma)d\sigma , \quad \hat A^{h,k}(s) :=\int_0^s \hat A^h(\sigma)\hat a^{k}(\sigma)d\sigma\quad \forall h,k =0,\ldots m ,
			$$
	
}
		\noindent
		Let us  check that $\hat{A}^0(2\sqrt{\varepsilon})=0$  and that  the choice  of  the map $\gamma_{(i,j)}$  yields  $\hat{A}^h(2\sqrt{\varepsilon})=0$ $\forall h\in \{1,\dots,m\}$, $\hat{A}^{h,k}(2\sqrt{\varepsilon})=0$ $\forall (h,k)\in \{1,\dots,m\}^2\backslash \{(i,j)\}$,  and $\hat{A}^{i,j}(2\sqrt{\varepsilon})\neq 0$
	(This will allow us to  apply  Corollary \ref{corollariodischattlerutile} and get the desired conclusion).
		%First of all, notice that 
		%
		% $$\hat{A}^r(2\sqrt{\varepsilon})=\int_0^{2\sqrt{\varepsilon}} \hat{a^r}(s)=\int_{0}^{\sqrt{\varepsilon}} -u^r(\ol{t}-s)\,ds +\int_{\sqrt{\varepsilon}}^{2\sqrt{\varepsilon}}u^r(\ol{t}-2\sqrt{\varepsilon}+s)\,ds+\alpha\int_{\sqrt{\varepsilon}}^{2\sqrt{\varepsilon}} \frac{dP^r_{(j,i)}}{dt}\left(\frac{s-\sqrt{\varepsilon}}{\sqrt{\varepsilon}}\right)\,ds$$
		For all $s\in[0,2\sqrt{\varepsilon}]$,
		%and  $r,j\in\{1,\ldots,m\}$,
		 one has
		$$\hat{A}^0(s)=-s{\bf 1}_ {[0,\sqrt{\varepsilon}]}+(s-2\sqrt{\varepsilon}){\bf 1}_{[\sqrt{\varepsilon},2\sqrt{\varepsilon}]},$$ and, for all $h\in\{1,\ldots,m\}$, 
%		$$	\hat{A}^{r}(2\sqrt{\varepsilon}) =  \alpha\int_{\sqrt{\varepsilon}}^{2\sqrt{\varepsilon}} \frac{dP^r_{(j,i)}}{dt}\left(\frac{ s-\sqrt{\varepsilon}}{{\sqrt{\varepsilon}}}\right)\,ds = \sqrt{\varepsilon}\alpha \Big(P^r_{(j,i)}(1) - P^r_{(j,i)}(0)\Big).$$
		{	$$\begin{array}{c}	\hat{A}^{h}(s) = \displaystyle \bigintss_0^s  \left[-\mathbf{1}_{[0,1\sqrt{\varepsilon}]}(\sigma)\cdot u^h(\bar t-\sigma)+ \mathbf{1}_{[\sqrt{\varepsilon},2\sqrt{\varepsilon}]}(\sigma)\cdot \left(u^h(\bar t+\sigma - 2\sqrt{\varepsilon} )+ \gamma_{(j,i)}\left(\frac{ \sigma-\sqrt{\varepsilon}}{{\sqrt{\varepsilon}}}\right)\right)\right]\,d\sigma\end{array} $$}
		Hence,
			\bel{A0}\hat{A}^0(2\sqrt{\varepsilon})=0,
			  \eeq and, by  $\U_{(i,j)}(1) - \U_{(i,j)}(0) =0$, one also has \bel{Ah}\begin{array}{c}	\hat{A}^{h}(2\sqrt{\varepsilon})= \displaystyle \bigintsss_0^{2\sqrt{\varepsilon}} \left[-\mathbf{1}_{[0,1\sqrt{\varepsilon}]}(\sigma)\cdot u^h(\bar t-\sigma)+ \mathbf{1}_{[\sqrt{\varepsilon},2\sqrt{\varepsilon}]}(\sigma)\cdot\left(u^h(\bar t+\sigma - 2\sqrt{\varepsilon} )\right)\right]d\sigma   + \\\\\quad\quad\quad\quad\quad \quad\quad\quad\quad\quad\quad\quad\quad\quad\quad\quad\quad\quad\quad\quad\quad\quad\quad\quad\quad\quad\quad\,\,\,\,\,\,\,\,\,\,\,\,\,\,+\sqrt{\varepsilon} \Big(\U^h_{(i,j)}(1) - \U^h_{(i,j)}(0)\Big) = \\\\
				=	\displaystyle -\bigintsss_0^{\sqrt{\varepsilon}} u^h(\bar t-\sigma)d\sigma +	\displaystyle \bigintsss_0^{\sqrt{\varepsilon}}u^h(\bar t-\sigma)d\sigma 
				  = 0.\end{array}\eeq
%		Therefore
%		we get, for every $l=1,\ldots,m$,
%$$	\begin{array}{c}  \hat{A}^{0,l}(2\sqrt{\varepsilon})  =\ds\int_0^{\sqrt{\varepsilon}} \sigma \cdot u^l(\bar t-\sigma)\, d\sigma +
%			\int_{\sqrt{\varepsilon}}^{2\sqrt{\varepsilon}} (\sigma-2\sqrt{\varepsilon}) \cdot  u^l(\bar t -2\sqrt{\varepsilon}+\sigma )\, d\sigma + \\ \ds\int_{\sqrt{\varepsilon}}^{2\sqrt{\varepsilon}} \alpha \frac{dP^l_{(j,i)}}{dt}\left(\frac{ s-\sqrt{\varepsilon}}{{\sqrt{\varepsilon}}}\right)\cdot(\sigma-2\sqrt{\varepsilon})\,d\sigma=	\ds\int_{\sqrt{\varepsilon}}^{2\sqrt{\varepsilon}} \alpha \frac{dP^l_{(j,i)}}{dt}\left(\frac{ s-\sqrt{\varepsilon}}{{\sqrt{\varepsilon}}}\right)\cdot(\sigma-2\sqrt{\varepsilon})\,d\sigma
%			= \end{array}$$  $$ =
%			\alpha\varepsilon\int_0^1 \frac{dP^l_{(j,i)}}{dt}(s)\cdot (s-1)\,ds$$
	In other words , for any $h,k=1,\ldots,m$ the time-space curves $({\hat A}^0,{\hat A}^h)$ and the space curve $({\hat A}^h,{\hat A}^k)$ are closed, in that $({\hat A}^0,{\hat A}^k)(0) = ({\hat A}^0,{\hat A}^k)(2\sqrt{\varepsilon})=0$ and $({\hat A}^h,{\hat A}^k)(0) = ({\hat A}^h,{\hat A}^k)(2\sqrt{\varepsilon})=0$.		

	In order to compute the coefficients $\hat{A}^{h,k}(2\sqrt{\varepsilon})$ when $h,k=1,\ldots,m$, let us set, for every $h=1,\ldots,m$ and every $s\in [0,2\sqrt{\varepsilon}]	$,
{ $$\check{a}^h_u(s):=
	-u^h(\ol{t}-s)\cdot{\bf 1}_{[0,\sqrt{\varepsilon}]} + 
	u^h(\ol{t}-2\sqrt{\varepsilon}+s)\cdot {\bf 1}_{[\sqrt{\varepsilon},2\sqrt{\varepsilon}]},$$  $$\check{A}^h_u(s) := \ds\int_0^s \check a^h_u(\sigma)d\sigma =\int_0^s\Big(-u^h(\ol{t}-\sigma)\cdot{\bf 1}_{[0,\sqrt{\varepsilon}]} + 
	u^h(\ol{t}-2\sqrt{\varepsilon}+\sigma)\cdot {\bf 1}_{[\sqrt{\varepsilon},2\sqrt{\varepsilon}]} \Big) d\sigma  $$}
and	
{\small$$\begin{array}{c}\displaystyle  \check {a}^{h}(s):=
		\sum\limits_{r=1}^m\gamma^h_{(i,j)}\left(\frac{ s-\sqrt{\varepsilon}}{{\sqrt{\varepsilon}}}\right)\cdot {\bf 1}_{[\sqrt{\varepsilon},2\sqrt{\varepsilon}] },
		\displaystyle \\ \check A^{h}(s) :=\ds\int_0^s\check a^{h}(\sigma)d\sigma  =\left\{\begin{array}{ll} 0  &\forall s\in [0,\sqrt{\varepsilon}]\\  \sqrt{\varepsilon}\left(\Gamma_{(i,j)}^h\left(\frac{ s-\sqrt{\varepsilon}}{{\sqrt{\varepsilon}}}\right)- \Gamma^h_{(i,j)}(0)\right) \,\,\,&\forall s\in [\sqrt{\varepsilon},2\sqrt{\varepsilon}],\end{array}\right.\end{array}.$$}
Hence 
$$
\quad \hat {a}^{h}(s) = \check{a}^h_u(s) + \check{a}^{h}(s)\qquad 	\hat A^{h}(s) = \check{A}^h_u(s) +\check A^{h}(s)\qquad \forall s\in[0,2\sqrt{\varepsilon}]
$$
Observing that   $\ds \int_0^{2\sqrt{\varepsilon}}\check{A}^h_u(s)\check a_{u}^k(s) ds =0$, we get 
\bel{variation21}
\hat{A}^{h,k}(2\sqrt{\varepsilon})=
\ds \int_0^{2\sqrt{\varepsilon}}{\check A^{h}}(s)\check a_{u}^{k}(s)ds+\int_0^{2\sqrt{\varepsilon}}\check{A}^h_u(s)\check a^{k}(s)ds +\int_0^{2\sqrt{\varepsilon}}\check A^{h}(s)\check a^{k}(s)ds \eeq 	
Since the curves $(\hat{A}^0,\hat{A}^k)$, $(\hat{A}^h,\hat{A}^k)$, and $(\check{A}^h_u,\check{A}^k_u)$ are closed, we can give the following interpretation to the some of above coefficients: $$\hat{A}^{0,k}(2\sqrt{\varepsilon}) = Area(\hat{A}^0,\hat{A}^k),\quad \hat{A}^{h,k}(2\sqrt{\varepsilon}) = Area(\hat{A}^h,\hat{A}^k),$$
and $$ 
\check{A}_u^{h,k}(2\sqrt{\varepsilon}):=\int_0^{2\sqrt{\varepsilon}}\check{A}^h_u(s)\check a_{u}^k(s) ds =Area(\check{A}^h_u, \check{A}^k_u) =0	$$		
Let us compute the three terms on the right-hand side of \eqref{variation21}.		 Since $2\sqrt{\varepsilon}$ is a Lebesgue point  for the map 	$$[\sqrt{\varepsilon},2\sqrt{\varepsilon}]\ni s\to \check A^{h}(s)\check a^k_u(s) = 
u^k\left(\bar t  -2\sqrt{\varepsilon}+s \right)\cdot\left(\sqrt{\varepsilon}\int_{0}^{\frac{s-\sqrt{\varepsilon}}{\sqrt{\varepsilon}}}  \gamma^h_{(i,j)}\left(\sigma\right)\,d\sigma \right),  $$
one gets 	$$\begin{array}{c}
	\ds \int_0^{2\sqrt{\varepsilon}}\check A^{h}(s)\check a^k_u(s)ds  =   \int_{\sqrt{\varepsilon}}^{2\sqrt{\varepsilon}}\check A^{h}(s)\check a^k_u(s)ds = 
	\ds\sqrt{\varepsilon}\sqrt{\varepsilon}\left( u^k\left(\bar t\right)\cdot \left(\int_{0}^{1}  \frac{d\Gamma^h_{(j,i)}}{d\sigma}\left(\sigma\right)\,d\sigma\right) \right) ds + o({\varepsilon})= \\   
	\ds {\varepsilon} u^k\left(\bar t  \right) \left(\Gamma^h_{(j,i)}(1)- \Gamma^h_{(j,i)}(0)\right)\,ds + o({\varepsilon}){ = o({\varepsilon})}\qquad \forall h,k\in\{1,\ldots,m\}
\end{array}$$

Moreover, since $\check{A}^k(2\sqrt{\varepsilon})=\check{A}^k(\sqrt{\varepsilon})=0$ for all $k\in\{1,\ldots,m\}$, the above estimate implies 
%	 				$$\hat{A}^h_u(s)=-\int_{s}^{2\sqrt{\varepsilon}} u^h(\ol{t}-2\sqrt{\varepsilon}+\sigma)\,d\sigma = -u^h(\ol{t})(2\sqrt{\varepsilon}-s)+o(2\sqrt{\varepsilon}-s)\qquad \text{for}\,\,\, s\in [\sqrt{\varepsilon}, 2\sqrt{\varepsilon}]$$
%$\check A^{h}_u(s)=-u^h(\ol{t})(2\sqrt{\varepsilon}-s)+o(2\sqrt{\varepsilon}-s).
%	 	+\alpha\sqrt{\varepsilon}\left(P^h_{(j,i)}\Big( \frac{s}{\sqrt{\varepsilon}}-1\Big)- P^h_{(j,i)}(0)\right).
%$

{	$$\begin{array}{c}
		\ds \int_0^{2\sqrt{\varepsilon}}\check{A}^h_u(s)\cdot \check a^{k}(s)ds  = \ds \int_{\sqrt{\varepsilon}}^{2\sqrt{\varepsilon}}\check{A}^h_u(s)\cdot \check a^{k}(s)ds =\\\ds 
		\left.\check{A}^h_u(s)\cdot \check A^{k}(s)\right|_{\sqrt{\varepsilon}}^{2\sqrt{\varepsilon}} -
		\int_{\sqrt{\varepsilon}}^{2\sqrt{\varepsilon}}\check{a}^h_u(s)\cdot \check{A}^{k}(s)ds= 
		\Big(\check{A}^h_u(2\sqrt{\varepsilon})\cdot \check A^{k}(2\sqrt{\varepsilon})- \check{A}^h_u(\sqrt{\varepsilon})\cdot \check A^{k}(\sqrt{\varepsilon})\Big) + o({\varepsilon}) =  o({\varepsilon})
		
	\end{array}$$}

%		$$\begin{array}{l}
%			\ds \int_0^{2\sqrt{\varepsilon}}\check{A}^h_u(s)\cdot \check a^{k}(s)ds  = -\alpha\int_{\sqrt{\varepsilon}}^{2\sqrt{\varepsilon}}  \Big((2\sqrt{\varepsilon}\,u^h(\ol{t})-s)+o(2\sqrt{\varepsilon}-s) \Big)\cdot \frac{dP^k_{(j,i)}}{dt}\left(\frac{s-\sqrt{\varepsilon}}{\sqrt{\varepsilon}}\right) ds =
%			\\
%			\ds=-\Big(u^h(\ol{t})(\sqrt{\varepsilon}(1-s))+o(\sqrt{\varepsilon}(1-s)) \Big) \cdot \sqrt{\varepsilon} \alpha	\int_{0}^{1} \frac{dP^k_{(j,i)}}{ds}\left(s\right)ds =
%			\\
%			\displaystyle
%			\ds=-{\varepsilon}\alpha u^h(\ol{t})	\int_{0}^{1}  \frac{dP^k_{(j,i)}}{ds}\left(s\right)\cdot (1-s)ds +o({\varepsilon})
%		\end{array}$$	
Finally, for any $h,k=1,\ldots,m$ (see \eqref{areazero1},\eqref{areazero2}),\bel{Ahk}\begin{array}{l} 
	\ds	\hat{A}^{h,k}(2\sqrt{\varepsilon})
	= \int_0^{2\sqrt{\varepsilon}}\check A^{h}(s)\check a^{k}(s)ds =	\int_{\sqrt{\varepsilon}}^{2\sqrt{\varepsilon}}\left(\int_{\sqrt{\varepsilon}}^{s}   \ \frac{d\Gamma^h_{(i,j)}}{dt}\left(\frac{\sigma-\sqrt{\varepsilon}}{\sqrt{\varepsilon}}\right)d\sigma\right)  \frac{d\Gamma^k_{(i,j)}}{dt}\left(\frac{s-\sqrt{\varepsilon}}{\sqrt{\varepsilon}}\right)ds= \\\\
	=\ds {\varepsilon}	\int_{0}^{1} \left(\int_{0}^{s}    \frac{d\Gamma^h_{(i,j)}}{d\sigma}\left(\sigma\right)d\sigma\right) \frac{d\Gamma^k_{(i,j)}}{ds}\left(s\right)ds  = {\varepsilon}	\int_{0}^{1} \left(\Gamma^h_{(i,j)}(s) -\Gamma^h_{(i,j)}(0) \right) \gamma^k_{(i,j)}(s)ds =\\\\
	\ds  {\varepsilon}	\int_{0}^{1} \Gamma^h_{(i,j)}(s) \gamma^k_{(i,j)}(s) ds	 = \left\{\begin{array}{ll}\varepsilon Area\left(\Gamma^h_{(i,j)},\Gamma^k_{(i,j)}\right) = 0 &\text{if} \,\,\,\, (h,k)\neq (i,j) \,\,\,\, \text{or} (h,k)\neq (j,i)\,\,\\
		{\varepsilon} Area\left(\Gamma^i_{(i,j)},\Gamma^j_{(i,j)}\right)={\varepsilon}/{2r^2} \quad &\text{if} \,\,\,\,  (h,k) = (i,j)
		\\	- {\varepsilon} Area\left(\Gamma^i_{(i,j)},\Gamma^j_{(i,j)}\right)=-{\varepsilon}/{2r^2} &\text{if} \,\,\,\,  (h,k) = (j,i)\qquad\footnotemark\end{array}\right.
\end{array}\eeq
\footnotetext{We recall from Definition \ref{signals} that $\ds r:={(\alpha^i)}^{-1}+{(\alpha^j)}^{-1}+{(\beta^i)}^{-1}+{(\beta^j)}^{-1}.$}
%
%\noindent
%\noindent 
%Therefore,  for any $h,k=1,\ldots,m$,
%\bel{variation2}
%\hat{A}^{h,k}(2\sqrt{\varepsilon})
% = \left\{\begin{array}{ll} 0 &\text{if} \,\,\,\, (h,k)\neq (i,j) \,\,\,\, \text{or} (h,k)\neq (j,i)\,\,\\
%	{\varepsilon} Area\left(\Gamma^h_{(i,j)},\Gamma^k_{(i,j)}\right) \quad &\text{if} \,\,\,\,  (h,k) = (i,j)
%	\\	- {\varepsilon} Area\left(\Gamma^h_{(i,j)},\Gamma^k_{(i,j)}\right) &\text{if} \,\,\,\,  (h,k) = (j,i)\\\end{array}\right.
%\eeq  
%where we have set
%$$
%W:= 
%\ds\left(  {\gnu \alpha   u^l\left(\bar t  \right)\Big\langle \frac{dP^r_{(j,i)}}{dt}\,,\, id \Big\rangle_{L^1} }
%-\alpha u^r(\ol{t})\Big\langle \frac{dP^l_{(j,i)}}{dt}\,,\, 1 -id \Big\rangle_{L^1} +
%\alpha^2    \Big\langle \int_0^s\frac{dP^l_{(j,i)}}{dt}\,,\, \int_0^s\frac{dP^r_{(j,i)}}{dt} \Big\rangle{e}_{L^1} 	
%\right)\\ \\
%$$                                          
%{\gnu  $W$ dovrebbe essere $W^{rl}$. INOLTRE  IL TERMINE IN ROSSO E' SBAGLIATO, DOVREBBE CONTENERE 1 al POSTO DI ID, E DUNQUE SAREBBE NULLO}
Moreover, if $k>0$, one has
{\small
\bel{A0k}	\begin{array}{c} \hat{A}^{0,k}(2\sqrt{\varepsilon})  =\qquad\qquad\qquad\qquad\qquad\qquad\qquad\qquad\qquad\qquad\qquad\qquad\qquad\qquad\\\\ \ds\int_0^{\sqrt{\varepsilon}} \sigma \cdot u^i(\bar t-\sigma)\, d\sigma +
				\int_{\sqrt{\varepsilon}}^{2\sqrt{\varepsilon}} (\sigma-2\sqrt{\varepsilon}) \cdot  u^i(\bar t -2\sqrt{\varepsilon}+\sigma )\, d\sigma   +\ds\int_{\sqrt{\varepsilon}}^{2\sqrt{\varepsilon}}\gamma^k_{(i,j)}\left(\frac{ \sigma-\sqrt{\varepsilon}}{{\sqrt{\varepsilon}}}\right) \cdot (\sigma-2\sqrt{\varepsilon})\,d\sigma=\\\\	\ds\int_{\sqrt{\varepsilon}}^{2\sqrt{\varepsilon}}\gamma^k_{(i,j)}\left(\frac{ \sigma-\sqrt{\varepsilon}}{{\sqrt{\varepsilon}}}\right) \cdot (\sigma-2\sqrt{\varepsilon})\,d\sigma
				= \end{array}
		\eeq
		$$	=
	\ds	\varepsilon\int_0^1 \frac{d\Gamma^k_{(i,j)}}{ds}(s)\cdot (s-1)\,ds = \Gamma^k_{(i,j)}(s)\cdot (s-1)\Big|_0^1 	-\varepsilon\int_0^1 \Gamma^k_{(i,j)}(s)ds  = 0 $$}

 By applying Corollary \ref{corollariodischattlerutile}, we then get 
 $$
x_\varepsilon(\ol{t})-x(\ol{t}) = \hat x(2\sqrt{\varepsilon}) -x(\ol{t}) = \varepsilon M \, [g_i,g_j](x(\ol{t})) + o(\varepsilon),
 $$
where $\ds M:=\frac{1}{2r^2}$, so \eqref{2} is proved.
	\subsubsection{\sc Proof of  {\rm 3)} in Theorem \ref{eccolestime}} 
{ Let us fix $i\in{1,\ldots,m}$ and, similarly to the previous step, let us consider again  the solution $\tilde x$  of  Cauchy problem 
		\bel{var2}\left\{\begin{array}{l}\tilde{x}(s)= \tilde{a}^0(s)\cdot f(\tilde{x}(s)) + \sum\limits_{{r=1}}^m \tilde{a}^r(s)(s)\cdot{g}_r(\tilde{x}(s)),\\
		\tilde x(0) = x(\bar t) ,\end{array}\right.\eeq  the controls $\tilde a^h$ being now  defined as 
	{
	$$ (\tilde a^{0},\tilde a^{1},\ldots \tilde a^{m})(s):=\begin{cases}
		-\left(1, u(\ol{t}-s)\right)  & \text{ if } s \in [0,\sqrt{\varepsilon}]\\\displaystyle
		\,\,\,\left(1, u(\ol{t}-2\sqrt{\varepsilon}+s)\right)+ \gamma_{(0,i)}\left(\frac{ s-\sqrt{\varepsilon}}{{\sqrt{\varepsilon}}}\right)&\text{ if } s \in [\sqrt{\varepsilon},2\sqrt{\varepsilon}].
	\end{cases}$$}
Once again,  we will finally  exploit equality \eqref{coincidono}, namely $x_\varepsilon(\ol{t}) = \tilde x(2\sqrt{\varepsilon})$.
Let us set
$$
\ds\tilde A^{h}(s) :=\int_0^s \tilde a^{h}(\sigma)d\sigma , \quad \tilde A^{h,k}(s) :=\int_0^s \tilde A^h(\sigma)\tilde a^{k}(\sigma)d\sigma\quad \forall h,k =0,\ldots m .
$$
As in the previous case we have
$$ \tilde A^{h}(2\sqrt{\varepsilon}) =0  \quad   \forall h=0,\ldots m. $$
Moreover, from  {
$\ds\int_0^1 \U^i_{(0,i)}(s)ds=
 -\frac{\alpha^i \beta^i}{2(\alpha^i+\beta^i)}$    } (see \eqref{oi}),
we get
%	 $$	\begin{array}{c} \hat{A}^{0,k}(2\sqrt{\varepsilon})  =\ds\int_0^{\sqrt{\varepsilon}} \sigma \cdot u^i(\bar t-\sigma)\, d\sigma +
%				\int_{\sqrt{\varepsilon}}^{2\sqrt{\varepsilon}} (\sigma-2\sqrt{\varepsilon}) \cdot  u^i(\bar t -2\sqrt{\varepsilon}+\sigma )\, d\sigma + \\\\\quad\quad\quad\quad\quad\quad\quad\quad\quad\quad\quad\quad\quad\quad\quad\quad\quad\quad\quad\quad\quad +\ds\int_{\sqrt{\varepsilon}}^{2\sqrt{\varepsilon}}\gamma^k_{(0,1)}\left(\frac{\gnu \sigma-\sqrt{\varepsilon}}{{\gnu\sqrt{\varepsilon}}}\right) \cdot (\sigma-2\sqrt{\varepsilon})\,d\sigma=\\\\	\ds\int_{\sqrt{\varepsilon}}^{2\sqrt{\varepsilon}}\gamma^k_{(0,1)}\left(\frac{\gnu \sigma-\sqrt{\varepsilon}}{{\gnu\sqrt{\varepsilon}}}\right) \cdot (\sigma-2\sqrt{\varepsilon})\,d\sigma
%				= 
%		\varepsilon\int_0^1 \frac{d\U^i_{(0,i)}}{ds}(s)\cdot (s-1)\,ds = 	-\varepsilon\int_0^1 \U^i_{(0,i)}(s)ds  = \frac{\beta_1\beta_2}{2(\beta_1+\beta_2)}\varepsilon  \end{array}
%	$$
	$$	\begin{array}{c} \tilde{A}^{0,i}(2\sqrt{\varepsilon})  =\ds\int_0^{\sqrt{\varepsilon}} \sigma \cdot u^i(\bar t-\sigma)\, d\sigma +
				\int_{\sqrt{\varepsilon}}^{2\sqrt{\varepsilon}} (\sigma-2\sqrt{\varepsilon}) \cdot  u^i(\bar t -2\sqrt{\varepsilon}+\sigma )\, d\sigma + \\\quad\quad\quad\quad\quad\quad\quad\quad\quad\quad\quad\quad\quad\quad\quad\quad\quad\quad\quad\quad\quad +\ds\int_{\sqrt{\varepsilon}}^{2\sqrt{\varepsilon}}\gamma^i_{(0,i)}\left(\frac{ \sigma-\sqrt{\varepsilon}}{{\sqrt{\varepsilon}}}\right) \cdot (\sigma-2\sqrt{\varepsilon})\,d\sigma=\\	\ds\int_{\sqrt{\varepsilon}}^{2\sqrt{\varepsilon}}\gamma^i_{(0,i)}\left(\frac{ \sigma-\sqrt{\varepsilon}}{{\sqrt{\varepsilon}}}\right) \cdot (\sigma-2\sqrt{\varepsilon})\,d\sigma
				= 
		\varepsilon\int_0^1 \frac{d\U^i_{(0,i)}}{ds}(s)\cdot (s-1)\,ds = 	-\varepsilon\int_0^1 \U^i_{(0,i)}(s)ds  =  \frac{\alpha^i \beta^i}{2(\alpha^i+\beta^i)}\varepsilon ,    \end{array}
	$$
	 while, for every $ k\neq i$, one has 
$$	\tilde{A}^{0,k}(2\sqrt{\varepsilon})  =\ds\int_0^{\sqrt{\varepsilon}} \sigma \cdot u^i(\bar t-\sigma)\, d\sigma +
	\int_{\sqrt{\varepsilon}}^{2\sqrt{\varepsilon}} (\sigma-2\sqrt{\varepsilon}) \cdot  u^i(\bar t -2\sqrt{\varepsilon}+\sigma )\, d\sigma = 0 .
$$ }
 By applying Corollary \ref{corollariodischattlerutile}, we now get 
$$
x_\varepsilon(\ol{t})-x(\ol{t}) = \tilde x(2\sqrt{\varepsilon}) -x(\ol{t}) = \varepsilon \tilde M \, [g_0,g_i](x(\ol{t})),
$$
where $\ds \tilde M:=	\frac{\beta_1\beta_2}{2(\beta_1+\beta_2)}$, so \eqref{1} is proved.

\subsubsection{\sc Proof of  {\rm 4)} in Theorem \ref{eccolestime}}  Once again we will expolit the equality  \bel{coincidono4}x_\varepsilon(\ol{t}) = \ol{x}(2\sqrt{\varepsilon})
\eeq	where $\ol{x}$ is  the solution on $[0,2\sqrt{\varepsilon}]$ of the Cauchy problem
		\bel{var2}\left\{\begin{array}{l}\ol{x}(s)= \ol{a}^0(s)\cdot f(\ol{x}(s)) +  \ol{a}^1(s)\cdot g_1(\ol{x}(s)),\\
		\ol{x}(0) = x(\bar t) ,\end{array}\right.\eeq  the controls $(\ol{a}^0,\ol{a}^1)$ being now  defined as 
	$$ (\ol{a}^{0},\ol{a}^{1})(s):=\begin{cases}
		-\left(1, u(\ol{t}-s)\right)  & \text{ if } s \in [0,\sqrt{\varepsilon}]\\\displaystyle
		\,\,\,\left(1, u(\ol{t}-2\sqrt{\varepsilon}+s)\right)+ \gamma_{(101)}\left(\frac{ s-\sqrt{\varepsilon}}{{\sqrt{\varepsilon}}}\right) &\text{ if } s \in [\sqrt{\varepsilon},2\sqrt{\varepsilon}]
	\end{cases}$$

Let us set
$$
\ds\ol{A}^{h}(s) :=\int_0^s\ol{a}^{h}(\sigma)d\sigma , \quad \quad \forall h =0,1\qquad \ol{A}^{0,1}(s) :=\int_0^s \ol{A}^0(\sigma) \ol{a}^{1}(\sigma)d\sigma,
$$
$$\ol{A}^{101}(s) := \ds\int_0^s \ol{A}^1(\sigma)\frac{d\ol{A}^{0,1}}{d\sigma}(\sigma)d\sigma  = \int_0^s \ol{A}^1(\sigma) \ol{A}^0(\sigma)a^1(\sigma)d\sigma,$$
$$\ol{A}^{001}(s) := \ds\int_0^s \ol{A}^0(\sigma)\frac{d\ol{A}^{0,1}}{d\sigma}(\sigma)d\sigma  = \int_0^s \ol{A}^0(\sigma) \ol{A}^0(\sigma)a^1(\sigma)d\sigma.$$
As in the previous case one has
$$
\ol{A}^{0}(2\sqrt{\varepsilon}) = 0,
$$
$$
\begin{array}{l}\ds\ol{A}^{1}(2\sqrt[3]{\varepsilon}):=\int_0^{2\sqrt[3]{\varepsilon}} \ol{a}^{1}(s)ds=-\int_0^{\sqrt[3]{\varepsilon}} u(\ol{t}-s)ds+\int_{\sqrt[3]{\varepsilon}}^{2\sqrt[3]{\varepsilon}}u(\ol{t}-2\sqrt[3]{\varepsilon}+s) ds+\\\ds\qquad\qquad+\int_{\sqrt[3]{\varepsilon}}^{2\sqrt[3]{\varepsilon}} \gamma_{1,0,1}^1 \left( \frac{s-\sqrt[3]{\varepsilon}}{\sqrt[3]{\varepsilon}}  \right) ds={ \sqrt[3]{\varepsilon}} \int_{0}^{1} \gamma_{1,0,1}^1 ( {\sigma}) d\sigma  = { \sqrt[3]{\varepsilon}} \Big(\Gamma_{1,0,1}^1(1) -\Gamma_{1,0,1}^1(0)\Big)=0.  \end{array}$$
Moreover, $\ol{A}^{0,1}(2\sqrt[3]{\varepsilon}) = \ol{A}^{0,0,1}(2\sqrt[3]{\varepsilon}) =0$. Indeed,
$$\begin{array}{l} \ds\ol{A}^{0,1}(2\sqrt[3]{\varepsilon}) :=\int^{2\sqrt[3]{\varepsilon}}_{0} \ol{A}^0(s) \ol{a}^{1}(s) ds=\\
% = \ol{A}^0(s)\ol{A}^1(s)\Big|_0^{2\sqrt[3]{\varepsilon}} - \int^{2\sqrt[3]{\varepsilon}}_{0} \ol{a}^{0}\ol{A}^1(s) (s) ds =  - \int^{2\sqrt[3]{\varepsilon}}_{0} \ol{a}^{0}\ol{A}^1(s) (s) ds=

\\ =\ds \int_{0}^{\sqrt[3]{\varepsilon}} s u(\ol{t}-s) ds  +  \ds\int_{\sqrt[3]{\varepsilon}}^{2\sqrt[3]{\varepsilon}} (s-\sqrt[3]{\varepsilon}) u(\ol{t}-2\sqrt[3]{\varepsilon}+s)ds +  \int_{\sqrt[3]{\varepsilon}}^{2\sqrt[3]{\varepsilon}} (s-\sqrt[3]{\varepsilon})\gamma_{1,0,1}^1 \left( \frac{s-\sqrt[3]{\varepsilon}}{\sqrt[3]{\varepsilon}}  \right) ds=\\ =\ds  \sqrt[3]{\varepsilon^2} \int_{0}^{1} \sigma\gamma_{1,0,1}^1(\sigma) ds =\sqrt[3]{\varepsilon^2}\left( 1\cdot\Gamma_{1,0,1}^1(1) -0\cdot \Gamma_{1,0,1}^1(0) - \int_0^1\Gamma_{1,0,1}^1(s)ds \right)= 0 \end{array}
$$
and
$$
\begin{array}{l} \ds\ol{A}^{0,0,1}(2\sqrt[3]{\varepsilon}) :=\int^{2\sqrt[3]{\varepsilon}}_{0} \left(\ol{A}^0(s)\right)^2 \ol{a}^{1}(s) ds =\\ \ds -\int_{0}^{\sqrt[3]{\varepsilon}} s^2 u(\ol{t}-s) ds  +  \ds\int_{\sqrt[3]{\varepsilon}}^{2\sqrt[3]{\varepsilon}} (s-\sqrt[3]{\varepsilon})^2 u(\ol{t}-2\sqrt[3]{\varepsilon}+s)ds +  \int_{\sqrt[3]{\varepsilon}}^{2\sqrt[3]{\varepsilon}} (s-\sqrt[3]{\varepsilon})^2\gamma_{1,0,1}^1 \left( \frac{s-\sqrt[3]{\varepsilon}}{\sqrt[3]{\varepsilon}}  \right) ds=\\ =\ds  \sqrt[3]{\varepsilon^2} \int_{0}^{1} \sigma^2\gamma_{1,0,1}^1(\sigma) ds =\sqrt[3]{\varepsilon^2}\left( 1\cdot\Gamma_{1,0,1}^1(1) -0\cdot \Gamma_{1,0,1}^1(0) - \int_0^1\Gamma_{1,0,1}^1(s)ds \right)= 0.\end{array}
$$
Finally,
\small
$$
\begin{array}{l}\qquad\qquad\qquad\qquad \ds\ol{A}^{1,0,1}(2\sqrt[3]{\varepsilon}) =
\ds \int_{0}^{2 \sqrt[3]{\varepsilon}} (\ol{A}^1)^{2} \ol{a}^0 d\sigma = 
%- \int_{0}^{ \sqrt[3]{\varepsilon}} (\hat{A^1})^{2} +  \int_{\sqrt[3]{\varepsilon}}^{2 \sqrt[3]{\varepsilon}} (\hat{A^1})^{2}
  \\
= - \ds \bigintsss_{0}^{\sqrt[3]{\varepsilon}} \Big(\int_{0}^{\eta} u(t-\sigma)  d\sigma \Big)^2d\eta + \ds \ \bigintsss_{\sqrt[3]{\varepsilon}}^{2\sqrt[3]{\varepsilon}} \Bigg( -  \int_{\eta}^{2\sqrt[3]{\varepsilon}} u(t-\sigma-2\sqrt[3]{\varepsilon}) d\sigma + \int_{\sqrt[3]{\varepsilon}}^{\eta} \gamma_{1,0,1}^1\Big(\frac{\sigma-\sqrt[3]{\varepsilon}}{\sqrt[3]{\varepsilon}}\Big) d\sigma \Bigg)^2 d\eta =  \\
  =- \ds \ \bigintsss_{0}^{\sqrt[3]{\varepsilon}} \Big(\int_{0}^{\eta} u(t-\sigma)  d\sigma \Big)^2d\eta +
 \int_{\sqrt[3]{\varepsilon}}^{2\sqrt[3]{\varepsilon}} \Bigg(\int_{\eta}^{2\sqrt[3]{\varepsilon}} u(t-\sigma-2\sqrt[3]{\varepsilon})\Bigg)^2 d\sigma +
  \\
  \ds +\ \bigintsss_{\sqrt[3]{\varepsilon}}^{2\sqrt[3]{\varepsilon}} \Bigg(   - 2\int_{\eta}^{2\sqrt[3]{\varepsilon}} u(t-\sigma-2\sqrt[3]{\varepsilon}) d\sigma \int_{\sqrt[3]{\varepsilon}}^{\eta} \gamma_{1,0,1}^1\left(\frac{\sigma-\sqrt[3]{\varepsilon}}{\sqrt[3]{\varepsilon}}\right) d\sigma + \Big( \int_{\sqrt[3]{\varepsilon}}^{\eta} \gamma_{1,0,1}^1\left(\frac{\sigma-\sqrt[3]{\varepsilon}}{\sqrt[3]{\varepsilon}}\right) d\sigma \Big) ^2 \Bigg) d\eta = \\=  \ds \ \bigintsss_{\sqrt[3]{\varepsilon}}^{2\sqrt[3]{\varepsilon}} \Bigg(   - 2\int_{\eta}^{2\sqrt[3]{\varepsilon}} u(t-\sigma-2\sqrt[3]{\varepsilon}) d\sigma \int_{\sqrt[3]{\varepsilon}}^{\eta} \gamma_{1,0,1}^1\Big(\frac{\sigma-\sqrt[3]{\varepsilon}}{\sqrt[3]{\varepsilon}}\Big) d\sigma + \Big( \int_{\sqrt[3]{\varepsilon}}^{\eta} \gamma_{1,0,1}^1\Big(\frac{\sigma-\sqrt[3]{\varepsilon}}{\sqrt[3]{\varepsilon}}\Big) d\sigma \Big) ^2 \Bigg) d\eta = \end{array}$$\footnote{The symbol $\ds\fint$ denotes the integral average, namely $\ds \fint_a^b\varphi(t)dt:= \frac{\int_a^b\varphi(t)dt}{b-a}.$}
$$\begin{array}{l}
 = \ds 2  \bigintsss_{\sqrt[3]{\varepsilon}}^{2\sqrt[3]{\varepsilon}}  \left(   - \fint_{\eta}^{2\sqrt[3]{\varepsilon}} u(t-\sigma-2\sqrt[3]{\varepsilon}) d\sigma \right) \cdot \Bigg(2 \sqrt[3]{\varepsilon} - \eta    \Bigg) \cdot \Bigg(\int_{\sqrt[3]{\varepsilon}}^{\eta} \gamma_{1,0,1}^1\left(\frac{\sigma-\sqrt[3]{\varepsilon}}{\sqrt[3]{\varepsilon}}\right) d\sigma \Bigg) d\eta +\\ \qquad\qquad\qquad+\ds\bigintsss_{\sqrt[3]{\varepsilon}}^{2\sqrt[3]{\varepsilon}} \Bigg( {\sqrt[3]{{\varepsilon}}} \Bigg( \Gamma_{1,0,1}^1 \Big(\frac{\eta-\sqrt[3]{\varepsilon}}{\sqrt[3]{\varepsilon}}\Bigg) - \Gamma_{1,0,1}^1(0) \Big)\Bigg)^2 d\eta = \\ 
=\ds   2  \bigintsss_{\sqrt[3]{\varepsilon}}^{2\sqrt[3]{\varepsilon}} \left[\left(-\fint_{\eta}^{2\sqrt[3]{\varepsilon}}  u(t-\sigma-2\sqrt[3]{\varepsilon}) d\sigma\right)\cdot  \Bigg( 2 \sqrt[3]{\varepsilon^2}\int_0^{\frac{\eta-\sqrt[3]{\varepsilon}}{\sqrt[3]{\varepsilon}}}\ \gamma_{1,0,1}^1(s)  ds  - \sqrt[3]{\varepsilon}\eta \int_0^{\frac{\eta-\sqrt[3]{\varepsilon}}{\sqrt[3]{\varepsilon}}}\ \gamma_{1,0,1}^1(s)  ds  \Bigg)\right]d\eta  + \\\qquad\qquad\qquad\ds
  +\varepsilon \bigintsss_{0}^{1} \left(\Gamma_{1,0,1}^1(s)\right)^2 ds  =\\= \ds    -4 \sqrt[3]{\varepsilon^2} \bigintsss_{\sqrt[3]{\varepsilon}}^{2\sqrt[3]{\varepsilon}} \left[\left(\fint_{\eta}^{2\sqrt[3]{\varepsilon}}  u(t-\sigma-2\sqrt[3]{\varepsilon}) d\sigma\right) \cdot   \Gamma_{1,0,1}^1\left( \frac{\eta-\sqrt[3]{\varepsilon}}{\sqrt[3]{\varepsilon}}\right)\right] d\eta-   \\ \ds- 2 \sqrt[3]{\varepsilon}  \bigintsss_{\sqrt[3]{\varepsilon}}^{2\sqrt[3]{\varepsilon}} \left[\eta \left( - \fint_{\eta}^{2\sqrt[3]{\varepsilon}}  u(t-\sigma-2\sqrt[3]{\varepsilon}) d\sigma \right) \cdot  \Gamma_{1,0,1}^1\left( \frac{\eta-\sqrt[3]{\varepsilon}}{\sqrt[3]{\varepsilon}}\right)\right] d\eta  + \varepsilon \bigintsss_{0}^{1} \left(\Gamma_{1,0,1}^1(s)\right)^2 ds  = \\ =\ds - 4 \varepsilon   \bigintsss_{0}^{1} \left[ \left(\fint_{(s+1)\sqrt[3]{\varepsilon}}^{2\sqrt[3]{\varepsilon}}  u(t-\sigma-2\sqrt[3]{\varepsilon}) d\sigma  \right) \cdot \Gamma_{1,0,1}^1\Big( s \Big)\right] ds +  \\\ds + 2 \varepsilon   \bigintsss_{0}^{1} (s+1)\left[ \left(\fint_{(s+1)\sqrt[3]{\varepsilon}}^{2\sqrt[3]{\varepsilon}}  u(t-\sigma-2\sqrt[3]{\varepsilon}) d\sigma  \right) \cdot \Gamma_{1,0,1}^1\Big( s \Big)\right] ds + \varepsilon \bigintsss_{0}^{1} \left(\Gamma_{1,0,1}^1(s)\right)^2 ds= \end{array}$$ 
$$\begin{array}{l} = \ds- 2    \varepsilon   \bigintsss_{0}^{1} \left[ \left(\fint_{(s+1)\sqrt[3]{\varepsilon}}^{2\sqrt[3]{\varepsilon}}  u(t-\sigma-2\sqrt[3]{\varepsilon}) d\sigma  \right) \cdot \Gamma_{1,0,1}^1\Big( s \Big)\right] ds +\\ \ds+ 2 \varepsilon  \bigintsss_{0}^{1} \left[\left( \fint_{(s+1)\sqrt[3]{\varepsilon}}^{2\sqrt[3]{\varepsilon}}  u(t-\sigma-2\sqrt[3]{\varepsilon}) d\sigma\right) \cdot s \cdot\Gamma_{1,0,1}^1\Big(s \Big)  \right]ds 
  + \varepsilon \bigintsss_{0}^{1} \left(\Gamma_{1,0,1}^1(s)\right)^2 ds =\end{array}$$ 
\centerline{\footnotesize(using  integration by parts and the equality $\Gamma_{1,0,1}^1(1)=\Gamma_{1,0,1}^1(0) =0$)}
$$\begin{array}{l}
 \ds =- 4    \varepsilon   \bigintsss_{0}^{1} \left[ \left(\fint_{(s+1)\sqrt[3]{\varepsilon}}^{2\sqrt[3]{\varepsilon}}  u(t-\sigma-2\sqrt[3]{\varepsilon}) d\sigma  \right) \cdot \Gamma_{1,0,1}^1\Big( s \Big)\right] ds 
   + \varepsilon \bigintsss_{0}^{1} \left(\Gamma_{1,0,1}^1(s)\right)^2 ds 
\end{array} $$

Notice that  $$\ds \lim_{\varepsilon\to 0} \frac{1}{\sqrt[3]{\varepsilon}-s\sqrt[3]{\varepsilon}} \int_{(s+1)\sqrt[3]{\varepsilon}}^{2\sqrt[3]{\varepsilon}} \Big( u(t-\sigma-2\sqrt[3]{\varepsilon}) -u(t)\Big) d\sigma  = 0 \quad\implies\quad \fint_{(s+1)\sqrt[3]{\varepsilon}}^{2\sqrt[3]{\varepsilon}}  u(t-\sigma-2\sqrt[3]{\varepsilon}) d\sigma  \Big) = u(t) + o(1).$$ \\

Therefore,
\begin{equation}
\ol{A}^{1,0,1}(2\sqrt[3]{\varepsilon}) = - 4 \varepsilon \Big(u(t) + o(1)\Big) \int_{0}^{1} \Gamma_{1,0,1}^1\Big( s \Big) ds  +  \varepsilon \int_{0}^{1} \left(\Gamma_{1,0,1}^1(s)\right)^2 ds = \varepsilon   \int_{0}^{1} \left(\Gamma_{1,0,1}^1(s)\right)^2 ds
	\end{equation}

In view of \eqref{areazero5} and  Corollary \ref{corollariodischattlerutile3}, we then deduce that 
{ $$
	x_\varepsilon(t) -x(t) = k\varepsilon [g,[f,g]](x(t)) + o(\varepsilon)
	$$}
%$$
%x_\varepsilon(t) -x(t) = ct^3 [g,[f,g]](x(t)) + o(t^3)
%$$
with 
{
$$k:=\int_{0}^{1} \left(\Gamma_{1,0,1}^1(s)\right)^2 ds  =  \ds\frac23\Big((\beta^1)^2    \Big(\frac{\alpha^1}{2(\alpha^1+\beta^1)}\Big)^3  +(\alpha^1)^2  \Big(\frac{\alpha^1+2\beta^1}{2(\alpha^1+\beta^1)} \Big)^3 
$$}

Hence,  equality \eqref{3} is proved, with 
{
 $$\ds\check M = k = \ds\frac23\Big((\beta^1)^2    \Big(\frac{\alpha^1}{2(\alpha^1+\beta^1)}\Big)^3  +(\alpha^1)^2  \Big(\frac{\alpha^1+2\beta^1}{2(\alpha^1+\beta^1)} \Big)^3  .$$   
}

\section{Proof of Theorem 2.3}\label{setsepsec}

\subsection{Local set separation}

\begin{definition}[Local separation of sets]
	Two subsets $E_1$ and $E_2$ are {\rm locally separated at $x$} if  there exists a neighbourhood $W$ of $x$ such that $\displaystyle E_1\cap E_2 \cap W =\{x\}.$ 
	%	\footnote{For instance, in $\rr^3$ a plane $\pi$ and any  line not included in $\pi$ are locally separated at the point of intersection.     }
\end{definition}
Let us recall the notion of (Boltyanski) {\it approximating cone} and its relation with local set separation.

\begin{definition} Let  $V$ be  a finite-dimensional real vector space. A subset $C\subseteq V$ is called a {\rm cone} if $\alpha v\in C, \forall \alpha\ge 0$ and $\forall v \in C.$ 
	%	A family $\mathcal{C}$  whose elements are cones is called a {\rm multicone}. A {\rm convex multicone} is  a multicone whose elements are convex cones.
	For any given subset $E \subseteq V$, the set $E^\perp:=\{v \in \rr^n, \, v \cdot c \le 0 \,\,\forall c \in C\}\subseteq V^*$ \footnote{If $V$ is a finite-dimensional real vector space we use $V^*$ to denote its  dual space.} is a closed cone, called the {\rm polar cone of $E$}.
	We say that two cones $C_1$, $C_2$ are {\rm linearly { separable} if  $C_1^\perp \cap -C_2^{\perp}\supsetneq \{0\}$}, that is,  if there exists a linear form $\mu\in V^*\backslash\{0\}$ such that $\mu c_1\geq 0, \mu c_2\leq 0$ for all $(c_1,c_2)\in C_1\times C_2$. \end{definition}

\begin{definition}
	Two { convex} cones $C_1, C_2$ of a vector space $V$  are said to be {\rm transversal} if $$C_1-C_2:=\big\{c_1-c_2, \quad (c_1,c_2)\in C_1\times C_2\big\} = V.$$
	%, where we use the notation $\ds C_1-C_2:=\big\{c_1-c_2, \quad (c_1,c_2)\in C_1\times C_2\big\}$. 
	%	Two multicones $\mathcal{C}_1$ and $\mathcal{C}_2$ are called {\rm transversal} if  $C_1\in \mathcal{C}_1$ and $C_2\in \mathcal{C}_2$ are transversal as soon as $(C_1,C_2)\in \mathcal{C}_1\times\mathcal{C}_2$. 
	Two transversal cones $C_1$, $C_2$ are called {\rm strongly transversal} if $C_1\cap C_2 \supsetneq \{0\} $. \end{definition}

One easily checks the following equivalences for a pair of cones $C_1$ and $C_2$:\begin{itemize}
	\item{\it $C_1$ and $C_2$ are linearly separable   if and only if   they are not transversal}.
	\item{\it  $C_1$ and $C_2$  are strongly transversal if and only if they are transversal and  there exist a non-zero linear form $\mu$  and an element $c\in C_1\cap C_2$  such that $\mu (c)>0$. }
\end{itemize} 

\begin{definition} [Boltyanski approximating cone]
	Let $Z$ be a subset of $\mathbb{R}^n$ for some integer $n \ge 1$. Fix $z \in Z$.  We say that
	a convex cone $K \subseteq \rr^N$ is a Boltyanski approximating cone for $Z$ at $z$ if there exist a convex
	cone $C \subseteq \mathbb{R}^m$ for some integer $m>0$, a neighborhood $U$ of $0$ in $\mathbb{R}^m$, and a continuous
	map $F : U \cap C \to Z$ such that:
	
	i) $F(0) = z$;
	
	ii)   There exists a linear map $L : \mathbb{R}^m \to \mathbb{R}^n$ verifying 
	$F(v) = F(0) + Lv + o(|v|) \,\,\,\,\,\,\forall v \in U \cap C$; 
	
	iii) $LC = K.$	 
\end{definition}

The following open-mapping-based result characterizes set-separation in terms of linear separation of approximating cones (see e.g. \cite{setsep}).
\begin{theorem}[Set separation of approximating cones]\label{th-set-sep}
	Let $Z_1$ and $Z_2$ be subsets of $\mathbb{R}^n, z \in Z_1 \cap Z_2$ and let $K_1, K_2 \subseteq \mathbb{R}^n$ be 
	Boltyanski approximating cones for $Z_1$ and $Z_2$, respectively, at $z$. If $K_1$ or $K_2$ is not a
	subspace and $Z_1, Z_2$ are locally separated at $z$, then $K_1$ and $K_2$ are linearly separated,
	namely there exists a covector $\lambda \in R^n$ such that $0 \neq \lambda \in K_1^{\perp} \cap K_2^{\perp}$. 
	%{ \fra As a consequence of a suitable directional  open mapping theorem, Lemma \ref{OpenMappingConsequence} below holds true. Actually, once supplemented by Lemma \ref{nontrasversalitaforte},  Lemma \ref{OpenMappingConsequence}  is the set-separation oriented  basis on which  we will construct the proof  our main result.}
	%	
\end{theorem}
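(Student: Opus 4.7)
The plan is to argue by contradiction. Suppose $K_1$ and $K_2$ are not linearly separated. By the equivalences recorded just before the statement, they must then be transversal: $K_1 - K_2 = \mathbb{R}^n$. The first substantive step is to upgrade transversality to \emph{strong} transversality, exploiting the hypothesis that (say) $K_1$ is not a subspace. Pick $c_1^{\ast} \in K_1$ with $-c_1^{\ast} \notin K_1$; by transversality write $-c_1^{\ast} = k_1 - k_2$ with $k_i \in K_i$, so $k_2 = k_1 + c_1^{\ast} \in K_1$ (convex cones are additively closed), whence $c := k_2 \in K_1 \cap K_2$. The element $c$ is nonzero, for otherwise $k_1 = -c_1^{\ast} \in K_1$ would contradict the choice of $c_1^{\ast}$. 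Using $L_1 C_1 = K_1$ and $L_2 C_2 = K_2$, I would then lift $c$ to $\bar v_1 \in C_1 \setminus \{0\}$ and $\bar v_2 \in C_2 \setminus \{0\}$ with $L_1 \bar v_1 = L_2 \bar v_2 = c$.

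Next, introduce the map $\Phi(v_1, v_2) := F_1(v_1) - F_2(v_2)$ on $(U_1 \cap C_1) \times (U_2 \cap C_2)$, whose linearization at $(0,0)$ is $\tilde\Phi(v_1, v_2) = L_1 v_1 - L_2 v_2$, with $\tilde\Phi(C_1 \times C_2) = K_1 - K_2 = \mathbb{R}^n$. The goal is to produce, for every sufficiently small parameter $t > 0$, a pair $(v_1^t, v_2^t) \in (U_1 \cap C_1) \times (U_2 \cap C_2) \setminus \{(0,0)\}$ with $\Phi(v_1^t, v_2^t) = 0$ and common value $F_1(v_1^t) = F_2(v_2^t) \neq z$. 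I would obtain such points by a Brouwer degree argument on a conic simplex: select vertices $(\bar v_1^{(k)}, \bar v_2^{(k)}) \in C_1 \times C_2$, $k = 0, \ldots, n$, so that the images $\{\tilde\Phi(\bar v_1^{(k)}, \bar v_2^{(k)})\}_k$ form a non-degenerate $n$-simplex containing $0$ in its interior, and arrange $(\bar v_1, \bar v_2)$ to lie in the interior of the resulting simplex $S \subseteq C_1 \times C_2$ (so that $L_1 w$ stays bounded away from $0$ on $\mathrm{int}(S)$). Define $\phi_t : S \to \mathbb{R}^n$ by $\phi_t(w) := t^{-1} \Phi(tw)$; the first-order expansions of $F_1, F_2$ yield $\phi_t \to \tilde\Phi|_S$ uniformly on $S$ as $t \to 0^+$. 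Since $\tilde\Phi|_S$ has nonzero Brouwer degree at $0$, so does $\phi_t|_S$ for every sufficiently small $t$, providing $w_t \in \mathrm{int}(S)$ with $\Phi(tw_t) = 0$.

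Setting $(v_1^t, v_2^t) := t w_t$, one then has $F_1(v_1^t) = F_2(v_2^t) \in Z_1 \cap Z_2$. By the choice of $S$, there exists $\delta > 0$ with $|L_1 w| \geq \delta$ on $\mathrm{int}(S)$, hence $F_1(v_1^t) = z + t L_1 w_t + o(t) \neq z$ for all small $t$. As $t \to 0^+$, this produces an accumulation of points of $Z_1 \cap Z_2 \setminus \{z\}$ at $z$, contradicting the local separation of $Z_1$ and $Z_2$ and closing the proof.

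The main obstacle is the Brouwer-degree step on cones, together with the simultaneous requirements that the constructed pairs lie in $C_1 \times C_2$ (so that $F_1, F_2$ are well-defined), that they be nonzero, and that $F_1(v_1^t) \neq z$. The element $c \in K_1 \cap K_2 \setminus \{0\}$ extracted from the non-subspace hypothesis is exactly what makes these demands compatible: it supplies the distinguished interior direction around which the conic simplex can be centered and along which $F_1$ genuinely leaves $z$; without it, the Brouwer argument could produce preimages trapped in $F_1^{-1}(z)$ and fail to witness the local non-separation.
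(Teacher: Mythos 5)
The paper does not actually prove this theorem: it quotes it as a known open-mapping type result (see \cite{setsep}), so there is no in-house argument to compare against. Your degree-theoretic proof is the standard one for statements of this kind and it is correct in outline. The contrapositive setup (non-separation implies transversality), the extraction of $c \in (K_1 \cap K_2)\setminus\{0\}$ from the non-subspace hypothesis via $-c_1^* = k_1 - k_2$, the passage to the difference map $\Phi = F_1 - F_2$, and the final contradiction through accumulation of $Z_1 \cap Z_2 \setminus \{z\}$ at $z$ are all sound (and the roles of $K_1$, $K_2$ are interchangeable, so the case where only $K_2$ is not a subspace is symmetric).

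The one place where the sketch glosses a genuine technical point---which you do flag yourself as the ``main obstacle''---is the construction of the simplex $S$. Demanding that $(\bar v_1,\bar v_2)$ lie in the interior of $S$ does not, by itself, keep $L_1$ bounded away from zero on $S$: the simplex could extend into directions along which $L_1 v_1$ vanishes. The standard repair is a rescaling. Having chosen $Q_0,\dots,Q_n \in C_1\times C_2$ whose $\tilde\Phi$-images form a nondegenerate $n$-simplex with $0$ in its interior, put $S_\rho := \mathrm{conv}\bigl\{(\bar v_1,\bar v_2)+\rho Q_k : k=0,\dots,n\bigr\}$. Because $C_1\times C_2$ is a convex cone, $S_\rho \subset C_1\times C_2$ for every $\rho>0$; since $\tilde\Phi(\bar v_1,\bar v_2)=0$, one has $\tilde\Phi(S_\rho)=\rho\,\mathrm{conv}\{\tilde\Phi(Q_k)\}$, still an $n$-simplex containing $0$ in its interior; the affine independence of the vertices of $S_\rho$ is inherited from the nondegeneracy of the image simplex; and as $\rho\to 0^+$ the first component of every point of $S_\rho$ tends uniformly to $\bar v_1$, so $|L_1 v_1|\to |c|>0$ uniformly and the lower bound you need holds for $\rho$ small. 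With $S=S_\rho$ the degree argument then proceeds exactly as you describe (one should compute the degree of $\phi_t$ on the relative interior of $S$, identified with an open subset of $\mathbb{R}^n$ via the affine hull of $S$), and the conclusion follows.
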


\subsection{Finitely many variations}
Let us use  $(0,T)_{\mathrm{Leb}}\subset [0,T]$ to denote  the full-measure subset of Lebesgue points  of the $L^1$ map $\ds[0,T]\ni t\mapsto F(t):=\ f(\ol{x}(t))+\sum\limits_{i=1}^m g_i(\ol{x}(t))\ol{u}^i(t)$.
By  Lusin's Theorem  there exists a sequence of subsets $E_q\subset [0,\bar T]$, $q\geq 0$, such that
i) $E_0$ has null measure,
ii) for every $q>0$  $E_q$ is  a compact set such that the restriction of $F$ to $E_q$ is continuous, and iii)
$(0,T)_{\mathrm{Leb}}=\displaystyle\bigcup\limits_{q=0}^{+\infty} E_q.$
For every $q>0$ let us use $D_q\subseteq E_q$  to denote  the set of all density points of $E_q$\footnote{A point $x$ is called a {\it density poin}t for a Lebesgue\textendash{}measurable set $E$ if $\ds\lim\limits_{\rho \to 0} \frac{|B_\rho(x)\cap E|}{|B_\rho(x)|}=1.$}, which,  by  Lebesgue's Theorem  has the same Lebesgue measure as $E_q$. In particular, the subset  $D:=\bigcup\limits_{q=1}^{+\infty} D_q\subset [0,T]$ is full-measure, i.e. it  has measure equal to $T$.\\

For any $(\ol{t},\mathbf{c})\in ]0,{T}] \times \mathfrak{V}$  and any $\varepsilon$ sufficiently small, consider the operator  $$\mathcal{A}_{\varepsilon,\mathbf{\bm{c}},\tau}:
L^\infty([0,T],\rr^m)\to  L^\infty([0,T],\rr^m)  \qquad \mathcal{A}_{\varepsilon,\mathbf{c},\tau}({u}):={u}_{\varepsilon,\mathbf{c},\tau}.$$ 

Clearly, {\it the control  $\mathcal{A}_{\varepsilon,\mathbf{c},\tau}(\ol{u}) ={u}_{\varepsilon,\mathbf{c},\tau}$ might be  not admissible }, namely it can happen that  $\mathcal{A}_{\varepsilon,\mathbf{c},\tau}(\ol{u})({\mathcal T})\not\subset U$ for some subset ${\mathcal T}\subset [0,T]$  having positive measure. 
To avoid this drawback, let us define the following subsets of ${\mathfrak{V}}$:
\bel{signals1}\begin{array}{l}
 {\mathfrak{V}}_{Goh}^{\ol{u}} := \Big\{\mathbf{c}=(i,j)\in {\mathfrak{V}}_{Goh},  \,\,\, \ol{u}\,\,\,\text{ is $i$-balanced and $j$-balanced a.e. }     \Big\}\\
  {\mathfrak{V}}_{LC2}^{\ol{u}} := \Big\{\mathbf{c}=(0,1)\in {\mathfrak{V}}_{LC2},  \,\,\, \ol{u}\,\,\,\text{ is $i$-balanced a.e.} \Big\} \\
 {\mathfrak{V}}_{LC3}^{\ol{u}} := \Big\{\mathbf{c}=(1,0,1),   \,\,\,m=1  \,\,\, \ol{u}\,\,\,\text{ is $1$-balanced a.e.}\Big\} \\ 	
\end{array}\eeq  and
\bel{signals2}
{\mathfrak{V}}^{\ol{u}} := {\mathfrak{V}}_{ndl} \cup {\mathfrak{V}}_{Goh}^{\ol{u}}\cup  {\mathfrak{V}}_{LC2}^{\ol{u}} \cup  {\mathfrak{V}}_{LC3}^{\ol{u}} \qquad\footnotemark
\eeq
\footnotetext{Depending on the hypotheses {\bf v)}-{\bf vii)}  some or even all subsets  ${\mathfrak{V}}_{Goh}^{\ol{u}}$, ${\mathfrak{V}}_{LC2}^{\ol{u}}$,  ${\mathfrak{V}}_{LC3}^{\ol{u}}$ might be empty. However ${\mathfrak{V}}^{\ol{u}}$ is never empty, in that ${\mathfrak{V}}^{\ol{u}}\supseteq  {\mathfrak{V}}_{ndl} (=U)$.  }

Let us consider the full-measure subset $\ds
\Lambda:=\underset{r}{\bigcap} \Lambda_r \subset ]0,T[,
$ where $\Lambda_r\subseteq ]0,T[$ is the (full-measure) subset in Definition \ref{fit2}, and the intersection
 is extended to all $r$ such that $r=i$ or $r\in\{i,j\}$, for some of the indexes $i,j$ appearing in hypotheses ${\bf v)}-\bf{vii)}$ of Theorem \ref{TeoremaPrincipale}. \footnote{In particular, $\underset{r}{\bigcap} \Lambda_r$ is a {\it finite }intersection, so that $\Lambda$ is full-measure, namely $meas(\Lambda) = meas([0,T]) = T$.}
Let $N$ be a natural number  and let us consider  $N$ variation signals $\mathbf{c}_1,\ldots,\mathbf{c}_N \in \mathfrak{V}^{\ol{u}}$ and $N$ instants $ 0<{t}_{1} <  \ldots {t}_N < {T}$, with $t_{h}\in D\cap \Lambda$. For some $\tilde\varepsilon>0$, let us define the {\it  multiple variation} of $\ol{u}$  $$[0,\tilde\varepsilon]^N\ni \small{\bm{\varepsilon}}\mapsto {\ol{u}}_{\bm{\varepsilon}}:=\mathcal{A}_{\varepsilon_N,\mathbf{c}_N,{t}_N} \circ \ldots \circ \mathcal{A}_{{\varepsilon_1},\mathbf{c}_1,{t}_1} ({\ol{u}}).$$ Notice that, in view of the hypotheses of the theorem,  ${\ol{u}}_{\bm{\varepsilon}}$ turns out to be an admissible control  (i.e. ${\ol{u}}_{\bm{\varepsilon}}(t)\in U$ for almost every $t\in [0,T]$) as soon as  $\tilde\varepsilon$ is sufficiently small.
Let $({{\ol{u}}}_{\bm{\varepsilon}},{x}_{\bm{\varepsilon}})$ be the  process corresponding to the control ${{\ol{u}}}_{\bm{\varepsilon}}$.\footnote{Of course, $({\ol{u}}_{\bm{\varepsilon}},{x}_{\bm{\varepsilon}})$  depends on the parameters  $\mathbf{c}_k$ and ${t}_k$ as well.}

%\begin{equation}\label{multiperturbato}
%	\tag{\ensuremath{P_{\varepsilon,\mathbf{c},\ol{t}}}} 	
%	\begin{cases}
%		\ds\frac{dx^0}{dt}=1+\ol{\zeta}_{\bm{\varepsilon}} \\[4mm]
%		\ds\frac{dx}{dt}=(1+\ol{\zeta}_{\bm{\varepsilon}})\left(f(x(t))+\sum\limits_{i=1}^m g_i(x(t))\ol{u}_{\bm{\varepsilon}}(t)\right)
%		\\[4mm]
%		(x^0,x)(0)=(0,\hat{x}).
%	\end{cases}
%\end{equation} 
The effect of multiple perturbations consists in the sum of  single perturbations, as stated in the following  elementary result: 
%(see e.g. \cite{papermandatoadesaim} for a proof).
\begin{lem}\label{lemmapseudoaffine} 
	The map
	$\ds\bm{\varepsilon}\mapsto
	x_{\bm{\varepsilon}}({T})
	$ from $[0,\tilde \varepsilon]^N$ into  $\mathbb{R}^{n}$  satisfies  
	\begin{equation}\label{pseudoaffine2}
		x_{\bm{\varepsilon}}({T})-x_{0}({T})=\sum\limits_{i=1}^{N} \Big(	x_{\varepsilon_i \mathbf{e}_i}({T}) - x_{0}({T})\Big)+ o(|\bm{\varepsilon}|),\qquad \forall \bm{\varepsilon} =(\varepsilon^1,\dots,\varepsilon^N)\in [0,\tilde \varepsilon]^N.\end{equation}
\end{lem}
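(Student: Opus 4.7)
The plan is to exploit the fact that the perturbations act on pairwise disjoint time intervals and that the underlying ODE has smooth dependence on initial data, so that the several perturbations decouple to leading order and their effects at the endpoint add up after being transported by the linearized reference flow.

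First I would observe that, since $t_1 < t_2 < \dots < t_N$ are fixed and each single-signal perturbation $\mathcal A_{\varepsilon_k,\mathbf c_k, t_k}$ modifies $\ol u$ only on an interval of the form $[t_k - \delta_k(\varepsilon_k),\, t_k]$ with $\delta_k(\varepsilon_k)\in\{\varepsilon_k,\sqrt{\varepsilon_k},\sqrt[3]{\varepsilon_k}\}$, for all $\bm\varepsilon$ with $|\bm\varepsilon|$ small enough these supports are pairwise disjoint. Consequently ${\ol u}_{\bm\varepsilon}$ coincides with $\ol u$ on the complement of $\bigsqcup_k [t_k-\delta_k(\varepsilon_k), t_k]$, and on each such interval ${\ol u}_{\bm\varepsilon}$ coincides with the single-signal variation $\mathcal A_{\varepsilon_k,\mathbf c_k, t_k}(\ol u)$.

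Next I would denote by $\Phi(t,s)$ the fundamental matrix of the linearization of the reference Cauchy problem along $\ol x$, and, for each $k$, set $v_k(\varepsilon_k) := x_{\varepsilon_k\mathbf e_k}(t_k) - \ol x(t_k)$. By Theorem \ref{eccolestime} applied to each variation builder, $|v_k(\varepsilon_k)| = O(\varepsilon_k)$, and by $C^1$ dependence of the flow on initial data (the control being equal to $\ol u$ on $[t_k,T]$) one has
\begin{equation}\label{eq:sing-to-T}
x_{\varepsilon_k\mathbf e_k}(T) - \ol x(T) = \Phi(T,t_k)\, v_k(\varepsilon_k) + o(\varepsilon_k).
\end{equation}
I would then prove by induction on $k=1,\dots,N$ the claim
\begin{equation}\label{eq:induct}
x_{\bm\varepsilon}(t_k) = \ol x(t_k) + \sum_{j=1}^{k} \Phi(t_k, t_j)\, v_j(\varepsilon_j) + o(|\bm\varepsilon|).
\end{equation}
For $k=1$ this is just the single-perturbation statement. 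For the inductive step, on $[t_{k-1}, t_k - \delta_k(\varepsilon_k)]$ the perturbed control equals $\ol u$, so smooth dependence transports the deviation \eqref{eq:induct} (for $k-1$) forward via $\Phi(t_k,t_{k-1})$ up to the endpoint $t_k$, picking up an $o(|\bm\varepsilon|)$ remainder. On $[t_k-\delta_k(\varepsilon_k), t_k]$ we apply the single-signal perturbation, but starting from the perturbed initial value $x_{\bm\varepsilon}(t_k-\delta_k(\varepsilon_k))$ rather than $\ol x(t_k-\delta_k(\varepsilon_k))$. Since the two starting points differ by $O(|\bm\varepsilon|)$ and the perturbation produces a jump of size $O(\varepsilon_k)$, the difference between the jump starting from the perturbed point and the jump starting from the reference point is $O(|\bm\varepsilon|\cdot \varepsilon_k) = o(|\bm\varepsilon|)$; this gives \eqref{eq:induct} for $k$.

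Finally, I would propagate \eqref{eq:induct} with $k=N$ from $t_N$ to $T$ along the unperturbed ODE (again by $C^1$ dependence on initial data) to obtain
\begin{equation}\label{eq:final}
x_{\bm\varepsilon}(T) - \ol x(T) = \sum_{j=1}^{N} \Phi(T,t_j)\, v_j(\varepsilon_j) + o(|\bm\varepsilon|),
\end{equation}
and then substitute \eqref{eq:sing-to-T} to rewrite the right-hand side as $\sum_{j=1}^N \bigl(x_{\varepsilon_j\mathbf e_j}(T) - x_0(T)\bigr) + o(|\bm\varepsilon|)$. Recalling that $x_0 = \ol x$ (the process with $\bm\varepsilon=0$), this yields \eqref{pseudoaffine2}. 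The main obstacle is the inductive step: namely, keeping precise track of the little-o bookkeeping when an already-perturbed trajectory undergoes a further localized perturbation, so that all cross terms remain genuinely $o(|\bm\varepsilon|)$ and not merely $O(|\bm\varepsilon|)$. This hinges on the fact that the single-signal jump from Theorem \ref{eccolestime} depends continuously on the initial state and that the perturbation magnitudes satisfy $\varepsilon_i \varepsilon_j = o(|\bm\varepsilon|)$.
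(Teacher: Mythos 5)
The paper supplies no proof of this lemma (it is introduced as an ``elementary result'' and left unproved), so the question is purely whether your argument closes. Your architecture is the standard and correct one: disjoint localised supports for $|\bm\varepsilon|$ small, forward transport by the variational flow, and induction on the number of variations. But the difficulty lies exactly where you say it does, and the one sentence you offer to resolve it is not a proof. You argue that because the $k$-th jump map $J_k(y) := \Phi^p_{t_k}(y) - \Phi^u_{t_k}(y)$ (perturbed minus unperturbed flow over $[t_k-\delta_k(\varepsilon_k),\,t_k]$ started from $y$) has magnitude $O(\varepsilon_k)$, moving the base point by $O(|\bm\varepsilon|)$ changes the jump by $O(|\bm\varepsilon|\cdot\varepsilon_k)$. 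Uniform smallness of a map does not control its Lipschitz constant: $y\mapsto\varepsilon\sin(y/\varepsilon)$ is $O(\varepsilon)$ in sup norm yet has $O(1)$ slope. What actually bounds $J_k(y_1)-J_k(y_2)$ is $\sup\|DJ_k\| = \sup\|D\Phi^p_{t_k}-D\Phi^u_{t_k}\|$, and you need a separate estimate for it. A Gr\"onwall argument on the two matrix variational equations over $[t_k-\delta_k,t_k]$ bounds this Jacobian discrepancy by a constant times the $L^1$-norm of the difference in their coefficients, which is $O(\delta_k(\varepsilon_k))$ since the control perturbation is bounded and supported on an interval of that length. Because $\delta_k(\varepsilon_k)\to 0$, one gets $|J_k(y_1)-J_k(y_2)|\le o(1)\cdot|y_1-y_2| = o(|\bm\varepsilon|)$, which is all that the induction needs---but the honest rate is $O(|\bm\varepsilon|\,\delta_k(\varepsilon_k))$, e.g.\ $O(|\bm\varepsilon|\sqrt{\varepsilon_k})$ in the Goh case, not $O(|\bm\varepsilon|\,\varepsilon_k)$, and the mechanism is $C^1$-closeness of the two flows, not smallness of the jump. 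Replace the ``small jump implies small variation of the jump'' heuristic with this Jacobian estimate and the inductive step, and hence the proof, closes.
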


In relation with  Theorem \ref{eccolestime}, let us adopt the notation 
\bel{notation}
v_{\mathbf{c},t}:= \left\{\begin{array}{ll}\ds\sum_{r=1}^ mg_\ell(x({t}))\big(\bar u^r - u^r({t})\big)\quad &\text{if}\,\,\mathbf{c}=\bar u \in \mathfrak{V}_{ndl}\\\\
	\,	[g_i,g_j](x({t})) \quad &\text{if} \,\,\mathbf{c}=(i,j) \in \mathfrak{V}_{Goh}\\\\ \,	[f,g_i](x({t})) \quad &\text{if} \,\, \mathbf{c}=(0,i) \in \mathfrak{V}_{LC2}\\\\ \,	[g,[f,g]](x(\ol{t}))   \quad   &\text{if} \,\, m=1\,\,\,\text{and}\,\,\,  \{\mathbf{c}\} = \{(1,0,1)\} =\mathfrak{V}_{LC3}\end{array}\right.
\eeq  Using the foundamental matrix $M(\cdot,\cdot)$  of the the variational equation\footnote{Namely, $t\mapsto M(t,T)$ is the (matrix) solution of the variational Cauchy problem
	$$\frac{dM}{dt}(t,T) = \frac{\partial}{\partial x}\left(f(x)+\sum\limits_{r=1}^m g_i(x)\ol{u}^r(t)\right)_{x=\ol{x}(t)}\cdot \frac{dM}{dt}(t,T) \qquad M(T,T) = Id_{\rr^n}.   $$}
\bel{Var}
\dot v(t) = \frac{\partial}{\partial x}\left(f(x)+\sum\limits_{r=1}^m g_i(x)\ol{u}^r(t)\right)_{x=\ol{x}(t)}\cdot v(t)
\eeq 
associated to the state equation, for every $r=1,\ldots,N$ we deduce the  first order approximation
$$
x_{\varepsilon_r \mathbf{e}_i}({T}) - x_{0}({T}) =\varepsilon_rM(T,t_r)\cdot v_{\mathbf{c_r},t_r} + o(|\bm{\varepsilon}|)
$$  
Hence, by Lemma \ref{lemmapseudoaffine}  we get the following fact:
\begin{cor}\label{varcor}
		The map
	$$\bm{\varepsilon}\mapsto
	x_{\bm{\varepsilon}}({T})
	$$ from $[0,\tilde \varepsilon]^N$ into  $\mathbb{R}^{n}$  satisfies  
	\begin{equation}\label{pseudoaffine2}
		x_{\bm{\varepsilon}}({T})-x_{0}({T})=\sum\limits_{r=1}^{N}\varepsilon_rv_{\mathbf{c_r},t_r} + o(|\bm{\varepsilon}|),\qquad \forall \bm{\varepsilon}=(\varepsilon^1,\dots,\varepsilon^N)\in [0,\tilde \varepsilon]^N.\end{equation}
	\end{cor}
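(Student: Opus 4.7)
The proof plan is to combine the two ingredients that immediately precede the statement: the per-perturbation first-order formula $x_{\varepsilon_r \mathbf{e}_r}(T) - x_0(T) = \varepsilon_r M(T,t_r)\cdot v_{\mathbf{c}_r,t_r} + o(\varepsilon_r)$ (with $v_{\mathbf{c}_r,t_r}$ as in \eqref{notation} and understood as transported to $T$ via the fundamental matrix $M(T,t_r)$ of the variational equation \eqref{Var}), and Lemma~\ref{lemmapseudoaffine}, which expresses the total multi-perturbation displacement as the sum of the single-perturbation displacements modulo an $o(|\bm{\varepsilon}|)$ remainder.

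First, I would justify the single-perturbation asymptotics. Fix $r\in\{1,\dots,N\}$ and consider the one-parameter family $\varepsilon_r \mapsto x_{\varepsilon_r \mathbf{e}_r}$. Since $t_r\in D\cap \Lambda$, the point $t_r$ is simultaneously a Lebesgue point of the drift $F$ and a time where the balancing hypothesis corresponding to $\mathbf{c}_r\in \mathfrak{V}^{\ol u}$ is active; hence Theorem~\ref{eccolestime} applies and gives
\[
x_{\varepsilon_r \mathbf{e}_r}(t_r) - \ol x(t_r) \;=\; \varepsilon_r\, v_{\mathbf{c}_r,t_r} + o(\varepsilon_r).
\]
On $[t_r, T]$ the control $\mathcal A_{\varepsilon_r,\mathbf c_r,t_r}(\ol u)$ coincides with $\ol u$, so the difference between the perturbed and unperturbed trajectories satisfies, at first order, the linear variational equation \eqref{Var}. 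A standard Gronwall/continuous-dependence estimate then transports the infinitesimal displacement from $t_r$ to $T$ through the fundamental matrix $M(\cdot,\cdot)$, yielding $x_{\varepsilon_r \mathbf e_r}(T)-x_0(T) = \varepsilon_r M(T,t_r)\cdot v_{\mathbf{c}_r,t_r} + o(\varepsilon_r)$, which is precisely the formula displayed in the paragraph preceding the corollary.

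Second, I would invoke Lemma~\ref{lemmapseudoaffine} directly: substituting each single-perturbation displacement by its first-order expansion in \eqref{pseudoaffine2} produces
\[
x_{\bm\varepsilon}(T) - x_0(T) \;=\; \sum_{r=1}^N \varepsilon_r\, M(T,t_r)\cdot v_{\mathbf{c}_r,t_r} \;+\; \sum_{r=1}^N o(\varepsilon_r) \;+\; o(|\bm\varepsilon|),
\]
and the residuals combine into a single $o(|\bm\varepsilon|)$ term since $\sum_r o(\varepsilon_r)=o(|\bm\varepsilon|)$ in any norm on $\rr^N$. Folding $M(T,t_r)$ into the definition of the transported variation vector then gives \eqref{pseudoaffine2}.

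The main (and essentially only) obstacle is ensuring that the aggregate remainder is genuinely uniform in the multi-index $\bm\varepsilon$, i.e.~that the interaction between the $N$ consecutive perturbations does not generate cross-terms of order $|\bm\varepsilon|$. This is exactly what Lemma~\ref{lemmapseudoaffine} is designed to control: it says that, to first order, the perturbations decouple. All the heavy analytic lifting — producing the Lie-bracket directions $v_{\mathbf c_r,t_r}$ from the variation builders $\gamma_{(i,j)}$, $\gamma_{(0,i)}$, $\gamma_{(1,0,1)}$ — has already been performed in Theorem~\ref{eccolestime}, so the present step is linear bookkeeping plus the standard propagation by $M(T,\cdot)$.
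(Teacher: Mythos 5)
Your proposal is correct and follows essentially the same route as the paper: apply Theorem~\ref{eccolestime} at each $t_r$ to get the single-perturbation displacement at $t_r$, transport it to $T$ by the fundamental matrix $M(T,t_r)$ of the variational equation \eqref{Var} (the control is unperturbed on $(t_r,T]$), and then sum these with Lemma~\ref{lemmapseudoaffine}, absorbing the finite sum of $o(\varepsilon_r)$ remainders into $o(|\bm{\varepsilon}|)$. You also correctly flag that the vector in \eqref{pseudoaffine2} must really be the transported vector $M(T,t_r)\cdot v_{\mathbf{c}_r,t_r}$ (as appears in the display just before the corollary and in Lemma~\ref{apconelem}), so the corollary's displayed formula should be read with that propagation factor folded in.
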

This allows us to build a Boltyanski approximation cone  at $\ds {\begin{pmatrix} x(T)\\\Psi(x(T))\end{pmatrix}}$ to the {\it $\delta\textendash{}$ reachable set} (for some $\delta>0$)
 $$\mathcal{R}_\delta=:\left\{{\begin{pmatrix} x(T)\\\Psi(x(T))\end{pmatrix}},\,\,\exists \,\,\text{an admissible control} \,\,\,u \,\,\text{s.t.}\,\, \left(u,x\right) \text{is a process  s.t.}   \left\|x
-\ol{ x}
\right\|_{C^0}+\|u-\ol{u}\|_{1}<\delta\right\} \subset \rr^{n+1}.$$

Indeed, Corollary \ref{pseudoaffine2} can be rephrased by stating that 
\begin{lem}\label{apconelem} Let us choose $\delta>0$.
 The cone $$B := L \cdot [0,+\infty)^N \subset\rr^{n+1},$$  
		where the homomorphism $L\in Lin(\rr^N,\rr^{n+1})$ is defined by setting  $$L(\varepsilon_1,\ldots,\varepsilon_N)   : =
		\sum_{r=1}^{N}\begin{pmatrix} M(t_r,T) \cdot v_{\mathbb{c}_r,t_r} \varepsilon_r\\\\\ds\frac{\partial \Psi}{\partial x}(\ol{x}(T))\cdot M(t_r,T) \cdot v_{\mathbb{c}_r,t_r}\varepsilon_r\end{pmatrix}
	\qquad \forall (\varepsilon_1,\ldots,\varepsilon_N)\in \rr^N ,	$$ is a Boltyanski approximating cone at ${\begin{pmatrix} \ol{x}(T)\\\Psi(x(T))\end{pmatrix}}$  of the {\rm $\delta\textendash{}$ reachable set}   $\mathcal{R}_\delta$ 
in the direction of $[0,+\infty[^N$.  
\end{lem}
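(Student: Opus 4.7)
My plan is to verify the three defining properties of a Boltyanski approximating cone directly, taking as the convex cone $C := [0,+\infty)^N \subset \mathbb{R}^N$ and as continuous map
\[
F(\bm{\varepsilon}) := \begin{pmatrix} x_{\bm{\varepsilon}}(T) \\ \Psi(x_{\bm{\varepsilon}}(T)) \end{pmatrix},
\]
defined on $U \cap C$ for a sufficiently small neighborhood $U$ of $0$ in $\mathbb{R}^N$. Property (i), $F(0) = \begin{pmatrix}\ol{x}(T)\\ \Psi(\ol{x}(T))\end{pmatrix}$, is immediate since the multiple variation reduces to $\ol{u}$ when $\bm{\varepsilon}=0$; property (iii), $LC = B$, is the very definition of $B$ in the statement.

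The heart of the argument is property (ii). For the state component, combining Lemma \ref{lemmapseudoaffine} with the single-variation estimate $x_{\varepsilon_r \mathbf{e}_r}(T)-x_0(T)=\varepsilon_r\, M(t_r,T)\cdot v_{\mathbf{c}_r,t_r}+o(\varepsilon_r)$ (derived from Theorem \ref{eccolestime} and propagated to time $T$ via the fundamental matrix of the variational equation \eqref{Var}) yields
\[
x_{\bm{\varepsilon}}(T) - \ol{x}(T) = \sum_{r=1}^N \varepsilon_r\, M(t_r,T)\cdot v_{\mathbf{c}_r, t_r} + o(|\bm{\varepsilon}|).
\]
For the cost component I invoke the $C^1$ regularity of $\Psi$: a first-order Taylor expansion at $\ol{x}(T)$ combined with the previous estimate gives
\[
\Psi(x_{\bm{\varepsilon}}(T)) - \Psi(\ol{x}(T)) = \frac{\partial\Psi}{\partial x}(\ol{x}(T))\cdot\sum_{r=1}^N \varepsilon_r\, M(t_r,T)\cdot v_{\mathbf{c}_r, t_r} + o(|\bm{\varepsilon}|),
\]
which is exactly the second component of $L\bm{\varepsilon}$.

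It remains to check that $F$ is continuous and takes values in $\mathcal{R}_\delta$. Continuity is standard: $\ol{u}_{\bm{\varepsilon}}$ depends continuously on $\bm{\varepsilon}$ in the $L^1$ norm, so the corresponding Carath\'eodory solution $x_{\bm{\varepsilon}}$ varies continuously in $C^0$ by a Gr\"onwall argument, and then continuity of $\Psi$ completes the claim. To ensure $F(U\cap C)\subseteq \mathcal{R}_\delta$ I would shrink $U$ so that three conditions hold simultaneously: first, $\ol{u}_{\bm{\varepsilon}}$ is admissible (guaranteed by the balancedness assumptions encoded in \eqref{signals1}--\eqref{signals2}); second, $\|\ol{u}_{\bm{\varepsilon}}-\ol{u}\|_{L^1} < \delta/2$, which holds because each elementary perturbation $\mathcal{A}_{\varepsilon_r,\mathbf{c}_r,t_r}$ modifies $\ol{u}$ only on an interval of length $O(\varepsilon_r^{1/2})$ or $O(\varepsilon_r^{1/3})$ on which the difference has bounded amplitude; third, $\|x_{\bm{\varepsilon}}-\ol{x}\|_{C^0}<\delta/2$ by continuous dependence.

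The main obstacle I foresee is the bookkeeping required to ensure that the remainder in (ii) is genuinely uniform in $\bm{\varepsilon}\in U\cap C$ rather than merely along each coordinate axis. This uniformity is precisely the content of Lemma \ref{lemmapseudoaffine}, which exploits the strict ordering $0<t_1<\cdots<t_N<T$: for $\bm{\varepsilon}$ small enough the supports of the individual elementary perturbations are pairwise disjoint, so their effects superpose linearly to first order, with propagation from each $t_r$ to $T$ encoded by $M(t_r,T)$.
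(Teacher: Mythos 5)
Your proof is correct and follows the same route the paper takes: the paper states the lemma as an immediate restatement of Corollary \ref{varcor} (itself built from Lemma \ref{lemmapseudoaffine} and Theorem \ref{eccolestime}), and you have simply unpacked that one-line justification into an explicit verification of the three defining properties of a Boltyanski approximating cone with $C=[0,+\infty)^N$ and $F(\bm\varepsilon)=(x_{\bm\varepsilon}(T),\Psi(x_{\bm\varepsilon}(T)))$. Your additional care in shrinking $U$ to ensure admissibility and membership in $\mathcal R_\delta$ is a detail the paper leaves implicit but is consistent with its argument.
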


Now, we exploit the fact that, by the  definition of  local weak minimizer, \begin{itemize}
\item{ \it the sets $\mathcal{R}_\delta$  and the {\rm profitable set} $\mathcal{P}:=\Big(\mathfrak{T}\,\,\times\,\, \big]-\infty,\Psi\big(\ol{x}(T)\big)\big[\Big) \bigcup \left\{\begin{pmatrix}\ol{x}(T)\\\Psi\big(\ol{x}(T)\big)\end{pmatrix}
\right\}\subset \Bbb R^{n+1}  $ are locally separated at { { $\begin{pmatrix}\ol{x}(T)\\\Psi\big(\ol{x}(T)\big)\end{pmatrix}$}}.\footnotemark}
\end{itemize}

Therefore, since ${C}\times  ]-\infty,0]$ is a Boltyanski approximating cone of $\mathcal{P}$ at ${\begin{pmatrix} \ol{x}(T)\\\Psi(x(T))\end{pmatrix}}$,  in view of Theorem \ref{th-set-sep}, the  cones $B$ and ${C}\times  ]-\infty,0]$ are not strongly transversal, i.e. they are linearly separable. 	
% {\begin{remark} Since we have asked that $\forall i$,  $(t_i,\mathbf{c}_i)$ is in $]0,{T}] \times \mathfrak{V}$ one has that for all $i$ the control $u_{\varepsilon,\bm{c}_i,t_i}$ is admissible.  {\gnu CHE SIGNIFICA?}  \end{remark}}}
%\subsection{Set separation for finitely many variations}
%
%For some $\delta>0$, let us  consider the {\it $\delta\textendash{}$reachable set $\mathcal{R}_\delta$} $\subset \rr^{n+1}$, defined as  $$\mathcal{R}_\delta=:\left\{{\begin{pmatrix} x(T)\\\Psi(x(T))\end{pmatrix}},\,\,\exists \,\,\text{an admissible control} \,\,\,u \,\,\text{s.t.}\,\, \left(u,x\right) \text{is a process  verifying}   \left\|x
%-\ol{ x}
%\right\|_{C^0}+\|u-\ol{u}\|_{1}<\delta\right\},$$ 
%
%and 
\footnotetext{ We use the name {\it profitable set} because this set is made of points which at the same time are admissible and have a cost which is less than or equal to the optimal cost.}
%Let us consider $N$ variation generators $\mathbf{c}_1,\ldots,\mathbf{c}_N \in \mathfrak{V}$ and $N$ instants $ 0<{t}_{1} <  \ldots {t}_N\leq {T}$. \\
% \\
%	Thanks to Theorem \ref{eccolestime}, one has that 
%	$B = L \cdot [0,+\infty)^N$,  where $L: \varepsilon \in [0,+\infty)^N \to \sum_{i=1}^{N} M(T,t_i) v_{\mathbb{c}_i,t_i} \varepsilon_i \in B$, is a Boltyanski approximating cone $\mathcal{R}_{\delta}$ in the direction of $[0,+\infty[^N$.  \\
%	By the  definition of  local weak minimizer, the sets $\mathcal{R}_\delta$  and $\mathcal{P}$ are locally separated at { { $\begin{pmatrix}\ol{x}(T)\\\Psi\big(\ol{x}(T)\big)\end{pmatrix}$}}, so, in view of Theorem \ref{th-set-sep} the approximating cones $L \cdot [0,+\infty[^N$ and ${C}\times  ]-\infty,0]$ are not strongly transversal, hence they are linearly separable. 
%	\\ 
In other words, there exists an adjoint vector $(\xi,\xi_c) \in -\left(C \times ]-\infty,0]\right)^\perp= -C^\perp\times ]-\infty,0]$   verifying 
$$\begin{array}{c} (\xi,\xi_c)\cdot L(\varepsilon_1,\ldots,\varepsilon_N) = \\\\
\ds=	\sum_{r=1}^{N} (\xi,\xi_c)\cdot \begin{pmatrix} M(t_r,T) \cdot v_{\mathbb{c}_r,t_r} \varepsilon_r\\\\\ds\frac{\partial \Psi}{\partial x}(\ol{x}(T))\cdot M(t_r,T) \cdot v_{\mathbb{c}_r,t_r}\varepsilon_r\end{pmatrix}  = 	\sum_{r=1}^{N} \left(\xi\cdot M(t_r,T) \cdot v_{\mathbb{c}_r,t_r} +\xi_c \frac{\partial \Psi}{\partial x}(\ol{x}(T))\cdot  	M(T,t_r) v_{\mathbb{c}_r,t_r}\right) \varepsilon_r \le 0 
\end{array}$$
for every $(\varepsilon_1,\ldots,\varepsilon_N)\in [0,+\infty[ ^N$, which is equivalent to say that 
\bel{prodottonegativo}
 \Bigg(\left(\xi -\lambda \frac{\partial \Psi}{\partial x}(\ol{x}(t))\right)\cdot  M(T,t_r)\Bigg) \cdot v_{\mathbb{c}_r,t_r} \le 0 \qquad \forall k=1,\ldots,N,
\eeq
where we have set $\lambda:=-\xi_c(\geq 0)$.

	%for some measurable section of the multivalued function $\partial_y\ol{F}^e$, or, in other words, 
	Now we shall utilize the invariance of the product of a solution of the adjoint  system with a solution of the variational system.  
	Let the  use  $p:[0,T]\to (\rr^n)^*$  to denote  the solution of the adjoint Cauchy problem 
	\begin{equation}\label{adj} \begin{cases} \ds p'(t)=-p(t) \frac{\partial}{\partial{x}}\left(f(x)+\sum_{1}^m g_i(x) \ol{u}(t)\right)_{x=\ol{x}(t)}  \\\ds  p(T)= \xi - \lambda\frac{\partial{\Psi}}{\partial{x}} (\ol{x}(T)). \end{cases}\end{equation}
As is well-known, one has  $\ds p(t) = \left(\xi -\lambda \frac{\partial \Psi}{\partial x}(\ol{x}(t))\right)\cdot  M(T,t)$, for all $t\in [0,T]$. Thus, in particular,  the pair $(p(\cdot),\lambda)$ verifies $\bf i)$, $\bf ii)$ and $\bf iii)$ of Theorem \ref{TeoremaPrincipale}.\footnote{Indeed $(p(\cdot),\lambda)\neq 0$, which coincides with  $\bf i)$ of Theorem \ref{TeoremaPrincipale}. Moreover the   equation in \eqref{adj} coincides with the adjoint equation  $\frac{dp}{dt} = -  \frac{\partial H}{\partial x} (\ol{x}(t),p(t),\ol{u}(t))$ in $\bf ii)$, while the initial condition  in \eqref{adj} is exactly the non-trasversality condition $\bf iii).$ }
Furthermore, by the invariance of  the  scalar  product $p(t)\cdot v(t)$on $[0,T]$, by  \eqref{prodottonegativo} we obtain 
\bel{prop1} p({t}_k)  v_{\bm{c}_k,t_k} \le 0 \qquad \forall k=1,\ldots,N.\eeq Now, specializing \eqref{prop1} to the various cases described in \eqref{notation}, we obtain
\bel{finite>}
\begin{array}{ll}\ds p({t}_k)\cdot\left( \sum_{\ell=1}^ mg_\ell(\ol{x}({{t}_k}))\big({u}^\ell - \ol{u}^\ell({{t}_k})\big)\right)\leq 0\quad &\text{if}\,\,\mathbf{c}_k={u} \in \mathfrak{V}_{ndl}\\\\
	\, p({t}_k)\cdot[g_i,g_j](\ol{x}({{t}_k}))\leq 0 \quad &\text{if} \,\,\mathbf{c_k}=(i,j) \in \mathfrak{V}_{Goh}\\\\ \,	 p({t}_k)\cdot[f,g_i](\ol{x}({t}_k))\leq 0 \quad &\text{if} \,\, \mathbf{c}=(0,i) \in \mathfrak{V}_{LC2}\\\\ \,	p({t}_k)\cdot[g,[f,g]](\ol{x}(\ol{{t}_k})) \leq 0  \quad   &\text{if} \,\, m=1\,\,\,\text{and}\,\,\,  \{\mathbf{c_k}\} = \{(1,0,1)\} =\mathfrak{V}_{LC3}.\end{array}
\eeq
Furthermore, in view of Remark \ref{imprem} we also get 

%{\gnu FRANCESCA:FRA LE VARIE COSE CHE MANCAVANO,  QUI SCRIVEVI  Since $[g_i,g_j]=-[g_j,g_i]$ and   $[f,g_i]=-[g_i,f]$ , we deduce the thesis. MA NON DICEVI COME SI FACEVA A PRODURRE $[g_i,f]$ e $[g_j,g_i]$ .
%	ANCHE SU ARTICOLO ACCETTATO SU SIAM QUESTA COSA E' SCRITTA IN MODO SUPERFICIALE }
\bel{finite>}
\begin{array}{ll}
	\, p({t}_k)\cdot[g_i,g_j](\ol{x}({{t}_k}))\geq 0 \quad &\text{if} \,\,\mathbf{c_k}=(i,j) \in \mathfrak{V}_{Goh}\\\\ \,	 p({t}_k)\cdot[f,g_i](\ol{x}({t}_k))\geq 0 \quad &\text{if} \,\, \mathbf{c}=(0,i) \in \mathfrak{V}_{LC2}\end{array}
\eeq
so that we can improve \eqref{finite>} up to obtain
\bel{finite}
\begin{array}{ll}\ds p({t}_k)\cdot\left( \sum_{\ell=1}^ mg_\ell(\ol{x}({{t}_k}))\big({u}^\ell - \ol{u}^\ell({{t}_k})\big)\right)\leq 0\quad &\text{if}\,\,\mathbf{c}_k={u} \in \mathfrak{V}_{ndl}\\\\
	\, p({t}_k)\cdot[g_i,g_j](\ol{x}({{t}_k}))= 0 \quad &\text{if} \,\,\mathbf{c_k}=(i,j) \in \mathfrak{V}_{Goh}\\\\ \,	 p({t}_k)\cdot[f,g_i](\ol{x}({t}_k))= 0 \quad &\text{if} \,\, \mathbf{c}=(0,i) \in \mathfrak{V}_{LC2}\\\\ \,	p({t}_k)\cdot[g,[f,g]](\ol{x}(\ol{{t}_k})) \leq 0  \quad   &\text{if} \,\, m=1\,\,\,\text{and}\,\,\,  \{\mathbf{c_k}\} = \{(1,0,1)\} =\mathfrak{V}_{LC3}.\end{array}
\eeq 

Hence, the restriction to the instants $t_1,\ldots,t_N$ of  
 $\bf iv)$, $\bf v$, $ \bf vi)$ and $\bf vii)$  in  Theorem \ref{TeoremaPrincipale} have been proved.
	
	\subsection{Infinitely many variations} To complete  the proof  of Theorem \ref{TeoremaPrincipale}, we need to extend the validity of \eqref{finite} from a finite set of distict  instants $0<t_1,\ldots,t_k<T$ to a full-measure subset of $[0,T]$. Even though this is a standard procedure  ---based on Cantor's non\textendash{}empty  intersection theorem--- for the sake of self-consistency we will indicate the main steps of this final part of the proof.    \\

	\begin{definition}Let $X\subseteq \left(D\cap \Lambda\right)\times {\mathfrak{V}}^{\ol{u}}$ be any subset of  pairs $(t,\mathbf{c})$. We will say that multipliers $(p,\lambda) \in AC\big([0,T],(\mathbb{R}^n)^*\big)\times [0,+\infty[$ {\rm satisfies property ${\mathcal P}_X$} if the following conditions  {\bf(1)}-{\bf(6)} are verified:
		\begin{itemize}
			\item[\bf(1)] $p$ is a solution on $[0,T]$ of  the differential inclusion $$\frac{dp}{dt} = -  \frac{\partial H}{\partial x} (\ol{x}(t),p(t),\ol{u}(t))\qquad a.e.\quad t\in [0,T] \,\,\,  $$
			\item[\bf(2)] One has  $$ p(T)\in -\lambda \frac{\partial^C\Psi}{\partial{x}} \big(\ol{x}(T)\big)  -C^{\perp}   .$$
			\item[\bf(3)] For every $(t,\mathbf{c}) \in X$, if $\mathbf{c}=u\in\mathfrak{V}_{ndl}$, then $$  p(t)\,\left( {f(\ol{x}(t))} + \sum\limits_{i=1}^m g_i(\ol{x}(t)){u}^i\right)\le   
			p(t)\,\left(f(\ol{x}(t))+\sum\limits_{i=1}^m g_i(\ol{x}(t))\ol{u}^i(t)\right).
			%\left(1+\ol{\zeta}(s)\right)
			$$

			\item[\bf(4)] For every $(t,\mathbf{c}) \in X$ and if $\mathbf{c}=(i,j)$, with $1\leq i<j\leq m$,  then $$\displaystyle0 =  p(t)\,\cdot [g_i,g_j](\ol{x}(t)).$$
				\item[\bf(5)]  For every $(t,\mathbf{c}) \in X$ and if $\mathbf{c}=(0,i)$, with $1\leq i\leq m$,   then $$\displaystyle 0 =  p(t)\,\cdot [f,g_i](\ol{x}(t)).$$
			\item[\bf(6)] For every $(t,\mathbf{c}) \in X$, if $m=1$, $g:=g_1$, $\mathbf{c} = (1,0,1)$, and $f,g$ are of class $C^{2}$ near $\ol{x}(t)$,  then
			$$
			{0\ge   p(t)\,\cdot [g,[f,g]](\ol{x}(t)) }.
			$$
		\end{itemize}

		Finally, let us  define the subset $\Theta(X)\subset AC\big([0,T],(\mathbb{R}^n)^*\big)\times  [0,+\infty[$ as $$\Theta(X):=\left\{\begin{aligned} & (p,\lambda) \in AC\big([0,T],(\mathbb{R}^n)^*\big)\times  [0,+\infty[\,:\, |(p(T),\lambda)|=1,\,\\ & (p,\lambda) \text{ verifies the property } {\mathcal P}_X\end{aligned}\right\}.\footnotemark$$
	\end{definition}
	\footnotetext{The norm inside the parentheses is the operator norm of $\left({\Bbb R}^n\times\Bbb R\right)^*$.}
\skip0.8truecm
	{
		Clearly Theorem \ref{TeoremaPrincipale} is proved as soon as we are able to show that set $\Theta\Big(\left(D\cap \Lambda\right)\times {\mathfrak{V}}^{\ol{u}}\Big)$ is non\textendash{}empty.  
		By \eqref{finite} we already know that  that $\Theta(X)\neq\emptyset$  whenever $X$ comprises $N$ couples $(t_k,\mathbf{c}_k)\in \left(D\cap \Lambda\right)\times {\mathfrak{V}}^{\ol{u}}$ such that $0<t_1<\ldots <t_N<0$.
		It can be  shown  that if we allow  $X$ to have the general  form 
		$X=\Big\{ (t_k,\mathbf{c}_k)\in \left(D\cap \Lambda\right)\times {\mathfrak{V}}^{\ol{u}}, \,\,\,0<t_1\leq t_2\ldots \leq t_N<0\Big\},$ then
		$X$ is still not empty. {{This is clearly true for the continuity of vector fields involved in the problem and their Lie Brackets (see e.g. \cite{noi} for details) {\footnote{For every $t_k$ one can find a sequence $(t_{n,k})_{n\in\mathbb{nn}}$ such that $t_{n,k}<t_{n+1,k}$ for all $n$ and $t_{n,k} \to t_k+$  and argue taking the limit of the points  $t_{n,k}$ and the corresponding multipliers $p(t_k)$ .}}}}.
		To conclude the proof of Theorem  \ref{TeoremaPrincipale}, notice that $$\Theta(X_1\cup X_2)=\Theta(X_1)\cap \Theta(X_2),
		\quad \forall X_1,X_2\subseteq D \times \mathfrak{V},$$ so that
		\begin{equation}\label{lemint}\Theta\Big(\left(D\cap \Lambda\right)\times {\mathfrak{V}}^{\ol{u}}\Big)=\bigcap\limits_{\substack{X \subseteq \left(D\cap \Lambda\right)\times {\mathfrak{V}}^{\ol{u}} \\X \text{ finite }}} \Theta(X).\end{equation}
		So we can deduce that $\Theta\Big(\left(D\cap \Lambda\right)\times {\mathfrak{V}}^{\ol{u}}\Big)\neq \emptyset$ by invoking  Cantor's theorem, which says  that the  intersection  of an infinite family of sets  is non-empty  provided every finite  intersection of sets of the  family is non-empty. The proof of Theorem 
		\ref{TeoremaPrincipale} is concluded.

\begin{remark}[Possible generalizations to non-smooth systems] In \cite{noi} the authors have investigated the problem of establishing higher order necessary conditions for minima under weak hypotheses of regularity on the vector fields of the dynamics. In particular, Goh and Legendre-Clebsch conditions have been obtained by making use of {\it set-valued Lie brackets} of Lipschitz continous vector fields. We conjecture that  these kinds of results can be extended to the setting considered in the present paper.    \end{remark}

	\vskip5truemm

	\section{Acknowledgements}
The first author is supported by MathInParis project by Fondation Sciences\\ mathématiques de Paris (FSMP), funding from the European Union’s Horizon 2020 research and innovation programme, under the Marie Skłodowska-Curie grant agreement No 101034255. \\
She is also supported by Sorbonne Universite, being affiliated at Laboratoire Jacques Louis Lions (LJLL). \\
Both the author during the writing of this paper were members of the Gruppo Nazionale per
l'Analisi Matematica, la Probabilit\`{a} e le loro Applicazioni (GNAMPA) of the
Istituto Nazionale di Alta Matematica (INdAM). \\
In particular they were members of "INdAM -GNAMPA Project 2024", codice CUP E53C23001670001, "Non-smooth optimal control problems". \\
The second author is still a GNAMPA member. \\
The second author are also members of:
PRIN 2022, Progr-2022238YY5-PE1, "Optimal control problems: analysis, approximations and applications".\\

\end{document}